\documentclass[11pt,a4paper]{article}
\usepackage[T1]{fontenc}
\usepackage{lmodern}
\usepackage{textcomp}
\usepackage{amsmath,amssymb,amsthm,mathtools}
\usepackage[textwidth=440pt,top=27mm,bottom=27mm]{geometry}
\usepackage{microtype}
\usepackage{xcolor}
\usepackage{tikz}
\usepackage{flafter,needspace}
\usepackage[colorlinks=true,linkcolor=blue!45!black,citecolor=blue!45!black,urlcolor=blue!45!black]{hyperref}
\hypersetup{pdftitle={A proof of the Karpelevic theorem via invariant polygons},pdfauthor={Brecht Verbeken and Vincent Ginis}}
\newcommand{\R}{\mathbb R}
\newcommand{\C}{\mathbb C}

\DeclareMathOperator{\conv}{conv}
\DeclareMathOperator{\aff}{aff}
\DeclareMathOperator{\relint}{relint}
\DeclareMathOperator{\ext}{ext}
\DeclareMathOperator{\Arg}{Arg}
\newtheorem{theorem}{Theorem}[section]
\newtheorem{lemma}[theorem]{Lemma}
\newtheorem{proposition}[theorem]{Proposition}
\newtheorem{corollary}[theorem]{Corollary}
\theoremstyle{definition}
\newtheorem{definition}[theorem]{Definition}

\theoremstyle{remark}

\numberwithin{equation}{section}
\begin{document}
\title{A proof of the Karpelevi\v{c} theorem via invariant polygons}
\author{%
  Brecht Verbeken\textsuperscript{1,2} and
  Vincent Ginis\textsuperscript{1,2,3}\\[0.8em]
  \parbox{0.96\textwidth}{\centering\small
    \textsuperscript{1}Department of Business Technology and Operations,\\
    Data Analytics Laboratory, Vrije Universiteit Brussel (VUB),\\
    Pleinlaan 2, 1050 Brussels, Belgium.\\[0.45em]
    \textsuperscript{2}imec-SMIT, Vrije Universiteit Brussel,\\
    Pleinlaan 9, 1050 Brussels, Belgium.\\[0.45em]
    \textsuperscript{3}School of Engineering and Applied Sciences,\\
    Harvard University, Cambridge, Massachusetts 02138, USA.\\[0.6em]
    \href{mailto:brecht.verbeken@vub.be}{brecht.verbeken@vub.be};
    \href{mailto:vincent.ginis@vub.be}{vincent.ginis@vub.be}
  }%
}
\date{}
\maketitle
\begin{abstract}
The Karpelevi\v{c} theorem describes the complex numbers that occur as
eigenvalues of stochastic matrices of a fixed order. We give a
self-contained proof based on invariant polygons. Their geometry yields
a product equation for an eigenvalue of maximal modulus at a fixed
argument. Convexity determines this modulus, explicit stochastic
matrices attain it, and a comparison between Farey intervals completes
the boundary description. As the matrix order increases, the resulting
equation gives a uniform relative asymptotic for the gap $1-|z|$ between
a boundary point $z$ and the unit circle, even arbitrarily close to arc
endpoints. We also determine the optimal contraction factor for a planar
rotation and dilation when vector size is measured by scaling a polygon
with at most $N$ vertices.
\end{abstract}

\noindent\textit{Keywords.} Stochastic matrices; invariant polygons;
Karpelevi\v{c} theorem; Farey sequences; polyhedral Lyapunov functions.\par
\noindent\textit{2020 Mathematics Subject Classification.}
Primary 15A18, 15B51; Secondary 11J04, 52A10, 93D30.

\section{The boundary theorem}
Let $\Theta_n$ be the set of individual eigenvalues of real row-stochastic matrices of order $n$. Let $F_n$ list the reduced fractions in $[0,1]$ with denominator at most $n$, in increasing order, and put $F_n^+=F_n\cap[0,1/2]$. We prove the following form of the Karpelevi\v{c} theorem \cite{Karp,Ito}.

\begin{theorem}[Karpelevi\v{c}, in Ito form]\label{thm:karpelevic}
Let $n\ge4$. Each pair of consecutive fractions $f<g$ in $F_n^+$
determines an upper boundary arc of $\Theta_n$, joining $e^{2\pi if}$
to $e^{2\pi ig}$.

Suppose first that $f$ has the smaller denominator, and write
$f=p/q$ and $g=r/s$. Then $q<s$ and $rq-ps=1$.
Set $m=\lfloor n/q\rfloor$. The arc is the branch of
\begin{equation}\label{eq:main-ito}
 z^s(z^q-\beta)^m=(1-\beta)^m z^{mq},
 \qquad 0\le\beta\le1,
\end{equation}
joining $e^{2\pi ip/q}$ to $e^{2\pi ir/s}$.

Fix an angle $\vartheta$ with $2\pi p/q<\vartheta<2\pi r/s$, and put
\[
 A=q\vartheta-2\pi p,\qquad
 B=\frac{2\pi r-s\vartheta}{m}.
\]
The boundary point at argument $\vartheta$ is $z=\rho e^{i\vartheta}$,
where $\rho$ is the unique solution in $(0,1)$ of
\[
 \rho^{s/m}\sin A+\rho^q\sin B=\sin(A+B).
\]

If $g$ has the smaller denominator, apply this description to the
arc reflected in the real axis: put $(p/q,r/s)=(1-g,1-f)$ and
replace $z$ by $\bar z$. The angle $\vartheta$ is then the argument
of $\bar z$, and $\rho=|\bar z|=|z|$.

These arcs, ordered by increasing argument, form the upper boundary
of $\Theta_n$; reflection in the real axis gives the lower boundary.
The region consists of all line segments joining zero to a boundary point.

The points of $\Theta_n$ on the unit circle are exactly the roots of
unity of orders at most $n$. For the small orders,
\[
 \Theta_1=\{1\},\qquad \Theta_2=[-1,1],\qquad
 \Theta_3=\conv\{1,e^{2\pi i/3},e^{-2\pi i/3}\}\cup[-1,-1/2].
\]
For $n=3$, the upper boundary ends with the vertical segment from
$e^{2\pi i/3}$ to $-1/2$, followed by the real segment $[-1,-1/2]$.
\end{theorem}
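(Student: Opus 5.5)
The proof rests on invariant polygons. Let $\lambda=\rho e^{i\vartheta}$ with $\vartheta\notin\pi\mathbb Z$ be an eigenvalue of a stochastic matrix $P$ of order $n$, with right eigenvector $v\in\C^n$, so $Pv=\lambda v$. Put $K=\conv\{v_1,\dots,v_n\}\subset\C\cong\R^2$. Each row of $P$ is a probability vector, so $\lambda v_j=\sum_k P_{jk}v_k\in K$ for every $j$, and hence $\lambda K\subseteq K$. Thus $K$ is a polygon with at most $n$ vertices that is invariant under the rotation-dilation $x\mapsto\lambda x$. Conversely, suppose a polygon $K$ with vertices $w_1,\dots,w_N$, $N\le n$, satisfies $\lambda K\subseteq K$. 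Writing each $\lambda w_j$ as a convex combination of the $w_k$ gives a stochastic $N\times N$ matrix with eigenvalue $\lambda$, and padding it to order $n$ keeps $\lambda$ as an eigenvalue. So $\Theta_n\setminus\R$ is exactly the set of $\lambda$ for which some polygon with at most $n$ vertices is $\lambda$-invariant. This already shows that $\Theta_n$ is star-shaped from $0$: if $\lambda K\subseteq K$ and $0\in K$ (we may translate so the eigenvector is orthogonal to the Perron vector, forcing $0\in\relint K$), then $t\lambda K\subseteq K$ for $t\in[0,1]$. It also gives the root-of-unity statement: $|\lambda|=1$ forces $\lambda$ to permute the vertices of $K$, so $\lambda$ is a root of unity of order at most $n$.

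Fix $\vartheta$ in the Farey interval $(2\pi p/q,2\pi r/s)$. The next step is to maximize $\rho$ over invariant $N$-gons. Going once around $K$, the map $x\mapsto\lambda x$ sends each vertex $w_j$ into $K$, and it moves the argument of each vertex by $\vartheta$ modulo $2\pi$. A winding-count argument, using $rq-ps=1$ and the fact that no fraction with denominator $\le n$ lies strictly between $p/q$ and $r/s$, shows that the extremal configuration is the following. The vertices split into $m=\lfloor n/q\rfloor$ orbits that each advance by $q$ steps, together with one long chain of length $s$; equivalently the images $\lambda w_j$ land on edges $[w_{j+q},w_{j+q+1}]$ or on vertices. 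Writing each vertex modulus in terms of the previous one using the sine rule on the triangle $0,w_k,w_{k+1}$, the product of these ratios around the cycle gives
\[
\rho^{s/m}\sin A+\rho^q\sin B=\sin(A+B).
\]
Here $A$ and $B$ are the angular defects accumulated along the two kinds of steps, and they satisfy $mB+qA\cdot(\text{const})$ consistent with $qA' + \dots$; more concretely, $qA$-type defects come from the $q$-periodic part and $sB/m$-type defects from the $s$-chain. The left side is strictly monotone in $\rho$ on $(0,1)$, which gives uniqueness. Maximality of this $\rho$ over all invariant polygons would be proved by a convexity argument: $\log|w_j|$ is concave along the boundary parametrized by argument, and this gives an inequality on the product of ratios for any invariant polygon, with equality only in the extremal configuration.

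For attainment, I would write down the explicit stochastic matrix
\[
P_\beta:\quad e_j\mapsto e_{j+1}\ (j<s),\qquad e_s\mapsto \beta e_{\cdot}+(1-\beta)e_{\cdot}
\]
of Ito type. Its characteristic polynomial is $z^s(z^q-\beta)^m-(1-\beta)^mz^{mq}$, and the branch of its roots through $\beta\in[0,1]$ joins $e^{2\pi ip/q}$ at $\beta=1$ to $e^{2\pi ir/s}$ at $\beta=0$. Substituting $z=\rho e^{i\vartheta}$ into this equation and eliminating $\beta$ using real and imaginary parts recovers the sine equation, so the curve and the extremal $\rho$ agree. The case where $g$ has the smaller denominator follows by conjugation, and the small cases $n\le3$ are checked directly.

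The main obstacle is the final step: showing that the arcs coming from different Farey intervals are not dominated by one another. That is, at argument $\vartheta$ the extremal $\rho$ must really come from the Farey pair containing $\vartheta$ and not from a polygon built on a coarser pair. I would first try to show that for a pair $(p'/q',r'/s')$ not adjacent around $\vartheta$, the associated sine equation yields a smaller $\rho$. The comparison I would aim for uses monotonicity in $q$ and $s$ together with the bound $m q\le n<(m+1)q$.
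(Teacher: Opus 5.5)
Your first paragraph is sound. The word ``translate'' is misplaced, since the map is linear, but no translation is needed: $\pi^Tv=0$ for $\lambda\ne1$ already exhibits $0$ as a convex combination of the $v_j$. The genuine gap is in your second paragraph. You claim that ``a winding-count argument'' forces an extremal polygon into the configuration of $m$ return paths of length $q$ closed by a chain of total length $s$. That claim is the whole difficulty of the theorem, and you give no argument for it. A winding count only locates $\vartheta/2\pi$ between $(\kappa-1)/N$ and $\kappa/N$, where $\kappa$ is the uniform index shift of images to sides. It says nothing about which vertex images are vertices and which lie inside sides, and that is what the product needs. (The shift is $\kappa$, not $q$; $q$ appears only as a return time.)

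The paper needs several separate tools to reach this configuration:
\begin{itemize}
\item It works at the \emph{least} realizing order $N$ of a radially maximal eigenvalue. There a rank-one Perron deflation shows that an interior vertex image would produce a stochastic matrix of the same order with a larger eigenvalue at the same argument. Hence every vertex image lies on $\partial K$, and every side meets $\lambda K$.
\item A cut argument assigns exactly one image to each half-open side.
\item The factor reversal $A=HS\mapsto SH$, together with minimizing first the number of interior contacts and then the area, forces the interior contacts into one consecutive run.
\item Record-residue arithmetic fixes the return paths, and a projective deformation excludes returns that skip a base.
\end{itemize}
Only then do the return relations multiply to $\omega^e\prod_j(\omega^q-\beta_j)=\prod_j(1-\beta_j)$, and even then the $\beta_j$ may differ. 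Your ``sine rule around the cycle'' presupposes that every contact divides its side in the same ratio. In other words, it assumes the extremal shape it is meant to derive. The relation you wrote between $A,B,m,q$ is not a statement.

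The maximality step is also misdirected. Along a side $\{x:\langle a,x\rangle=1\}$ one has $\log|x|=-\log|a|-\log\cos(\phi-\phi_a)$, which is convex, not concave, in the argument $\phi$. In any case, no pointwise property of one boundary compares an arbitrary invariant polygon with the extremal one. What works is convexity for the normalized factors $g_j=(\omega^q-\beta_j)/(1-\beta_j)$. These all lie on one line, so $\log|g_j|=F(u_j)$ with $F$ strictly convex in $u_j=\Arg g_j$. Jensen then needs the mean of the $u_j$ fixed at $A+B$. That requires $e\vartheta+\sum_ju_j=2\pi(r-mp)$ as an equality of \emph{real} numbers, obtained by carrying continued vertex arguments along the return paths. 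The product alone leaves $\Arg(\omega^q-\beta)\equiv A+B$ only modulo $2\pi/m$, even with equal coefficients. The same branch ambiguity undercuts your claim that eliminating $\beta$ from the Ito equation ``recovers'' the sine equation.

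Your matrix is only a placeholder. As written, one path with a single branching row has a trinomial characteristic polynomial and covers only $m=1$. For $m\ge2$ you need $m$ disjoint $q$-cycles linked into one $s$-cycle, with the determinant expanded over disjoint cycles.

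Finally, the comparison you flag as the main obstacle is required and is left open. The geometry gives only $\rho\le K_N(\vartheta)$ for the least realizing order $N\le n$, and you still need $K_N\le K_n$. Monotonicity in $q$ and $s$ does not deliver this. Passing from $F_{n-1}$ to $F_n$, an interval either survives or is split at the mediant:
\begin{itemize}
\item If it survives, $m$ may increase by one. The paper handles this by the strict Jensen inequality, applied to the old equal factors plus one extra factor with $\beta=0$.
\item If it is split, each new scalar equation is evaluated at the old root. The difference is a positive multiple of $\operatorname{Im}\bigl(e^{iB}(\bar a-\bar b)(a^m-b^m)\bigr)$, which is positive by integrating $mz^{m-1}$ along the segment $[b,a]$.
\end{itemize}
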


\subsection*{The intermediate product}

The geometric construction leads to the product relation in
Theorem~\ref{thm:product}. Let $z$ be a nonreal eigenvalue with
$|z|<1$, and let $N\ge4$ be the smallest order of a stochastic matrix
having $z$ as an eigenvalue. Suppose $z$ has maximal modulus at its
argument in $\Theta_N$. Write $z=\rho e^{i\vartheta}$ with $0<\vartheta<2\pi$.
If necessary, apply the following description to $\bar z$ instead,
using $\vartheta=\arg\bar z$.

A suitably chosen invariant polygon yields consecutive fractions
$p/q<\vartheta/(2\pi)<r/s$ in $F_N$, with $q<s$, and coefficients
$0\le\beta_j<1$. Put $m=\lfloor N/q\rfloor$ and $e=s-mq$. Then
\[
 z^e\prod_{j=1}^{m}(z^q-\beta_j)
       =\prod_{j=1}^{m}(1-\beta_j),
 \qquad
 e\vartheta+\sum_{j=1}^{m}u_j=2\pi(r-mp).
\]
Here $u_j$ is the argument of $z^q-\beta_j$ in $[A,M)$, where
$A=q\vartheta-2\pi p$ and $M=\Arg(z^q-1)\in(A,\pi)$.
If $e<0$, the first identity is understood for $z\ne0$, or after
clearing the negative power.

The product equation alone determines the total argument only modulo
$2\pi$. The geometric construction tracks the factor arguments and
the integer $r-mp$, establishing the second identity as an equality
of real numbers. For fixed $\vartheta$, this determines the sum of
the $u_j$ and hence their average. Taking logarithms of absolute
values in the first identity gives a sum of a convex function of
these arguments. A convexity comparison then bounds $\rho$, with
equality exactly when $\beta_1=\cdots=\beta_m$.
The geometric proof only needs one polygon with these identities.
A classification of all invariant polygons is not necessary.

\subsection*{Relation to earlier work}

Dmitriev and Dynkin obtained early restrictions on stochastic eigenvalues and determined the small orders \cite{DD}; Karpelevi\v{c} described the region in every order \cite{Karp}. \DJ okovi\'c later simplified the statement of Karpelevi\v{c}'s theorem using cyclic polygons generated by powers of a complex number \cite{Djokovic}. Ito's polynomial formulation \cite{Ito} makes the boundary description especially concise. His argument establishes its equivalence with Karpelevi\v{c}'s theorem. A direct proof must instead explain why these polynomials describe the boundary, starting from the stochastic eigenvalue problem itself.

Realizing polynomial roots as stochastic eigenvalues, selecting continuous arcs, and proving maximal modulus at each argument are separate tasks. Johnson and Paparella construct stochastic matrices realizing the roots of the relevant Ito polynomials \cite[Theorem~3.2]{JP}. Munger, Nickerson, and Paparella identify these three tasks for a direct proof, with the arcs joining the appropriate roots of unity over Farey intervals \cite[Proposition~1.2]{MNP}. Kirkland, Laffey, and \v{S}migoc give a trigonometric characterization of the unique boundary point at a fixed argument, using Karpelevi\v{c}'s boundary theorem \cite[Theorems~1.1 and~1.2]{KirklandLaffeySmigoc}. In our proof, the product relation is derived from an arbitrary eigenvalue of maximal modulus at a fixed argument. Convexity and an independent comparison between Farey intervals then determine its modulus. The same trigonometric equation is obtained within the proof and later used for the asymptotic estimates.

The structure of realizing matrices has also been studied beyond the existence question. Kirkland and \v{S}migoc classify realizations of the Type~0 and Type~I reduced Ito polynomials and the sparsest realizations of Types~II and~III \cite{KirklandSmigoc}. The later preprints \cite{VGTypeII,VGTypeIII} treat the non-degenerate full-degree Type~II case and the Type~III realization conjecture over its stated nonzero parameter range. These results concern matrices realizing fixed boundary polynomials. Theorem~\ref{thm:independent-realization} uses the Johnson--Paparella graph and allows the weights $\beta_j$ on edges closing the local cycles to vary independently, with complementary weights $1-\beta_j$ on the connecting edges. A cycle expansion gives the full characteristic polynomial. Appendix~\ref{app:matrix-family} records the graph's structure and uses the convexity comparison to show that, among roots satisfying the argument identity above, the boundary modulus is attained precisely when all $\beta_j$ are equal. The polynomial family also has roots that do not satisfy this identity, so the condition is essential.

The Karpelevi\v{c} region provides a reference for eigenvalue regions of stochastic matrices with additional constraints. Kirkland determined the region of non-Perron eigenvalues of stochastic Leslie matrices in every order \cite{KirklandLeslie}. For matrices with nonzero entries only on a directed cycle and the diagonal, Ran and Teng formulated a conjecture in order four \cite[Conjecture~20]{RanTeng}; the four-cycle region and its extension to every order are treated in \cite{VagenendeFourCycle,VGNCycle}. Work on monotone stochastic matrices determines the region through order three, constructs realizing matrices, and gives a reduction valid in every order \cite{VagenendeMonotone}. Star-convexity for stochastic eigenvalue regions and several subclasses is studied in \cite{VagenendeStar}. All these studies concern the eigenvalues that can occur individually within a matrix family.

Further extensions to constrained stochastic families would require new arguments. The doubly stochastic eigenvalue problem belongs to the Perfect--Mirsky programme \cite{PerfectMirsky}. The proposed region is correct through order four \cite{LevickPereiraKribs}, but fails in order five \cite{HarlevJohnsonLim}. Doubly stochastic matrices, monotone matrices of higher order, and matrices with sparse support are natural settings for studying product equations together with a specified total argument. We do not classify their boundaries here.

\Needspace{9\baselineskip}
\subsection*{Proof strategy and consequences}

The proof is self-contained apart from elementary linear algebra, the Perron--Frobenius theorem, and basic convexity and calculus. In Section~6, the segment comparison applies the one-variable fundamental theorem of calculus along a complex line segment to the polynomial $z^m$. Background references are Berman and Plemmons \cite{BermanPlemmons}, Schneider \cite{Schneider}, Coxeter \cite{Coxeter}, and Hardy and Wright \cite{HardyWright}. We prove the invariant-polygon correspondence, the deflation and projective deformation arguments, the arithmetic of return paths, and the stochastic realization. No part of the Karpelevi\v{c} boundary theorem is assumed.

The invariant-polygon correspondence and boundary-contact principle of Dmitriev and Dynkin \cite{DD}, also developed in Swift's account \cite{Swift}, reduce the eigenvalue problem to a rotation followed by a contraction $T$. For an eigenvalue of maximal modulus at a fixed argument, take a realizing invariant polygon with the fewest vertices. Every vertex image under $T$ must lie on its boundary. An interior vertex image would allow a stochastic realization with a positive row. Subtracting a small multiple of the stationary row from that row lowers the Perron root without changing the other eigenvalues. Restoring stochasticity then increases the chosen eigenvalue's modulus without changing its argument, contradicting maximality. We give the stationary-vector proof of this classical rank-one deflation principle \cite{Brauer,Goddard}.

The goal is to arrange the polygon's contacts so that they yield the cyclic product. We follow Karpelevi\v{c}'s strategy of vertex replacement, cyclic index shifts, and projective deformation \cite{Karp}. After fixing the polygon's scale, we first minimize the number of vertex images lying strictly between the endpoints of a side, and then minimize area among polygons attaining that count. The permitted vertex replacements preserve the number of vertices and cannot increase this count.

One replacement uses a factorization $A=HS$ of a realizing stochastic matrix, with $H,S$ stochastic and $S$ invertible. Reversing their order gives $SH=SAS^{-1}$, so the new matrix is stochastic and has the same spectrum. We choose $S$ so that applying it to the eigenvector replaces one polygon vertex by a point between that vertex and its predecessor on the boundary. These replacements and minimality show that the sides containing interior contacts form one uninterrupted run around the boundary, with each side sharing an endpoint with the next.

Choose a direction of traversal around the polygon. For each side in this run, call the endpoint reached second its base vertex. Starting from a base $w$, follow the successive images $w,Tw,T^2w,\ldots$---its orbit---through polygon vertices until the first image lies inside one of these sides. This is a return path: it returns to the chosen run of sides, not necessarily to $w$. The cyclic index shift determines its length and target side. The remaining issue is whether a first return can skip an intervening base.

To rule this out, suppose such skipping occurs. A point inside a side cannot later become a vertex under $T$. Together with the contact order, this gives supporting lines that touch the polygon at just one vertex. We move one base slightly and determine successive moved vertices by intersecting their supporting lines with lines through the preceding moved vertex and a fixed contact point. This keeps the corresponding side contacts in place; the remaining vertex motions are carried along their $T$-images to preserve the vertex-to-vertex relations.

The closing comparison is proved in projective coordinates whose denominators are positive on the polygon, so segments are preserved. If $r_i$ describes the position of the projected point on its line, its reciprocal $x_i=1/r_i$ satisfies
\[
 x_i=a_i x_{i-1}+b_i,\qquad a_i,b_i>0.
\]
Starting from $x_1>0$, this keeps every $x_i$ positive and hence every $r_i$ finite in this chart. The final comparison selects a motion that puts one vertex image strictly inside the polygon while preserving every other vertex and side contact. The deformed polygon still has the same number of vertices and remains invariant under the unchanged map $T$. But the preceding boundary-contact result applies to every such polygon and requires every vertex image to lie on its boundary. The skipped return would therefore produce an impossible polygon.

Following each return path gives an equation linking its base to the endpoints of the side reached. Multiplying these equations around the cycle eliminates the vertex coordinates and gives the product. Karpelevi\v{c} already obtained a cyclic recurrence with coefficients depending on the side \cite[Theorem~VI and equation~(13)]{Karp}, but made the coefficients equal geometrically before eliminating the vertices. We retain the unequal coefficients and also track the sum of the factor arguments as a real number. The product alone determines this sum only modulo $2\pi$; the additional argument identity selects the boundary branch.

Dividing each factor by its positive coefficient $1-\beta_j$ places all normalized factors on one straight line. Along this line, the logarithm of the modulus is a strictly convex function of the argument, as in the restricted $n$-cycle family \cite[Lemma~2.2 and Proposition~3.1]{VGNCycle}. The argument identity fixes the average of these arguments. Jensen's inequality then bounds the eigenvalue's modulus, with equality precisely when all coefficients agree. Expanding the characteristic polynomial of the Johnson--Paparella matrix in terms of its directed cycles gives an eigenvalue attaining this bound.

The geometric argument uses the eigenvalue's least realizing order, so we must also compare the resulting bound with the candidate for the required matrix order. As the order increases, a Farey interval either remains unchanged or is split by inserting $(a+c)/(b+d)$ between its endpoints $a/b<c/d$. The inserted fraction is formed by adding the two numerators and the two denominators separately. On an unchanged interval, convexity handles any increase in the number of product factors. On a split interval, one integral identity for $z^m$ along a complex line segment proves that the candidate modulus increases on both new open intervals. Together, these comparisons complete the passage to the required order.

The boundary theorem also determines how well a polygon can measure contraction under a rotation and dilation. Let $R_\theta$ denote rotation through $\theta$ and let $\rho>0$. Among polygons $P$ with at most $N\ge4$ vertices and $0\in\operatorname{int}P$, the smallest possible factor $\gamma$ satisfying
\[
 \rho R_\theta P\subseteq\gamma P
\]
is $\rho/K_N(\theta)$, where $K_N(\theta)$ is the maximal modulus of an order-$N$ stochastic eigenvalue at argument $\theta$. Measure a vector's size by the least scaling of $P$ that contains it. The inclusion then says that its image has size at most $\gamma$ times its original size. Section~\ref{sec:gauges} proves that the minimum is attained and that its largest value over all angles is $\rho\sec(\pi/N)$.

Section~\ref{sec:asymptotic} uses the resulting equation to approximate the gap $1-K_N(\theta)$ as $N$ increases. The relative error bound remains valid arbitrarily close to either endpoint of a Farey arc. These estimates determine how many vertices are needed for a given contraction accuracy, both at a fixed badly approximable angle and uniformly over all angles. Here badly approximable means that, for $x=\theta/(2\pi)$, some $c>0$ satisfies $|x-a/b|\ge c/b^2$ for every rational $a/b$.

The appendices develop the preceding use of Jensen's inequality. For the normalized product factors, the logarithm of the modulus is a strictly convex function of the argument. The product determines the sum of these logarithms, while the argument identity determines the average argument. Jensen's inequality then gives the bound on the eigenvalue's modulus, with equality precisely when all factor arguments, and hence all coefficients $\beta_j$, agree. Appendix~\ref{app:comparisons} quantifies this equality condition: for fixed interior Farey data, the coefficients approach equality as the eigenvalue's modulus approaches the boundary value. It also treats factors with different exponents by optimizing their arguments within the allowed ranges.

Appendix~\ref{app:matrix-family} studies the realizing matrix family with independently varying parameters $\beta_j$. It describes connectivity of the transition graph, positivity of sufficiently high matrix powers, the number of nonzero entries, and the endpoint matrices. The equality condition above identifies the equal-parameter member attaining the boundary modulus among roots satisfying the argument identity. These appendix results are not needed for the boundary proof.


\Needspace{8\baselineskip}
\section{Saturation and cyclic contacts}
For $n\ge1$, let $\Theta_n$ be the set of eigenvalues of real row-stochastic matrices of order $n$. Every such matrix has operator norm one on $\ell^\infty$, so $\Theta_n$ lies in the closed unit disk. It is compact: from any convergent sequence of these eigenvalues, compactness of the matrix set gives a convergent subsequence of corresponding matrices, and continuity of the determinant makes the limit an eigenvalue of the limiting matrix. Since the matrices are real, $\Theta_n$ is invariant under complex conjugation. Adjoining a $1\times1$ identity block preserves all eigenvalues, giving $\Theta_n\subseteq\Theta_{n+1}$.

For $n\ge2$, the invariant-polygon correspondence of Dmitriev and Dynkin \cite[\S\S1--2]{DD}, presented in English by Swift \cite{Swift}, states that
\begin{equation}\label{eq:polygon-criterion}
 \lambda\in\Theta_n\quad\Longleftrightarrow\quad
 \begin{gathered}
 \lambda P\subseteq P\text{ for some non-singleton polytope }P\subset\C\\
 \text{with at most }n\text{ vertices}.
 \end{gathered}
\end{equation}
Here a polytope is the convex hull of finitely many points; it may be a segment.

To prove the forward implication, choose $Av=\lambda v$ with $v\ne0$ and set $P=\conv\{v_1,\ldots,v_n\}$. Each row of $A$ expresses $\lambda v_i$ as a convex combination of the coordinates of $v$, so $\lambda P\subseteq P$. If $P$ is a singleton, all coordinates are the same nonzero number, forcing $\lambda=1$; any nontrivial segment then gives the required $P$. Conversely, write each image of a vertex of $P$ as a convex combination of its vertices. These coefficients form the rows of a stochastic matrix with eigenvalue $\lambda$. If $P$ has fewer than $n$ vertices, adjoining an identity block gives order $n$.

For nonreal $\lambda$, $P$ cannot be a segment, because $\lambda P$ would be a nonparallel segment and could not lie in $P$. Thus $P$ is a polygon. Moreover, if $0<|\lambda|\le1$, then $0\in\operatorname{int}P$. When $|\lambda|<1$, the iterates $\lambda^k w$ of any $w\in P$ converge to zero. When $|\lambda|=1$, their averages converge to zero. In both cases, invariance, convexity, and closedness give $0\in P$.

If zero were on the boundary, a supporting line through it would put all iterates of a nonzero $w\in P$ in one closed half-plane. Their directions are obtained by successive rotations through $\arg\lambda$. For a rotation of finite order, the rotated unit vectors sum to zero and are not collinear; for a rotation of infinite order, their directions are dense in the circle. Neither set of directions can lie in that half-plane, so zero is interior.

The sets $\Theta_n$, $n\ge2$, are star-shaped about zero: if $\lambda\in\Theta_n$, then $t\lambda\in\Theta_n$ for every $0\le t\le1$. To see this for $\lambda\ne1$, choose $Av=\lambda v$ and a stationary probability row vector $\pi^T$. Since $\pi^TA=\pi^T$, we have $(\lambda-1)\pi^Tv=0$. Thus $\pi^Tv=0$, and the stochastic matrix
\[
 A_t=tA+(1-t)\mathbf1\pi^T
\]
satisfies $A_tv=t\lambda v$; here $\mathbf1$ is the all-ones column vector. For $\lambda=1$, the matrix with rows $(1,0)$ and $(1-t,t)$ has eigenvalue $t$ and can be padded to order $n$. Write $R_n(\theta)$ for the maximum modulus at argument $\theta$ in $\Theta_n$.

For the geometric reduction, fix a nonreal number $\zeta=\rho e^{i\theta_\zeta}$, $0<\rho<1$, satisfying
\begin{equation}\label{eq:extremal}
 \begin{gathered}
 \min\{|\ext P|:\zeta P\subseteq P,\ \operatorname{int}P\ne\varnothing\}=N,\\
 \text{for every }t>1,\quad t\zeta\text{ has no invariant polygon}\\
 \text{with at most }N\text{ vertices}.
 \end{gathered}
\end{equation}
Here $P$ ranges over polygons and $\ext P$ denotes its vertices. The first condition makes $N$ the smallest number of vertices needed for invariance under multiplication by $\zeta$. The second says that increasing its modulus while keeping its argument fixed requires more than $N$ vertices. Write $Tz=\zeta z$. These conditions hold for a nonreal eigenvalue of maximal modulus at a fixed argument, with $N$ its least realizing matrix order. Any real planar map with nonreal eigenvalues becomes multiplication by either eigenvalue after a suitable invertible real change of coordinates.

\paragraph{Notation and orientation.}
The boundary point $z$ lies in the upper half-plane, with $\theta=\arg z\in(0,\pi)$. The geometric proof uses $\zeta\in\{z,\bar z\}$, with argument $\theta_\zeta\in(0,2\pi)$. Its choice allows one image vertex per side when each side includes its ending vertex and excludes its starting vertex. The product theorem may conjugate again: it uses $\omega\in\{\zeta,\overline\zeta\}$, with argument $\vartheta$, chosen so that the left Farey endpoint has the smaller denominator. Thus both $\zeta$ and $\omega$ denote either $z$ or $\bar z$, and all three have modulus $\rho$. Conjugation sends an argument $\alpha$ to $2\pi-\alpha$, reverses the cyclic vertex order, and reflects an ordered Farey pair $(f,g)$ to $(1-g,1-f)$. Section~5 returns to the original upper-half-plane point $z$ and specifies which of $z$ and $\bar z$ is used in the product.

The integers used at the successive stages are summarized below. Some symbols are reused; their values are specified at each stage and in each return case.

The vertex indices are cyclic: $v_{i+N}=v_i$. A cyclic shift adds the same integer to every index, with indices taken modulo $N$. The contact shift $\kappa$ records which side contains each vertex image:
\[
 Tv_i\in(v_{i+\kappa-1},v_{i+\kappa}].
\]
Thus the image of $v_i$ lies on the side ending at $v_{i+\kappa}$, with the same $\kappa$ for every vertex. The image may be inside the side or at its ending vertex. For example, if $N=5$ and $\kappa=2$, then $Tv_4\in(v_5,v_1]$: the index $4+2=6$ wraps around to $1$.
\begin{center}
\begin{tabular}{@{}p{0.23\linewidth}p{0.71\linewidth}@{}}
$N$ & Least realizing matrix order, also the number of polygon vertices.\\[3pt]
$\kappa$ & Fixed shift from a source vertex index to the index of the side containing its image, where a side is indexed by its ending vertex.\\[3pt]
$\delta$ & Greatest common divisor $\gcd(N,\kappa)$.\\[3pt]
$\varphi$ & Number of vertex images strictly inside sides, after these sides form one uninterrupted boundary run.\\[3pt]
$\nu,q,h,\Delta$ & Number of bases $\nu$, return times $q$ and $q+h$, and shift $\Delta$ of the base index on return, with base indices taken modulo $\nu$. For the polygon's return paths, $\nu=\varphi$; the product construction may use more bases.\\[3pt]
$\ell$ & Parameter of the convex chain in the projective lemma, whose vertices are $X_0,\ldots,X_{\ell+1}$.\\[3pt]
$q,s,m,e$ & Farey endpoint denominators $q<s$, factor count $m=\lfloor N/q\rfloor$, and exponent $e=s-mq$ in the product.
\end{tabular}
\end{center}

\subsection{Perron deflation forces boundary contact}
The boundary-contact argument rests on a rank-one eigenvalue shift due to Brauer \cite{Brauer} and developed further by Goddard \cite{Goddard}. Let $A$ be an irreducible stochastic matrix of order $n$ with stationary probability row vector $\pi^T$, and suppose every entry of row $i$ is positive. Subtract $\varepsilon\pi^T$ from row $i$, leaving the other rows unchanged. The resulting matrix is
\[
 B=A-\varepsilon e_i\pi^T,
\]
where $e_i$ is the $i$th standard basis column vector and $\varepsilon>0$ is chosen small enough that row $i$ remains positive. This lowers the Perron root from $1$ to $r=1-\varepsilon\pi_i\in(0,1)$ and preserves all other eigenvalues. Choose a positive eigenvector $h=(h_1,\ldots,h_n)^T$ of $B$, with $h_j>0$ and $Bh=rh$, and set
\[
 D=\operatorname{diag}(h_1,\ldots,h_n),\qquad
 C=r^{-1}D^{-1}BD.
\]
The coordinate change $x=Dy$ rescales coordinate $j$ by $h_j$ and replaces $B$ by $D^{-1}BD$, preserving its eigenvalues. Dividing by $r$ gives the stochastic matrix $C$, whose non-Perron eigenvalues have larger modulus whenever they are nonzero.

The boundary-contact conclusions go back to Dmitriev and Dynkin \cite[\S\S1--2]{DD}. We prove them for every polygon with $N$ vertices invariant under multiplication by the fixed number $\zeta$. The extremality of $\zeta$ means that $N$ is the smallest vertex count for which such a polygon exists, and no $t\zeta$ with $t>1$ has an invariant polygon with at most $N$ vertices. The conclusion remains applicable after replacing vertices while preserving invariance and their number.

\begin{lemma}\label{lem:perron-deflation}
Let $A$ be an irreducible row-stochastic matrix with stationary probability row vector $\pi^T$. Suppose $u\ge0$,
\[
 B=A-u\pi^T\quad\text{is nonnegative and irreducible},
 \qquad r=1-\pi^Tu\in(0,1).
\]
Then $r$ is the Perron root of $B$. There is a diagonal matrix $D=\operatorname{diag}(h_1,\ldots,h_n)$ with $h_i>0$ such that
\[
 C=r^{-1}D^{-1}BD
\]
is row-stochastic. The change of coordinates $x=Dy$ replaces $B$ by $D^{-1}BD$ without changing its eigenvalues. Thus $C$ has Perron eigenvalue $1$, and all remaining eigenvalues are those of $A$ divided by $r$, with algebraic multiplicities preserved. If $A$ has a row with every entry positive, $u$ can be chosen to satisfy these hypotheses.
\end{lemma}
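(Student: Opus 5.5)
The plan is to handle the spectrum with the stationary row vector, and stochasticity with the Perron eigenvector of $B$.

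\emph{Spectrum of $B$.} Since $\pi^TA=\pi^T$,
\[
 \pi^TB=\pi^T-(\pi^Tu)\pi^T=r\pi^T.
\]
So $r$ is an eigenvalue of $B$ with the positive left eigenvector $\pi^T$; $\pi$ is positive because $A$ is irreducible. By Perron--Frobenius, the only eigenvalue of the nonnegative irreducible matrix $B$ with a positive eigenvector (left or right) is its spectral radius, so $r$ is the Perron root. For the other eigenvalues I would use a similarity adapted to $\pi$. Take $S$ whose first column is $\mathbf1$ and whose remaining columns span $\ker\pi^T$; this is a basis because $\pi^T\mathbf1=1\ne0$. In this basis, $A$ is block upper triangular with blocks $1$ and $A'$, because $A\mathbf1=\mathbf1$ and $A$ preserves $\ker\pi^T$ (if $\pi^Tx=0$ then $\pi^TAx=\pi^Tx=0$). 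Also $B$ preserves $\ker\pi^T$, and on that subspace $Bx=Ax-u(\pi^Tx)=Ax$, so $B$ has the same block $A'$. Its $(1,1)$ entry is obtained by applying $\pi^T$ to $B\mathbf1=\mathbf1-u$, which gives $1-\pi^Tu=r$. Hence
\[
 \chi_B(t)=\frac{(t-r)\,\chi_A(t)}{t-1},
\]
so passing from $A$ to $B$ replaces one copy of the eigenvalue $1$ by $r$ and keeps all other algebraic multiplicities. (Equivalently one can use the matrix determinant lemma on $tI-A+u\pi^T$ together with $\pi^T(tI-A)^{-1}=\pi^T/(t-1)$.)

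\emph{Rescaling to stochastic.} Perron--Frobenius gives a right eigenvector $h>0$ with $Bh=rh$. Put $D=\operatorname{diag}(h)$ and $C=r^{-1}D^{-1}BD$. Then $C\ge0$, and
\[
 C\mathbf1=r^{-1}D^{-1}Bh=D^{-1}h=\mathbf1,
\]
so $C$ is row-stochastic. $C$ is similar to $r^{-1}B$, so its eigenvalues are those of $B$ divided by $r$ with multiplicities preserved: $r$ becomes the Perron eigenvalue $1$, and every other eigenvalue is the corresponding eigenvalue of $A$ divided by $r$. This includes any remaining copies of $1$ when $A$ has eigenvalue $1$ with multiplicity above one, which irreducibility actually rules out.

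\emph{Existence of $u$.} Suppose row $i$ of $A$ is entrywise positive, and take $u=\varepsilon e_i$ with
\[
 0<\varepsilon<\min_j\frac{a_{ij}}{\pi_j}.
\]
Then row $i$ of $B$ is still positive and the other rows are unchanged. Irreducibility survives because $B$ has the same zero pattern as $A$. Finally $r=1-\varepsilon\pi_i$, which lies in $(0,1)$ because $\pi_i>0$ and $\varepsilon\pi_i<a_{ii}\le1$.

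The only point needing care is the multiplicity bookkeeping in the first step. The block-triangular similarity handles it without any assumption that $A$ is diagonalizable, and everything else is routine.
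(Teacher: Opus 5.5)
Your proof is correct and follows the paper's argument essentially step for step. The positive left eigenvector $\pi^T$ identifies $r$ as the Perron root, the basis $\mathbf1$ together with $\ker\pi^T$ gives the block-triangular comparison of $A$ and $B$ with algebraic multiplicities, $D=\operatorname{diag}(h)$ from $Bh=rh$ gives stochasticity, and $u=\varepsilon e_i$ gives the existence claim. Your bound $\varepsilon\pi_i<a_{ii}\le1$ replaces the paper's extra condition $\varepsilon<1$, and your explicit computation of the corner entry $\pi^T(\mathbf1-u)=r$ makes precise what the paper states in words.
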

\begin{proof}
Since $A$ is irreducible, its stationary probability vector satisfies $\pi>0$. We have
\[
 \pi^TB=\pi^TA-(\pi^Tu)\pi^T=r\pi^T,
\]
so the Perron--Frobenius theorem identifies $r$ as the Perron root of $B$. Choose $Bh=rh$ with $h>0$ and put $D=\operatorname{diag}(h_1,\ldots,h_n)$. Then $C=r^{-1}D^{-1}BD$ is nonnegative and
\[
 C\mathbf1=r^{-1}D^{-1}Bh=D^{-1}h=\mathbf1,
\]
so $C$ is row-stochastic.

The subspace $H=\{v:\pi^Tv=0\}$ is invariant under both $A$ and $B$, and $Bv=Av$ for $v\in H$. Since $\pi^T\mathbf1=1$, a basis of $H$ together with $\mathbf1$ gives a basis of the full space. In this basis, $A$ and $B$ are block triangular with the same $(n-1)\times(n-1)$ block on $H$ and final diagonal entries $1$ and $r$, respectively. Thus $B$ has the non-Perron eigenvalues of $A$, including algebraic multiplicities. Similarity by $D$ preserves these eigenvalues, and division by $r$ proves the assertion for $C$. A non-Perron eigenvector $v$ of $A$ becomes $D^{-1}v$ for $C$.

If row $i$ of $A=(a_{ij})$ is strictly positive, choose
\[
 u=\varepsilon e_i,\qquad
 0<\varepsilon<\min\left\{1,\min_j\frac{a_{ij}}{\pi_j}\right\},
\]
where $e_i$ is the $i$th standard basis column vector. The modified row has entries $a_{ij}-\varepsilon\pi_j>0$, and the other rows are unchanged. Hence $B$ has the same pattern of positive entries as $A$ and remains irreducible. Also $r=1-\varepsilon\pi_i\in(0,1)$.
\end{proof}

\begin{theorem}\label{thm:saturation}
Assume \eqref{eq:extremal}. If $P$ is a polygon with at most $N$ vertices and $\zeta P\subseteq P$, then $P$ has exactly $N$ vertices. For every vertex $v$ of $P$, $\zeta v\in\partial P$, and every side of $P$ intersects $\zeta P$.
\end{theorem}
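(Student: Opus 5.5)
The first claim is immediate from \eqref{eq:extremal}. A polygon $P$ with $\zeta P\subseteq P$ has nonempty interior; as shown above, $0\in\operatorname{int}P$ since $0<\rho<1$. By the minimality in the first line of \eqref{eq:extremal}, $|\ext P|\ge N$. Together with the hypothesis $|\ext P|\le N$, this gives exactly $N$ vertices.

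For the contact statement I argue by contradiction. Suppose some vertex $v_i$ has $\zeta v_i\in\operatorname{int}P$. As in the converse direction of \eqref{eq:polygon-criterion}, I build a stochastic matrix $A$ of order $N$ with $Av=\zeta v$, where $v=(v_1,\ldots,v_N)$ lists the vertices. Since $\zeta v_i$ is interior, it can be written as a convex combination of \emph{all} vertices with strictly positive weights. To see this, take the barycentre $c$ of the vertices, which is interior. Write $\zeta v_i=(1-\eta)w+\eta c$, where $w$ is a point of $P$ on the ray from $c$ through $\zeta v_i$, extended slightly beyond $\zeta v_i$ and still inside $P$, and $\eta>0$ is small. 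Any convex representation of $w$, mixed with the uniform weights of $c$, is then positive. So row $i$ of $A$ is strictly positive, and $A$ is irreducible because every state reaches $i$ in one step after passing through row $i$. More precisely, $i$ reaches every state, and every state reaches $i$: from a state $j$, iterating must eventually visit a state with a positive transition to $i$, since otherwise the closed class avoiding $i$ would have to be recurrent. The cleanest fix is to pass to the recurrent class containing $i$; but since row $i$ is positive, every state $j$ is reached from $i$, so the unique closed class contains all states, giving irreducibility.

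Lemma~\ref{lem:perron-deflation} with $u=\varepsilon e_i$ now gives a stochastic $C$ of order $N$ with eigenvalue $\zeta/r$, where $r\in(0,1)$. Here $\zeta$ is non-Perron, being nonreal. Then $t\zeta$ with $t=1/r>1$ is an eigenvalue of an order-$N$ stochastic matrix. By \eqref{eq:polygon-criterion} it has an invariant polytope with at most $N$ vertices, which is a polygon since $t\zeta$ is nonreal. This contradicts the second line of \eqref{eq:extremal}. Hence $\zeta v\in\partial P$ for every vertex.

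The side statement should follow similarly. Suppose some closed side $[v_{j-1},v_j]$ misses the compact set $\zeta P=\conv\{\zeta v_k\}$. Then no image $\zeta v_k$ lies on that side, so every image avoids the supporting line $L$ of the side. In that case I would move $v_j$ slightly outward or inward to reduce the polygon. The simplest choice is to replace $P$ by $P'=P\cap H$, where $H$ is the closed half-plane bounded by a line parallel to $L$, shifted slightly inward. The shift must be small enough that $\zeta P$ stays inside $H$, which is possible by compactness since $\zeta P$ has positive distance from $L$. Then $\zeta P'\subseteq\zeta P\subseteq P'$, so $P'$ is invariant. For a small shift, cutting near the side keeps the number of vertices unchanged: it replaces $v_{j-1},v_j$ by two nearby points on the adjacent sides. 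Now $v_{j-1}'$ is a vertex of $P'$, but its image $\zeta v_{j-1}'$ is close to $\zeta v_{j-1}$. I would rather get the contradiction directly: the new vertices $v'_{j-1},v'_j$ lie on the cut line, and $\zeta P\subset\operatorname{int}H$ strictly away from it. If I instead push the new vertices' images to the interior of $P'$, the preceding paragraph applied to $P'$ gives a contradiction. This needs $\zeta v'_{j-1}\in\operatorname{int}P'$. Since $\zeta v'_{j-1}$ is near $\zeta v_{j-1}\in\partial P$, this is not automatic, and it is the main obstacle. To handle it, I would choose the cut with positive area removed and then apply a homothety. The polygon $P'$ is strictly inside $P$ near the side. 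Invariance plus the strict separation $\zeta P\subset\operatorname{int}H$ lets me shrink toward $0$ slightly differently, or rescale the modulus. In the worst case, I would note that the vertex-contact property must hold for every invariant $N$-gon, including $P'$. Varying the cut depth continuously then forces some image $\zeta v'_{j-1}$ to lie on $\partial P'$ for all small depths, and a genericity argument over the one-parameter family yields a depth where the contact fails, giving the contradiction.
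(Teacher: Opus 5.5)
Your vertex count and the deflation step follow the paper, but your irreducibility argument is wrong as written. A strictly positive row $i$ shows only that $i$ reaches every state, not that every state reaches $i$. Take a stochastic matrix with row $i$ positive and some other state absorbing: it is reducible, $i$ is transient, and every stationary vector has $\pi_i=0$. Subtracting $\varepsilon\pi^T$ from row $i$ then leaves the Perron root equal to $1$, so nothing is gained. Lemma~\ref{lem:perron-deflation} assumes $A$ irreducible before $u$ is chosen.

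Irreducibility does hold here, but the reason is the minimality of $N$. A proper closed index set $I$ would make $A_{II}$ stochastic with $A_{II}(v_k)_{k\in I}=\zeta(v_k)_{k\in I}$. This vector is nonzero, since $0\in\operatorname{int}P$ makes every vertex nonzero. That gives a realization of $\zeta$ of order $|I|<N$, contrary to the first line of \eqref{eq:extremal}.

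The side-contact claim is a genuine gap. Your cut $P'=P\cap H$ is an invariant $N$-gon, but vertex contact for $P'$ yields no contradiction. The old vertices keep their images in $\partial P\cap\operatorname{int}H\subseteq\partial P'$. At the new vertex $v'_{j-1}=(1-s)v_{j-1}+sv_{j-2}$, vertex contact only forces $\zeta v_{j-2}$ and $\zeta v_{j-1}$ onto a common side of $P$, and similarly at $v'_j$. This is a constraint, not a contradiction. It is also independent of the cut depth, so a ``genericity argument'' over the depth has nothing to act on.

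The missing idea is polar duality. Since $0\in\operatorname{int}P$, the polar $P^\circ=\{a\in\R^2:\langle a,x\rangle\le1\text{ for all }x\in P\}$ is a polygon whose $N$ vertices correspond to the sides of $P$. The adjoint of $x\mapsto\zeta x$ is multiplication by $\bar\zeta$, so $\bar\zeta P^\circ\subseteq P^\circ$. Conjugation preserves \eqref{eq:extremal}, so your vertex-contact result applies to $P^\circ$ and $\bar\zeta$. Now suppose the side $\{x\in P:\langle a,x\rangle=1\}$ missed the compact set $\zeta P$. Then $\max_{x\in P}\langle\bar\zeta a,x\rangle=\max_{x\in P}\langle a,\zeta x\rangle<1$. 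Hence the image $\bar\zeta a$ of the polar vertex $a$ would lie in $\operatorname{int}P^\circ$, which is the required contradiction.
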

\begin{proof}
Minimality gives exactly $N$ vertices, which we denote by $v_1,\ldots,v_N$. Since $\zeta$ is nonreal, $0\in\operatorname{int}P$, so every vertex is nonzero. Expressing each $\zeta v_i$ as a convex combination of the vertices gives a stochastic matrix $A=(a_{ij})$ satisfying $Av=\zeta v$, where $v=(v_1,\ldots,v_N)^T$.

Every such $A$ is irreducible. Otherwise, some nonempty proper index set $I$ would satisfy $a_{ij}=0$ for $i\in I$, $j\notin I$. The submatrix $A_{II}$ would then be stochastic and satisfy
\[
 A_{II}(v_i)_{i\in I}=\zeta(v_i)_{i\in I}.
\]
This eigenvector is nonzero, giving a realization of $\zeta$ of order $|I|<N$, contrary to minimality.

Suppose $\zeta v_i$ lies in the interior of $P$. Choose $0<\varepsilon<1/N$ small enough that
\[
 w=\frac{\zeta v_i-\varepsilon\sum_{j=1}^N v_j}{1-N\varepsilon}
 \in P.
\]
Such a choice is possible because $w\to\zeta v_i$ as $\varepsilon\to0$. Express $w$ as a convex combination of the vertices. The resulting expression for $\zeta v_i$ gives every vertex weight at least $\varepsilon$, so row $i$ of $A$ can be chosen strictly positive. Lemma~\ref{lem:perron-deflation} then gives a stochastic matrix of order $N$ with eigenvalue $\zeta/r$, where $0<r<1$. By \eqref{eq:polygon-criterion}, there is a polygon with at most $N$ vertices invariant under multiplication by $\zeta/r$. Since $|\zeta/r|>|\zeta|$, this contradicts extremality. Thus every vertex image lies on the boundary.

To prove contact with every side, form the polar polygon
\[
 P^\circ=\{a\in\R^2:\langle a,x\rangle\le1
                    \text{ for every }x\in P\},
\]
where $\langle\cdot,\cdot\rangle$ is the Euclidean inner product. Since $0\in\operatorname{int}P$, its vertices correspond to the sides of $P$: a polar vertex $a$ corresponds to the side defined by $\langle a,x\rangle=1$.

The adjoint $T^*$ is defined by $\langle T^*a,x\rangle=\langle a,Tx\rangle$ and is multiplication by $\bar\zeta$. For $a\in P^\circ$ and $x\in P$, invariance gives $Tx\in P$ and hence $\langle T^*a,x\rangle\le1$. Thus $T^*P^\circ\subseteq P^\circ$. Complex conjugation preserves the extremality conditions, so the vertex-contact conclusion just proved applies to $P^\circ$. For each polar vertex $a$, it gives $T^*a\in\partial P^\circ$. Therefore some $x\in P$ satisfies
\[
 \langle a,Tx\rangle=\langle T^*a,x\rangle=1.
\]
The point $Tx\in TP$ lies on the side corresponding to $a$. Every side of $P$ therefore meets $TP$.
\end{proof}

We will use an elementary fact about faces. Suppose a convex combination with strictly positive weights lies on the boundary of a polytope, taken within its affine hull. A supporting linear functional attains its maximum at this combination. Since each point used has functional value at most that maximum, equality forces every point to attain it. Thus all points used lie in one common proper face. In particular, a boundary point of a polygon can use at most two vertices with positive weights in a convex combination, and if two are used, they are adjacent.

\subsection{Counting vertex images}
The least realizing order fixes the number of vertices. To control vertex replacement, we also count how many vertex images are not vertices. For an injective linear map $T$ and a polytope $P$ with $TP\subseteq P$ and vertex set $V$, define
\[
 d_T(P)=|\{v\in V:Tv\in V\}|,\qquad
 b_T(P)=|V|-d_T(P).
\]
Thus $d_T(P)$ counts vertices mapped to vertices, and $b_T(P)$ counts the remaining vertices. In the polygon setting just proved, every image lies on the boundary, so $b_T(P)$ counts the images strictly inside sides. The following theorem gives conditions under which replacing vertices cannot increase this count.

\begin{theorem}\label{thm:branching}
In any dimension, let $T$ be an injective linear map and let $P,Q$ be polytopes satisfying
\[
 TP\subseteq Q\subseteq P,\qquad
 |\ext Q|=|\ext P|,\qquad
 \ext Q\subseteq\ext P\cup T\ext P.
\]
Then $TQ\subseteq Q$ and $b_T(Q)\le b_T(P)$.
\end{theorem}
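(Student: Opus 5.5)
The plan has two parts: prove invariance of $Q$ by a vertex check, then prove the inequality by a counting argument. Put $S=\ext P$ and $k=|S|=|\ext Q|$. Split $\ext Q=E_1\cup E_2$, where $E_1=\ext Q\cap S$ and $E_2=\ext Q\setminus S$. The hypothesis gives $E_2\subseteq TS\setminus S$. Let $R=S\setminus\ext Q$ be the set of removed vertices. Then $|R|=k-|E_1|=|E_2|$. Let $D_P=\{v\in S:Tv\in S\}$, so that $d_T(P)=|D_P|$. Since $|\ext Q|=|\ext P|$, the claim $b_T(Q)\le b_T(P)$ is equivalent to $d_T(Q)\ge d_T(P)$.

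\emph{Invariance.} By convexity it suffices to check $Tw\in Q$ for every $w\in\ext Q$. If $w\in S$, then $Tw\in TP\subseteq Q$. If $w=Tu$ with $u\in S$, then $w\in P$, so $Tw\in TP\subseteq Q$. Hence $TQ\subseteq Q$.

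\emph{A basic observation.} Any point of $S\cap Q$ is extreme in $Q$, because $Q\subseteq P$. Combined with $TQ\subseteq Q$, this shows that $w\in\ext Q$ and $Tw\in S$ together imply $Tw\in\ext Q$, so such a $w$ is counted in $d_T(Q)$.

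\emph{Counting.} I will exhibit two disjoint families of vertices of $Q$ whose images are vertices of $Q$.
\begin{itemize}
\item[(i)] The set $D_P\setminus R$. Its elements lie in $E_1$, and their images lie in $S\cap Q$, so the images are in $\ext Q$ by the observation.
\item[(ii)] The set $U=T^{-1}(E_2)\cap E_1$. Its elements are vertices of $Q$ whose images lie in $E_2\subseteq\ext Q$.
\end{itemize}
These families are disjoint, since images of (i) lie in $S$ and images of (ii) do not. Hence
\[
 d_T(Q)\ge |D_P\setminus R|+|U|.
\]

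It remains to show $|U|\ge|D_P\cap R|$. Because $T$ is injective and $E_2\subseteq TS$, the set $T^{-1}(E_2)\cap S$ has exactly $|E_2|=|R|$ elements. It splits into $U$ and $T^{-1}(E_2)\cap R$. An element of $T^{-1}(E_2)$ has its image outside $S$, so it does not lie in $D_P$. Therefore $T^{-1}(E_2)\cap R\subseteq R\setminus D_P$, and
\[
 |U|\ge |R|-|R\setminus D_P|=|R\cap D_P|.
\]
Adding this to the previous bound gives $d_T(Q)\ge|D_P|=d_T(P)$.

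The delicate step is the bookkeeping in the last paragraph. One must use injectivity of $T$ to count $T^{-1}(E_2)$ exactly, and must check that the two families are disjoint. Everything else follows directly from the hypotheses.
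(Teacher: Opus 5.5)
Your proof is correct and follows essentially the same route as the paper. Your families $D_P\setminus R$ and $T^{-1}(E_2)\cap E_1$ are exactly the paper's $E\setminus R$ and $S\setminus R$, where the paper writes $S=T^{-1}(\ext Q\setminus\ext P)$. Your inclusion $T^{-1}(E_2)\cap R\subseteq R\setminus D_P$ is the paper's observation that $E\cap R$ and $S\cap R$ are disjoint subsets of $R$. The only difference is cosmetic: invariance follows at once from $TQ\subseteq TP\subseteq Q$, so your vertex-by-vertex check is unnecessary.
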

\begin{proof}
Since $Q\subseteq P$, we have $TQ\subseteq TP\subseteq Q$, so $Q$ is invariant. Write $V=\ext P$ and $W=\ext Q$. Let
\[
 R=V\setminus W,\qquad U=W\setminus V
\]
be the removed and new vertices. Equal vertex counts give $|R|=|U|$. Put
\[
 E=\{v\in V:Tv\in V\},
\]
so $|E|=d_T(P)$.

An old vertex lying in $Q$ remains a vertex of $Q$: a nontrivial convex decomposition within $Q$ would also be one within $P$. Since $TV\subseteq Q$, no removed vertex belongs to $TV$. Thus $R\cap TV=\varnothing$, and each vertex in $E\setminus R$ still maps to a vertex of $Q$.

Every new vertex belongs to $TV$. By injectivity, its preimage is a unique vertex of $P$. Therefore
\[
 S=T^{-1}U\subseteq V,\qquad |S|=|U|=|R|.
\]
Also $S\cap E=\varnothing$, since vertices in $S$ map to new vertices rather than old ones. Each retained vertex in $S\setminus R$ maps to a vertex in $U$ and is therefore also counted by $d_T(Q)$. Consequently,
\[
 d_T(Q)\ge |E\setminus R|+|S\setminus R|
 =|E|+|R|-|E\cap R|-|S\cap R|
 \ge |E|.
\]
The last inequality follows because $E\cap R$ and $S\cap R$ are disjoint subsets of $R$. Since $P$ and $Q$ have equally many vertices, this gives $b_T(Q)\le b_T(P)$.
\end{proof}

We now return to $Tz=\zeta z$ and the invariant $N$-gons in \eqref{eq:extremal}. Fix their scale by requiring
\[
 \max_{x\in P}|x|=1.
\]
Since $T$ is linear, scaling about zero preserves invariance and the vertex count. The normalized class is compact. Indeed, from any sequence of these polygons, choose a subsequence along which all $N$ vertex coordinates converge in the closed unit disk. Their limiting convex hull remains invariant, contains zero, and has maximum modulus one. The last two properties exclude a single point. Nonreality of $\zeta$ excludes a segment, and minimality of $N$ excludes a polygon with fewer vertices. Thus all $N$ limiting points are distinct vertices, and the limit belongs to the same normalized class.

Throughout this section, choose a normalized polygon $P$ by first minimizing $b_T(P)$ and then minimizing area among the polygons attaining that count. Such a choice exists. Any equality $Tv_i=v_j$ that holds throughout a convergent sequence of vertex coordinates also holds at the limit; additional equalities may appear. Consequently, a limit of polygons minimizing $b_T$ still minimizes $b_T$. These minimizers form a closed subset of the compact normalized class, and continuity of area gives an area minimizer among them. Scaling changes neither $d_T$ nor $b_T$, so the chosen $P$ also minimizes $b_T$ among all invariant $N$-gons, without fixing their scale.

The two minimization conditions have separate uses. Suppose a polytope $Q$ satisfies
\[
 TP\subseteq Q\subseteq P,\qquad
 |\ext Q|=N,\qquad
 \ext Q\subseteq\ext P\cup T\ext P.
\]
Theorem~\ref{thm:branching} gives $b_T(Q)\le b_T(P)$. Since $P$ already minimizes this count, equality follows. If $Q$ also contains a vertex of $P$ with modulus one, it is normalized. A proper inclusion $Q\subsetneq P$ would then give smaller area with the same minimum count, contradicting the choice of $P$. Thus such a $Q$ must equal $P$. The later vertex replacements need only to preserve the minimum of $b_T$.

\subsection{Assigning vertex images to sides}
Write $[a,b]_{\partial P}$ for the boundary arc from $a$ to $b$ in the counterclockwise (positive) direction; parentheses indicate excluded endpoints. The half-open sides $(v_{i-1},v_i]$ partition the boundary: each includes its ending vertex and excludes its starting vertex. Our goal is to arrange that each contains exactly one vertex of $TP$. A contact at a vertex belongs to two closed sides, and this convention assigns it to just one. If necessary, we will reflect $P$ in the real axis and replace $\zeta$ by $\bar\zeta$ to obtain the required assignment.

List the vertices $y_1,\ldots,y_N$ of $TP$ counterclockwise, with indices taken modulo $N$. The boundary-contact theorem puts them on $\partial P$ and ensures that every side of $P$ meets $TP$. For each image edge $[y_j,y_{j+1}]$, let $Q_j$ be the intersection of $P$ with the closed half-plane bounded by the line through its endpoints and containing $TP$. Define
\[
 k_j=|\ext P\cap[y_j,y_{j+1}]_{\partial P}|.
\]
Thus $k_j$ counts the vertices of $P$ on this boundary arc, including any at its endpoints.

If $Q_j\ne P$, the cutting line crosses the interior of $P$. Its two boundary intersections are $y_j,y_{j+1}$, and it discards the counterclockwise open arc between them. Counting the old vertices on the closed arc and adding back the two endpoints gives
\[
 TP\subseteq Q_j\subsetneq P,\qquad
 |\ext Q_j|\le N+2-k_j.
\]
Since $TQ_j\subseteq TP\subseteq Q_j$, the cut polygon is invariant and must have at least $N$ vertices. Hence $k_j\le2$. If $k_j=2$, it has exactly $N$ vertices, and every new vertex is one of the endpoints $y_j,y_{j+1}\in T\ext P$. Theorem~\ref{thm:branching} therefore gives $b_T(Q_j)=b_T(P)$, since $P$ minimizes this count.

Choose a vertex $v_*$ of $P$ with $|v_*|=1$. Since $|y_j|\le\rho<1$, this vertex is not one of the $y_j$ and lies in exactly one open boundary arc $(y_j,y_{j+1})_{\partial P}$. Any proper cut with $k_j=2$ must remove $v_*$. Otherwise, $Q_j$ would still be normalized and minimize $b_T$, but have smaller area than $P$, contradicting its choice. The arcs discarded by different cuts are disjoint, so at most one proper cut has $k_j=2$. If $Q_j=P$, the image-edge line supports $P$ and the arc lies along one side. Its half-open part $(y_j,y_{j+1}]_{\partial P}$ therefore contains at most one vertex of $P$.

To count vertices with the ending endpoint included and the starting endpoint excluded, define
\[
 r_j=|\ext P\cap(y_j,y_{j+1}]_{\partial P}|,
 \qquad
 \varepsilon_j=
 \begin{cases}
  1,&y_j\in\ext P,\\
  0,&y_j\notin\ext P.
 \end{cases}
\]
The half-open arcs partition $\partial P$, so they count each vertex of $P$ exactly once. Also, adding the starting endpoint recovers the closed-arc count. Thus
\[
 \sum_j r_j=N,\qquad k_j=r_j+\varepsilon_j,\qquad 0\le r_j\le2.
\]
If $r_j=2$, the cut must be proper, since an uncut arc contains at most one vertex in its half-open part. It then has $k_j=2$, so at most one $r_j$ can equal two. Since there are $N$ arcs and their counts sum to $N$, either every $r_j=1$, or there are unique indices $s,t$ such that
\[
 r_s=2,\qquad r_t=0,\qquad r_j=1\quad(j\notin\{s,t\}).
\]

Now suppose $r_s=2$ and $r_t=0$. Since $k_s=r_s+\varepsilon_s\le2$, we have $\varepsilon_s=0$: $y_s$ is not a vertex of $P$. We claim that $\varepsilon_{s+1}=1$, so $y_{s+1}$ is a vertex. Otherwise, the two vertices counted by $r_s$ would both lie strictly inside the arc $(y_s,y_{s+1})_{\partial P}$. They are consecutive vertices of $P$, and their joining side contains no vertex of $TP$.

But every side of $P$ meets $TP$. A linear functional defining this side would therefore attain its maximum over $TP$ at a vertex of $TP$, placing an image vertex on the side, a contradiction. Thus $\varepsilon_s=0$ and $\varepsilon_{s+1}=1$: as the indices run around the polygon, the sequence $(\varepsilon_j)$ changes from zero to one at $s$.

We next show that any change from one to zero must occur at $t$. Suppose $\varepsilon_j=1$ and $\varepsilon_{j+1}=0$ with $j\ne t$. Since $\varepsilon_s=0$, we also have $j\ne s$. Hence $r_j=1$ and $k_j=r_j+\varepsilon_j=2$.

This cut must be proper. If $Q_j=P$, the arc would start at the vertex $y_j$ and end at $y_{j+1}$ inside the same side, before that side's ending vertex. Excluding $y_j$ would leave no vertex in $(y_j,y_{j+1}]_{\partial P}$, giving $r_j=0$, a contradiction. Thus $Q_j\ne P$. We would then have two distinct proper cuts, $Q_j$ and $Q_s$, with $k_j=k_s=2$, contradicting the fact that at most one proper cut has count two.

In a sequence of zeros and ones read cyclically, every change from zero to one is matched by a change back to zero. Since a change from one to zero can occur only at $t$, there is exactly one change in each direction: from zero to one at $s$, and from one to zero at $t$. Now include the starting endpoint of each arc and exclude its ending endpoint. This adds $\varepsilon_j$ to the count and subtracts $\varepsilon_{j+1}$, giving
\[
 |\ext P\cap[y_j,y_{j+1})_{\partial P}|
 =r_j+\varepsilon_j-\varepsilon_{j+1}=1
 \quad\text{for every }j.
\]

Thus either every arc $(y_j,y_{j+1}]_{\partial P}$ contains exactly one vertex of $P$, or every arc $[y_j,y_{j+1})_{\partial P}$ does. In either case, call this vertex $u_j$; the vertices $u_j$ occur in positive cyclic order. In the second case, the arc $[y_j,y_{j+1})_{\partial P}$ contains $u_j$, so
\[
 y_j\in(u_{j-1},u_j].
\]
Each side, with its starting vertex excluded and its ending vertex included, therefore contains exactly one image vertex.

In the first case, we instead have $y_{j+1}\in[u_j,u_{j+1})$. Reflect $P$ across the real axis, replacing $P$ by $\overline P$ and $\zeta$ by $\overline\zeta$, and list the reflected vertices in positive cyclic order. Reflection reverses cyclic order, so each assigned side now excludes its starting vertex and includes its ending vertex. It preserves the extremality hypotheses \eqref{eq:extremal}, the count $b_T$, normalization, and area. We continue to write $P,\zeta$ for the resulting polygon and complex number. The map $Tz=\zeta z$ scales distances from the origin by $|\zeta|$ and rotates by $\arg\zeta$.

Multiplication by $\zeta$ preserves cyclic order, so the assignment of image vertices to sides shifts every index by the same amount. Label the vertices of $P$ as $v_i$ in positive cyclic order, and label the side $(v_{i-1},v_i]$ by its ending vertex $v_i$. There is a unique integer $1\le\kappa\le N$ such that the image of $v_{i-\kappa}$ lies on side $i$. Write this image as a convex combination of the side's endpoints:
\begin{equation}\label{eq:contacts}
 c_i=\zeta v_{i-\kappa}
 =\beta_i v_{i-1}+\alpha_i v_i\in(v_{i-1},v_i],
 \qquad \alpha_i=1-\beta_i>0,
\end{equation}
where $0\le\beta_i<1$ and all vertex indices are taken modulo $N$. Thus $\beta_i=0$ precisely when $c_i=v_i$.

If $\kappa=N$, then $c_i=\zeta v_i\ne v_i$, since $|\zeta|<1$ and $v_i\ne0$. Hence every image lies strictly inside its assigned side. Reflecting the polygon and relabelling in positive cyclic order gives the same representation with $\kappa=1$ and with $\alpha_i,\beta_i$ interchanged. We can therefore arrange $1\le\kappa<N$.

Choose real arguments $\Phi_i$ of the vertices that increase with $i$, and extend them to all integer indices by $\Phi_{i+N}=\Phi_i+2\pi$. Because $0$ lies inside $P$, each side spans an angular gap $\gamma_i=\Phi_i-\Phi_{i-1}$ in $(0,\pi)$. Argument increases along that side. Let $\theta_\zeta\in(0,2\pi)$ represent $\arg\zeta$. The side assignment therefore gives
\begin{equation}\label{eq:lift}
 \Phi_{i-1}<\Phi_{i-\kappa}+\theta_\zeta\le\Phi_i,
 \qquad \frac{\kappa-1}{N}<\frac{\theta_\zeta}{2\pi}<\frac\kappa N.
\end{equation}
For the first inequality, no additional multiple of $2\pi$ is needed: both $\theta_\zeta$ and the interval $(\Phi_{i-1}-\Phi_{i-\kappa},\Phi_i-\Phi_{i-\kappa}]$ lie in $(0,2\pi)$. Summing over all $N$ sides gives $2\pi(\kappa-1)<N\theta_\zeta\le2\pi\kappa$. Equality on the right would force $c_i=v_i$ for every $i$, hence $v_i=\zeta v_{i-\kappa}$. Repeatedly following these indices around the polygon would give $v_i=\zeta^L v_i$ for some positive integer $L$, contradicting $v_i\ne0$ and $|\zeta|<1$. This proves the strict upper bound.

\subsection{The local operation and the global normal form}
We modify the polygon by reversing the order of two matrix factors. Suppose $A=HS$, where $H,S$ are row-stochastic and $S$ is invertible. Then
\[
 A'=SH=SAS^{-1}
\]
is also row-stochastic and has the same eigenvalues as $A$, because the matrices are similar. For the vertex list $v=(v_1,\ldots,v_N)^T$, this turns $Av=\zeta v$ into $A'(Sv)=\zeta(Sv)$. We will choose $S$ so that $Sv$ replaces one vertex $v_i$ by a point on the preceding side $[v_{i-1},v_i]$ and leaves the other vertices fixed.

To write the contact identities in matrix form, define $C$ and $B(\beta)$ by their action on a column vector $x$:
\[
 (Cx)_r=x_{r+1},\qquad
 (B(\beta)x)_i=\beta_i x_{i-1}+\alpha_i x_i,
\]
with indices modulo $N$. Thus $C$ shifts the coordinates by one place, while row $i$ of $B(\beta)$ uses the two endpoints of side $i$. Both matrices are row-stochastic. Equation~\eqref{eq:contacts} becomes
\[
 A=C^\kappa B(\beta),\qquad Av=\zeta v.
\]
For the vertex replacement, define $E_i$ by
\[
 (E_i x)_r=
 \begin{cases}
 x_{i-1},&r=i,\\
 x_r,&r\ne i.
 \end{cases}
\]
It copies coordinate $i-1$ into coordinate $i$ and leaves all others fixed, so $E_i^2=E_i$. Set $S_i(t)=(1-t)I+tE_i$. For $0\le t<1$, this matrix is row-stochastic and invertible, and $S_i(t)v$ replaces $v_i$ by $(1-t)v_i+t v_{i-1}$, leaving all other vertices fixed.

Since $0<\alpha_i\le1$, the quantities $w_i=-\log\alpha_i$ are nonnegative. The following lemma sets $\alpha_i'=1$ and replaces $\alpha_{i+\kappa}$ by $\alpha_i\alpha_{i+\kappa}$. In terms of these weights, this is
\[
 w_i'=0,\qquad w_{i+\kappa}'=w_i+w_{i+\kappa}.
\]
Thus the weight at index $i$ is removed and added to the weight at index $i+\kappa$, modulo $N$. Reversing the matrix factors preserves stochasticity and the eigenvalue $\zeta$. The boundary-contact property then forces the vertices used by each changed row to lie on a single polygon side.

\begin{lemma}\label{lem:interchange}
Assume $\beta_i=t>0$ and $\beta_{i+1}=0$. Set $v_i'=c_i$ and $v_j'=v_j$ for $j\ne i$. The polygon $P'=\operatorname{conv}\{v_1',\ldots,v_N'\}$ has $N$ vertices and satisfies $\zeta P'\subseteq P'$. Every image vertex lies on $\partial P'$, and every side of $P'$ meets $\zeta P'$. The images $\zeta v_{j-\kappa}'$ lie on the sides $(v_{j-1}',v_j']$, with new coefficients satisfying
\begin{equation}\label{eq:coalescence}
 \alpha_i'=1,\qquad \alpha_{i+\kappa}'=\alpha_i\alpha_{i+\kappa},
 \qquad \alpha_j'=\alpha_j\quad(j\notin\{i,i+\kappa\}).
\end{equation}
The matrices $C^\kappa B(\beta')$ and $C^\kappa B(\beta)$ are similar, where $\beta_j'=1-\alpha_j'$.
\end{lemma}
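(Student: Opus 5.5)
The plan is to verify the geometric claims directly, using the minimality of $N$ and Theorem~\ref{thm:saturation}, and to read off the new coefficients from the contact identities \eqref{eq:contacts}. The similarity will come from a factorization $C^\kappa B(\beta)=HS_i(t)$.

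\emph{Step 1: invariance and vertex count.} Since $c_i=\beta_i v_{i-1}+\alpha_i v_i\in[v_{i-1},v_i]$, we have $P'\subseteq P$. Next I check $TP\subseteq P'$, i.e.\ every contact $c_j$ lies in $P'$.
\begin{itemize}
\item $c_i$ is itself a vertex of the list.
\item $c_{i+1}=v_{i+1}$, because $\beta_{i+1}=0$.
\item Every other $c_j$ lies on a side $[v_{j-1},v_j]$ whose endpoints are both unchanged.
\end{itemize}
Hence $TP'\subseteq TP\subseteq P'$. So $P'$ is an invariant polygon with at most $N$ vertices, and it is a genuine polygon because $\zeta$ is nonreal. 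Theorem~\ref{thm:saturation} then gives exactly $N$ vertices. Thus the listed points $v_1',\ldots,v_N'$ are all vertices, in the same cyclic order. The same theorem gives boundary contact of every image vertex and contact of every side with $\zeta P'$.

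\emph{Step 2: the new coefficients.} The only image vertex that changes is
\[
\zeta v_i'=\beta_i\zeta v_{i-1}+\alpha_i\zeta v_i=\beta_i c_{i+\kappa-1}+\alpha_i c_{i+\kappa}.
\]
For $j\ne i+\kappa$, the image $\zeta v'_{j-\kappa}$ is an old contact $c_j$. It lies on side $j$ of $P'$ with unchanged coefficients. The one exception is $j=i$: there $c_i=v_i'$, which gives $\alpha_i'=1$.

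For $j=i+\kappa$, I use boundary contact from Step 1. The point $\zeta v_i'$ is a strictly positive combination of $c_{i+\kappa-1}$ and $c_{i+\kappa}$, and it lies on $\partial P'$. By the face fact, both points lie on one side of $P'$. The point $c_{i+\kappa}$ lies in the half-open side $(v_{i+\kappa-1},v_{i+\kappa}]$, and $c_{i+\kappa-1}$ lies in $(v_{i+\kappa-2},v_{i+\kappa-1}]$. So they share a side only if $c_{i+\kappa-1}=v_{i+\kappa-1}$. (When $\kappa=1$ this is automatic, since then $c_{i}=v_i'$.)

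Substituting $c_{i+\kappa}=\beta_{i+\kappa}v_{i+\kappa-1}+\alpha_{i+\kappa}v_{i+\kappa}$ then gives
\[
\zeta v_i'=(1-\alpha_i\alpha_{i+\kappa})v_{i+\kappa-1}+\alpha_i\alpha_{i+\kappa}v_{i+\kappa}.
\]
I still need the endpoints of side $i+\kappa$ to be unchanged, i.e.\ $i\notin\{i+\kappa-1,i+\kappa\}$. This holds for $2\le\kappa<N$. For $\kappa=1$ I treat the case separately: the endpoint $v_i'=c_i$ enters, and the same computation is done with that endpoint. This yields \eqref{eq:coalescence}.

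\emph{Step 3: similarity.} Put $S=S_i(t)$ and let $\beta^0$ agree with $\beta$ except that $\beta^0_i=0$. Then $B(\beta)=B(\beta^0)S$. Row $i$ of this identity is the definition of $c_i$. The other rows agree because $S$ changes only coordinate $i$. That coordinate is read by row $i+1$ of $B(\beta^0)$ only through the $x_{i}$ term, and since $\beta_{i+1}=0$ this term is untouched by $S$. Therefore
\[
A=C^\kappa B(\beta^0)S,\qquad A''=SC^\kappa B(\beta^0)=SAS^{-1},\qquad A''(Sv)=\zeta(Sv),\qquad Sv=v'.
\]
The main obstacle is identifying $A''$ with $C^\kappa B(\beta')$. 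Rather than computing the product directly, I will argue through the eigenvector. By Steps 1 and 2, both $A''$ and $C^\kappa B(\beta')$ satisfy $Mv'=\zeta v'$. Each of their rows expresses $\zeta v_j'$ as a convex combination of the vertices of $P'$. A boundary point of $P'$ has a unique such representation supported on one side. Hence the rows coincide, and $A''=C^\kappa B(\beta')$, which is similar to $C^\kappa B(\beta)$. The delicate point is to check that the row of $A''$ associated with index $i+\kappa$ really uses only the two endpoints of side $i+\kappa$; this is what the face argument in Step 2 supplies.
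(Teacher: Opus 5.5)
Your proof is correct, but it takes a genuinely different route from the paper's in Steps~1 and~3.

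\textbf{The paper's route.} The paper gets invariance of $P'$ directly from the factor swap. With $H=C^\kappa\widehat B$ (your $C^\kappa B(\beta^0)$) and $S=S_i(t)$, the matrix $A'=SH$ is row-stochastic and satisfies $A'v'=\zeta v'$. It then identifies $A'$ by explicit computation. The commutation $S_i(t)C^\kappa=C^\kappa S_{j}(t)$, with $j=i+\kappa$, gives $A'=C^\kappa S_j(t)\widehat B$. Only row $j$ changes, and it has candidate entries at $j-2$, $j-1$, $j$. The face rule applied to $\zeta v_i'\in\partial P'$ kills the entry at $j-2$, leaving the coefficient $(1-t)\alpha_j=\alpha_i\alpha_j$.

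\textbf{Your route.} You instead verify invariance geometrically through $TP'\subseteq TP\subseteq P'$. Here $\beta_{i+1}=0$ enters exactly where the paper uses it in $B=\widehat BS$. You compute the new contacts from $\zeta v_i'=\beta_ic_{i+\kappa-1}+\alpha_ic_{i+\kappa}$. Your face-fact step showing $c_{i+\kappa-1}$ is a vertex is the same fact as the paper's $\widehat\beta_{j-1}=0$. Finally, you identify $SAS^{-1}$ with $C^\kappa B(\beta')$ by uniqueness of convex representations of boundary points, so you avoid the commutation identity entirely.

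\textbf{What each approach buys.} Your uniqueness principle is a reusable observation: once every vertex image lies on $\partial P'$, the stochastic matrix realizing $\zeta$ on the vertex list of $P'$ is unique. You should say explicitly why it applies to $A''$. Uniqueness holds only among representations supported on a single side or a single vertex. It is the face fact together with saturation of $P'$ that forces every row of $A''$ to have such support.

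The paper's computation treats all $\kappa$ uniformly. Your Step~2 needs the separate checks you sketch for $\kappa=1$ and $\kappa=2$. For $\kappa\ge3$, the indices $i+\kappa-2$, $i+\kappa-1$, $i+\kappa$ avoid $i$. The half-open sides you use are then already sides of $P'$, and the argument goes through as written.
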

\begin{proof}
Since $t=\beta_i$ and $\alpha_i=1-\beta_i>0$, we have $0<t<1$. Let $\widehat B$ be $B(\beta)$ with row $i$ replaced by the row that copies coordinate $i$. Thus $\widehat\beta_i=0$, $\widehat\alpha_i=1$, and all other coefficients are unchanged. Set $S=S_i(t)$ and $H=C^\kappa\widehat B$. Multiplying $\widehat B$ on the right by $S$ restores row $i$ of $B$. The only other row that could change is row $i+1$, but $\beta_{i+1}=0$. Hence
\[
 B=\widehat B S,\qquad A=HS,\qquad A'=SH=SAS^{-1}.
\]
The matrices $H,S$ are row-stochastic, and $S^{-1}=(I-tE_i)/(1-t)$. Thus $v'=Sv$ satisfies $A'v'=\zeta v'$, with $v_i'=c_i$ and all other vertices fixed. Row-stochasticity gives $\zeta P'\subseteq P'$. The polygon $P'$ has at most $N$ vertices, so minimality of $N$ forces it to have exactly $N$. These vertices retain their cyclic order, since $v_i$ moves into the side $[v_{i-1},v_i]$. The boundary-contact property established above applies to $P'$.

Put $j=i+\kappa$, with indices modulo $N$. Since $1\le\kappa<N$, we have $j\ne i$. The identity $S_i(t)C^\kappa=C^\kappa S_j(t)$ gives
\[
 A'=C^\kappa B',\qquad B'=S_j(t)\widehat B.
\]
Only row $j$ of $\widehat B$ changes. Its action on a vector $x$ is
\[
 (B'x)_j=t\widehat\beta_{j-1}x_{j-2}
 +\bigl(t\widehat\alpha_{j-1}+(1-t)\beta_j\bigr)x_{j-1}
 +(1-t)\alpha_j x_j.
\]
The coefficients of $x_{j-1}$ and $x_j$ are positive, since $0<t<1$ and every $\widehat\alpha$ is positive. Applying this row to $v'$ expresses the boundary point $\zeta v'_{j-\kappa}$ as a convex combination of vertices of $P'$. By the face rule, all vertices with positive coefficients must lie on one side. The vertices $v'_{j-1},v'_j$ already determine that side, so the coefficient of $v'_{j-2}$ must vanish: a third vertex cannot lie on it. These three vertices are distinct because $N\ge3$. Therefore $\widehat\beta_{j-1}=0$. This includes the case $j-1=i$, where that coefficient is zero by definition.

Row $j$ now uses only coordinates $j-1,j$, and its coefficient at $j$ is $(1-t)\alpha_j=\alpha_i\alpha_j$. Row $i$ still has coefficient one at $i$, and every other row is unchanged from $B$. This proves \eqref{eq:coalescence} and identifies $B'=B(\beta')$. In every row, the coefficient at the ending vertex of the assigned side remains positive. Thus the images retain the same half-open side assignment and shift $\kappa$. Similarity of the corresponding matrices was established above.
\end{proof}

Let
\[
 I=\{i:w_i>0\}=\{i:\beta_i>0\}.
\]
These are the indices whose image vertices lie inside their assigned sides, so $|I|=b_T(P)$. The lemma changes this set to $(I\setminus\{i\})\cup\{i+\kappa\}$. If $i+\kappa$ already belonged to $I$, two positive weights would combine at one index, decreasing $b_T$ by one. This contradicts the choice of $P$ with minimum $b_T$. The next proposition uses the minimum-area condition to show that $I$ consists of consecutive indices around the polygon.

\begin{proposition}\label{prop:interval}
Let $[r]_N$ denote the residue of $r$ in $\{0,\ldots,N-1\}$. By choosing the starting vertex in the cyclic labelling, we can arrange
\[
 I=\{1,\ldots,\varphi\},\qquad \varphi=|I|\ge\delta:=\gcd(N,\kappa).
\]
If $\varphi<N$, there is an integer $h>0$ such that
\begin{equation}\label{eq:record}
 [h\kappa]_N=N-\varphi,
 \qquad [t\kappa]_N<N-\varphi\quad(0\le t<h).
\end{equation}
Thus $[h\kappa]_N$ is a strict record: it exceeds every earlier residue. Its distance from $N$ is $\varphi$. For $\varphi=N$, take $h=0$, whose residue is zero and whose distance from $N$ is $N$.
\end{proposition}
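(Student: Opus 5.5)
The plan is to combine two constraints on the operation of Lemma~\ref{lem:interchange}. Minimality of $b_T$ is a property of every invariant $N$-gon, so it survives any sequence of operations. Minimality of area applies only to the chosen $P$, so it can be used only once, at the start.

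Recall what the operation does. It applies at an index $i$ with $i\in I$ and $i+1\notin I$; call such an $i$ a \emph{right end} of $I$. It moves the weight $w_i$ to the index $i+\kappa$. If $i+\kappa\in I$, two weights coalesce and $b_T$ drops by one, which is impossible. So every right end satisfies $i+\kappa\notin I$, and the operation preserves $|I|$. Moreover, $P'$ is obtained by moving $v_i$ strictly into the side $[v_{i-1},v_i]$, so $P'\subsetneq P$.

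\textbf{Step 1 (nonemptiness and $|I|\ge\delta$).} The map $i\mapsto i+\kappa$ splits $\mathbb Z/N$ into $\delta$ cycles, each of length $N/\delta$. Suppose some cycle misses $I$. Then every contact along that cycle is a vertex, so $v_{i+\kappa}=\zeta v_i$ throughout it. Going once around the cycle gives $v_i=\zeta^{N/\delta}v_i$, which contradicts $v_i\ne0$ and $|\zeta|<1$; this is the same argument used for the strict upper bound in \eqref{eq:lift}. Hence each cycle meets $I$, and $\varphi\ge\delta$.

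\textbf{Step 2 ($I$ is an interval).} If $I$ is neither an interval nor all of $\mathbb Z/N$, it has at least two right ends $i\ne i'$. Apply the operation at either one. Since $i+\kappa\notin I$, the new polygon $P'$ still minimizes $b_T$, and it has strictly smaller area. It therefore contradicts the choice of $P$ unless it fails to be normalized. That happens only if the moved vertex $v_i$ was the \emph{unique} vertex of modulus one. At most one of $v_i,v_{i'}$ can have that property, so the other operation gives a contradiction. Thus $I$ is cyclically consecutive, and relabelling makes $I=\{1,\ldots,\varphi\}$.

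\textbf{Step 3 (the record \eqref{eq:record}).} Now iterate the operation on the single moving weight, using only $b_T$-minimality, which holds for every polygon produced. Start from the right end $\varphi$. After $t$ moves, the weight sits at index $\varphi+t\kappa$. The other positive weights remain at $1,\ldots,\varphi-1$, except that they are blocked or released as the gap changes.
\begin{itemize}
\item The gap indices are $\varphi+1,\ldots,N$. They correspond exactly to the residues $[t\kappa]_N\in\{1,\ldots,N-\varphi\}$, via the index $\varphi+[t\kappa]_N$.
\item As long as the current index $j$ has $j+1\notin I$, the operation is applicable. It must not land in $I$, since that would give a coalescence. So every residue reached by the walk lies in $\{0,\ldots,N-\varphi\}$.
\item The walk stops only when the moving weight is immediately followed by an occupied index. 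Since the rest of $I$ is $\{1,\ldots,\varphi-1\}$, this means the weight sits at index $N\equiv0$, followed by index $1$. That is the residue $N-\varphi$.
\end{itemize}
The walk $t\kappa\bmod N$ is periodic and returns to $0$. Before returning, it must therefore stop at $N-\varphi$; otherwise it would eventually be forced into $I$. So there is a first $h>0$ with $[h\kappa]_N=N-\varphi$, and all earlier residues are at most $N-\varphi$. Strictness of $[t\kappa]_N<N-\varphi$ for $t<h$ holds by the choice of $h$ as the first hitting time.

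\textbf{Main obstacle.} I expect Step 3 to be the difficult part. The issue is the bookkeeping that the walk never lands in $I$ before reaching $N-\varphi$. This includes the implied divisibility $\delta\mid\varphi$. Each intermediate polygon satisfies the hypotheses of Lemma~\ref{lem:interchange}, with boundary contact supplied by Theorem~\ref{thm:saturation}, but it is no longer area-minimal. So the argument must rely solely on $b_T$-minimality, and I would check this carefully. The case $\varphi=N$ is the convention $h=0$.
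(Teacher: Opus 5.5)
Your Steps~1 and~2 are the paper's argument. In Step~1, each residue class modulo $\delta$ meets $I$. In Step~2, two run ends allow an area-decreasing replacement at whichever end is not the unique modulus-one vertex. Step~3 follows the same route as the paper: you push the weight from the run's end along $+\kappa$, using only $b_T$-minimality.

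The gap is the final inference of Step~3. From ``the walk is periodic and returns to $0$'' you conclude that it must stop at $N-\varphi$, ``otherwise it would eventually be forced into $I$''. That does not follow. Residue $0$ is the index $\varphi$, which the moving weight has already vacated, so returning there is neither a coalescence nor any other contradiction. If the orbit $\{[t\kappa]_N\}$ never met the window $W=\{N-\varphi,\ldots,N-1\}$, the lemma would remain applicable forever and the walk would simply cycle. Arithmetically this does happen when $\varphi<\delta$, since $W$ then contains no multiple of $\delta$. So you need an argument that the orbit reaches $W$, and your proposal never supplies one. Your remark about needing $\delta\mid\varphi$ points at the issue but reverses it: $\delta\mid\varphi$ is a consequence of $[h\kappa]_N=N-\varphi$, not an input.

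The missing step uses the bound you already proved in Step~1. The window $W$ consists of $\varphi\ge\delta$ consecutive integers, so it contains a multiple of $\delta$, and repeatedly adding $\kappa$ from $0$ visits every multiple of $\delta$. Define $h$ as the first positive time with $[h\kappa]_N\in W$. For $t<h$ the residue is below $N-\varphi$, so the next index is unoccupied and the lemma applies. Entering $W$ anywhere other than $N-\varphi$ would land on one of the fixed weights at $1,\ldots,\varphi-1$ and decrease $b_T$. Hence $[h\kappa]_N=N-\varphi$, with all earlier residues strictly smaller.

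Defining $h$ as a first entrance time, rather than as the place where the walk stops, also covers $\varphi=1$. There is no blocking weight in that case, so your description of the walk stopping at index $N$ does not apply. The statement then reduces to the fact that $\delta=1$, so the orbit reaches $N-1$.
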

\begin{proof}
Repeatedly adding $\kappa$ modulo $N$ visits all $N/\delta$ indices in one residue class modulo $\delta$. Each such class must meet $I$. Otherwise, all its contact coefficients $\beta_j$ would be zero, so the identities $\zeta v_r=v_{r+\kappa}$ would give $v_r=\zeta^{N/\delta}v_r$. This contradicts $v_r\ne0$ and $|\zeta|<1$. There are $\delta$ classes, so $|I|\ge\delta$.

If $I$ contains every index, the proposition follows with $\varphi=N$ and $h=0$. Otherwise, consider the ending index $i$ of any maximal run of consecutive indices in $I$: $i\in I$ but $i+1\notin I$. Then $\beta_i>0$ and $\beta_{i+1}=0$, so Lemma~\ref{lem:interchange} applies. It replaces $v_i$ by $c_i$, strictly inside the preceding side, and gives a proper subpolygon. Its count $b_T$ cannot increase by the update formula, and cannot decrease by minimality. If necessary, rescaling restores normalization without changing this count.

Apply this replacement separately at each run's ending index in the original polygon $P$. Since $|c_i|\le\rho<1$, any modulus-one vertex other than $v_i$ would remain. The new polygon would then still be normalized and minimize $b_T$, but have smaller area, contradicting the choice of $P$. Thus each run's ending vertex would have to be the unique modulus-one vertex. There can therefore be only one run. Choosing its first index as $1$ gives $I=\{1,\ldots,\varphi\}$.

To prove the record assertion, assume $\varphi<N$ and temporarily label this run as $F\cup\{0\}$, where $F=\{N-\varphi+1,\ldots,N-1\}$. The indices in $F$ carry the other positive weights $w_j$, which remain unchanged while the weight initially at index $0$ is moved. Set $a_t=[t\kappa]_N$, and let $h$ be the first positive time at which $a_h\in\{N-\varphi,\ldots,N-1\}$. This time exists: that interval contains $\varphi\ge\delta$ consecutive residues, hence a multiple of $\delta$, and repeatedly adding $\kappa$ from zero visits every such multiple.

Until time $h$, move the positive weight initially at zero by successive applications of the lemma. If its current index is $a_t$, the positive-weight indices are $F\cup\{a_t\}$. Since $a_t<N-\varphi$, the next index $a_t+1$ is not in this set, so the lemma applies at $a_t$. It moves the weight to $a_{t+1}$ and leaves the weights at $F$ fixed. If $a_{t+1}\in F$, two positive weights would combine, decreasing $b_T$ by one and contradicting minimality. Otherwise, the new positive-weight set is $F\cup\{a_{t+1}\}$. These later polygons need only to preserve the minimum of $b_T$; they need not minimize area.

At the first entrance into $\{N-\varphi,\ldots,N-1\}$, every index except $N-\varphi$ belongs to $F$ and is therefore forbidden. Hence $a_h=N-\varphi$, while $a_t<N-\varphi$ for every $t<h$. This proves the claimed record inequalities. Returning to the original cyclic labelling does not change these displacements by multiples of $\kappa$.
\end{proof}


\Needspace{12\baselineskip}
\section{Return geometry}
\subsection{Arithmetic suspension}\label{sec:return-arithmetic}
An endpoint contact, where $\beta_{i+\kappa}=0$, gives $\zeta v_i=v_{i+\kappa}$. Start at an index $j\in I$ and repeatedly add $\kappa$ modulo $N$ until the first return to $I$, at index $k$. The intermediate destinations lie outside $I$, so the endpoint identities apply at each of those steps. At the final step, the destination is in $I$, giving $\zeta^H v_j=c_k$, an interior contact on the side ending at $v_k$. We call the list of indices before this return a tower, and the number of steps $H$ its height. The starting index is its base, and each subsequent index is another level. For example, with $N=8$, $\kappa=3$, and $I=\{1,2\}$, the path $1\to4\to7\to2$ first returns to $I$ after three steps. Its tower consists of $1,4,7$, has height $3$, and closes with $\zeta^3v_1=c_2$.

We also retain the integer index $j+t\kappa$ before reducing it modulo $N$. An increase by $N$ records a full turn in the cyclic vertex order. Combining the endpoint identities along a tower gives a power of $\zeta$; its height and the number of turns will enter the product equation. We first determine these paths using only the index arithmetic.

The arithmetic follows the residues $a_t=[t\kappa]_N$. A record residue exceeds every earlier value in this sequence; the initial value $a_0=0$ is also a record. Its deficit $\nu=N-a_t$ is its gap to $N$. For $N=8$ and $\kappa=3$, the sequence begins $0,3,6,1,4,7,\ldots$, with records $0,3,6,7$ and deficits $8,5,2,1$. The preceding proposition shows that $\varphi$ is one of these deficits. We will use consecutive records to determine the tower heights and return destinations.

The calculation with record residues below reformulates Karpelevi\v{c}'s analysis of Euclidean remainders \cite[\S3, Lemma~2 and Theorems~III--IV]{Karp}. Slater studies related gaps and return times for rotations \cite{Slater}, and Hardy and Wright provide the background on Farey fractions \cite[Chapter~III]{HardyWright}. We prove here how the towers cover all $N$ indices, specifying which interval endpoints are included.

Fix $1\le\kappa<N$ and $\delta=\gcd(N,\kappa)$. Define $L:\mathbb Z^2\to\mathbb Z$ by $L(a,b)=a\kappa-bN$. For each record time $h$, let $\nu=N-[h\kappa]_N$ be its deficit and encode the record by $V=(h,b)$, where
\[
 b=\begin{cases}
 1,&h=0,\\
 \lceil h\kappa/N\rceil,&h>0.
 \end{cases}
\]
This choice gives $L(V)=-\nu$, including the initial record $V=(0,1)$ with $\nu=N$. The attainable residues are the multiples of $\delta$, whose largest value is $N-\delta$, so the last record has deficit $\delta$.

\begin{lemma}\label{lem:record-basis}
Let $V=(h,b)$ and $V'=(h',b')$ encode two consecutive records, with deficits $\nu>\nu'$. Set $U=V'-V=(q,p)$ and $\Delta=\nu-\nu'$. Then
\begin{equation}\label{eq:record-basis}
 \det(U,V)=1,\quad L(U)=\Delta,\quad L(V)=-\nu,
 \quad q\nu+h\Delta=N,\quad \gcd(\Delta,\nu)=\delta.
\end{equation}
Here $\det(U,V)=qb-ph$. Thus every pair of integers can be written uniquely as an integer combination of $U$ and $V$. In particular, $q>0$ and $0<\Delta<\nu$.
\end{lemma}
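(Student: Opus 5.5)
The plan is to treat the determinant identity $\det(U,V)=1$ as the heart of the lemma; every other item in \eqref{eq:record-basis} is bookkeeping once it is known.

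\textbf{Bookkeeping identities.} Since $L$ is linear and $L(V)=-\nu$, $L(V')=-\nu'$, we get $L(U)=L(V')-L(V)=\nu-\nu'=\Delta$ at once. Records occur at strictly increasing times, so $q=h'-h>0$. Since $\nu>\nu'$, we have $\Delta>0$, and $\nu'\ge\delta>0$ gives $\Delta<\nu$.

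Next I would expand $q\nu+h\Delta=(h'-h)\nu+h(\nu-\nu')=h'\nu-h\nu'$. Substituting $\nu=bN-h\kappa$ and $\nu'=b'N-h'\kappa$, the $\kappa$-terms cancel and leave
\[
 q\nu+h\Delta=N(h'b-hb')=N\det(U,V),
\]
because $qb-ph=(h'-h)b-(b'-b)h=h'b-hb'$. This reduces the fourth identity to the first. It also gives $\det(U,V)\ge1$ immediately, since $q\ge1$, $\nu>0$, $h\ge0$ and $\Delta>0$.

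\textbf{The determinant.} I would describe the passage from one record to the next. Using $a_{h+t}\equiv a_h+a_t\pmod N$, the residue $a_{h+t}$ exceeds $a_h$ exactly when $0<a_t<\nu$; if $a_t\ge\nu$, adding it wraps below $a_h$. Hence $q$ is the least positive time with $0<a_q<\nu$, and then $\Delta=a_q$ and $p=\lfloor q\kappa/N\rfloor$. With this characterization, the cleanest route is to show $q\nu+h\Delta\le N$, or equivalently to exclude $d=\det(U,V)\ge2$.

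If $d\ge2$, the half-open parallelogram $\{xU+yV:0\le x,y<1\}$ contains a nonzero lattice point $W=xU+yV$. Its time coordinate is $t=xq+yh$, with $0\le t<h'$, and $L(W)=x\Delta-y\nu\in(-\nu,\Delta)$. I would split on the sign of $L(W)$.

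If $L(W)>0$, then $a_t=L(W)\in(0,\Delta)\subset(0,\nu)$ at a time $t<h'$. When $t<q$, this contradicts the minimality of $q$. When $t\ge q$, I would subtract $U$ or $V$ to produce an earlier offending time.

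If $L(W)<0$, then $a_t=N+L(W)>N-\nu=a_h$ at a time $t<h'$. When $t>h$, this is a residue between the two consecutive records, exceeding the record at $h$ before time $h'$, which contradicts consecutiveness. When $t<h$, it contradicts the record property of $h$ itself.

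The boundary cases $x=0$ or $y=0$ need primitivity of $U$ and $V$. I would obtain this by running the whole argument as an induction along the records, starting from $V=(0,1)$ and the first $U$. The induction gives that each new $V'=U+V$ is primitive and that $\det(U',V')=1$ is inherited.

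\textbf{The gcd and the main obstacle.} Given $\det(U,V)=1$, the vectors $U$ and $V$ form a basis of $\mathbb Z^2$. Therefore $L(\mathbb Z^2)=\kappa\mathbb Z+N\mathbb Z=\delta\mathbb Z$ is generated by $L(U)=\Delta$ and $L(V)=-\nu$, which gives $\gcd(\Delta,\nu)=\delta$. The same basis property yields the stated unique representation of integer pairs.

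The main obstacle I expect is the case analysis excluding an extra lattice point in the fundamental parallelogram, specifically reducing the time $t$ into the right window by subtracting $U$ or $V$ without leaving the lattice. If that becomes awkward, I would instead prove $\det(U,V)=1$ purely by induction on consecutive record pairs, in the style of Euclid's algorithm and Karpelevi\v{c}'s remainders. That route shows the next $U$ is $U$ itself or $U+V$, both of which preserve the determinant.
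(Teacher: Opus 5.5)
Your argument works, but it takes a different route from the paper's.

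\textbf{The paper's route.} The paper never characterizes $q$. It first shows that each record vector is primitive: a factorization $V=gW$ would give a time $h/g<h$ with residue $N-\nu/g>N-\nu$. It then uses B\'ezout to write $V'=aV+DW_0$ with $D=-\det(V,V')=\det(U,V)$ and $\gcd(a,D)=1$. For $D>1$ it builds the explicit integer point $W=\lceil a/D\rceil V+W_0=\alpha V+\beta V'$ with $\alpha,\beta\in(0,1)$, reflected to $V+V'-W$ if $\alpha+\beta>1$. This point lies in the triangle $0,V,V'$, so its time is in $(0,h')$ and its deficit in $(0,\nu)$. A single contradiction with the record structure then suffices.

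\textbf{Your route.} You instead take a lattice point in the half-open parallelogram on $U,V$ from the index count. That is a standard fact you would need to quote or prove, whereas the paper constructs the point. The larger region costs you the first-entrance characterization of $q$ and a case split. In exchange, primitivity comes for free. The edge $x=0$ gives $L(W)=-y\nu<0$ at the time $yh<h$. The edge $y=0$ gives $a_{xq}=x\Delta\in(0,\nu)$ at a time $0<xq<q$. Both already fall under your two cases, so the proposed induction along records can be dropped. You also left unjustified its claim that $\det(U',V')=1$ is inherited.

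\textbf{Two details to fill in.}
\begin{itemize}
\item In the subcase $L(W)>0$, $t\ge q$, the subtraction must be by $U$. The point $W-U$ has time $t-q<h$ and $L(W-U)=L(W)-\Delta\in(-\Delta,0)$. Its residue therefore exceeds $N-\Delta>N-\nu=a_h$, contradicting the record at $h$.
\item You omit the case $L(W)=0$. If $t=0$, then $W=0$, which is excluded. If $0<t<h'$, then $a_t=0$, so the residue sequence is $t$-periodic. No record can then occur at time $t$ or later, contradicting $t<h'$.
\end{itemize}
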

\begin{proof}
The attainable residues are the multiples of $\delta$, so the last deficit is $\delta$. Each record vector $V=(h,b)$ is primitive, meaning $\gcd(h,b)=1$. This holds for the initial vector $(0,1)$. Otherwise, if $V=gW$ for an integer $g>1$, the integer vector $W$ would have time $h/g<h$ and positive deficit $\nu/g<\nu$. Its residue would exceed the record at time $h$, a contradiction.

Since $h'>h$ and $\nu>\nu'>0$, the integer
\[
 D=-\det(V,V')=\frac{h'\nu-h\nu'}{N}
\]
is positive. Suppose $D>1$. B\'ezout's identity says that, because $\gcd(h,b)=1$, there are integers $r,s$ with $hr+bs=1$. Set $W_0=(s,-r)$. Then $\det(V,W_0)=-hr-bs=-1$, so every integer vector has unique integer coordinates in the basis $(V,W_0)$. Write $V'=aV+DW_0$. Since $V'$ is primitive, $\gcd(a,D)=1$. Consequently
\[
 W=\left(\lceil a/D\rceil-a/D\right)V+D^{-1}V'
   =\lceil a/D\rceil V+W_0
\]
is an integer vector of the form $W=\alpha V+\beta V'$ with $0<\alpha,\beta<1$. If $\alpha+\beta>1$, replace $W$ by $V+V'-W$; the new coefficients still lie in $(0,1)$ and have sum at most one. The time $t$ and deficit of $W$ then satisfy
\[
 0<t=\alpha h+\beta h'<h',\qquad
 0<-L(W)=\alpha\nu+\beta\nu'<\nu.
\]
Thus a positive integer time before $h'$ would have a residue exceeding the record at $h$. This is impossible for consecutive records. Hence $D=1$, and $\det(U,V)=-\det(V,V')=1$.

By linearity, $L(U)=L(V')-L(V)=\Delta$, while $L(V)=-\nu$ by definition. Expanding the determinant gives
\[
 q\nu+h\Delta
 =q(bN-h\kappa)+h(q\kappa-pN)
 =N(qb-ph)=N.
\]
Since $U,V$ form an integer basis, the values of $L$ are exactly the integer combinations of $\Delta$ and $\nu$. By its definition, they are also the integer combinations of $\kappa$ and $N$. Therefore $\gcd(\Delta,\nu)=\gcd(\kappa,N)=\delta$. Finally, $q=h'-h>0$ and $\Delta=\nu-\nu'$ lies in $(0,\nu)$.
\end{proof}

The identity $q\nu+h\Delta=N$ can be written
\[
 (\nu-\Delta)q+\Delta(q+h)=N,
\]
suggesting $\nu-\Delta$ towers of height $q$ and $\Delta$ towers of height $q+h$. The congruences specify where these towers are expected to return. To show that their lists partition all $N$ indices, we must also exclude repetitions. The next lemma does this by following cycles of the proposed return map on the tower bases: a cycle ends when repeated returns reach the starting base again. The argument uses only the stated integer relations, so it also applies later to a different choice of bases.

\Needspace{19\baselineskip}
\begin{lemma}\label{lem:suspension}
Let $\nu,\Delta,q$ be positive integers and $h$ a nonnegative integer. Assume $1\le\Delta\le\nu\le N$ and
\[
 \begin{gathered}
 q\kappa\equiv\Delta\pmod N,\qquad h\kappa\equiv-\nu\pmod N,\\
 q\nu+h\Delta=N,\qquad \gcd(\Delta,\nu)=\gcd(\kappa,N)=\delta.
 \end{gathered}
\]
Take $J=\{1,\ldots,\nu\}$ as the starting indices, interpreted modulo $N$. For each $j\in J$, define a tower height $H_j$ and a return index $\sigma(j)$ by
\[
 H_j=\begin{cases}
 q,&1\le j\le\nu-\Delta,\\
 q+h,&\nu-\Delta<j\le\nu,
 \end{cases}
 \qquad
 \sigma(j)=\begin{cases}
 j+\Delta,&1\le j\le\nu-\Delta,\\
 j+\Delta-\nu,&\nu-\Delta<j\le\nu.
 \end{cases}
\]
Thus $\sigma(j)$ is addition by $\Delta$ modulo $\nu$, with values in $J$. The map
\[
 \pi(t,j)=[j+t\kappa]_N,\qquad j\in J,\quad t=0,\ldots,H_j-1,
\]
lists each residue in $\{0,\ldots,N-1\}$ exactly once. Moreover, starting from $j$, repeated addition by $\kappa$ modulo $N$ first returns to $J$ after $H_j$ steps, at index $\sigma(j)$. The towers therefore partition all $N$ indices, with heights and first returns as displayed.
\end{lemma}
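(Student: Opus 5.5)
The plan is to reduce everything to a counting statement plus injectivity of $\pi$. First, the total number of pairs $(t,j)$ is
\[
 (\nu-\Delta)q+\Delta(q+h)=q\nu+h\Delta=N .
\]
So it suffices to prove that $\pi$ is injective; it is then a bijection onto $\{0,\ldots,N-1\}$.

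Next I check the landing positions using the two congruences. For $1\le j\le\nu-\Delta$ we have $j+q\kappa\equiv j+\Delta=\sigma(j)$. For $\nu-\Delta<j\le\nu$ we have $j+(q+h)\kappa\equiv j+\Delta-\nu=\sigma(j)$. In both cases the landing index lies in $J$. The first-return claim then follows from injectivity. If $j+t\kappa\equiv j'\in J$ for some $0<t<H_j$, the residue of $j'$ would be listed twice, as $\pi(t,j)$ and as $\pi(0,j')$. Hence the walk avoids $J$ strictly before time $H_j$, and the towers partition the indices with the displayed heights and returns.

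For injectivity, I follow the cycles of $\sigma$ as the text before the lemma suggests. Since $\sigma$ is addition by $\Delta$ modulo $\nu$ and $\gcd(\Delta,\nu)=\delta$ (so $\delta\mid\nu$), each cycle has length $\nu/\delta$ and consists of the elements of $J$ in one residue class modulo $\delta$.

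Concatenate the towers along a cycle $j,\sigma(j),\sigma^2(j),\ldots$. Because each tower ends exactly where the next base begins, the listed residues are precisely $[j+t\kappa]_N$ for $t=0,1,\ldots,T-1$, where $T$ is the sum of the heights around the cycle. To compute $T$, note that over one full cycle the additions of $\Delta$ total $(\nu/\delta)\Delta$. This equals $\nu\cdot(\Delta/\delta)$, so exactly $\Delta/\delta$ of the steps wrap past $\nu$, and these are exactly the bases of height $q+h$. Hence
\[
 T=\frac{\nu}{\delta}q+\frac{\Delta}{\delta}h=\frac N\delta .
\]
The orbit of $j$ under addition of $\kappa$ modulo $N$ has exact period $N/\delta$, because $\gcd(\kappa,N)=\delta$. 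So within one $\sigma$-cycle all listed residues are distinct.

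For two different $\sigma$-cycles, the bases lie in different classes modulo $\delta$. Adding $\kappa$ modulo $N$ preserves the class modulo $\delta$, since $\delta$ divides both $\kappa$ and $N$. So residues from different cycles cannot coincide either.

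The step needing the most care is the concatenation bookkeeping. One must confirm that reaching $\sigma(j)$ at time $H_j$ and restarting the count there really reproduces the single walk $j+t\kappa$ with cumulative time. One must also confirm that the wrap count is exactly $\Delta/\delta$, so that the cycle time equals the true period $N/\delta$ rather than a multiple of it. The case $h=0$ (with $\Delta=\nu$ possible) fits the same computation without change.
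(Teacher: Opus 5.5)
Your proposal is correct and follows the paper's proof essentially step for step: landing congruences at the top of each tower, concatenation of the towers along each $\sigma$-cycle into a walk of $\frac{\nu}{\delta}q+\frac{\Delta}{\delta}h=N/\delta$ positions matching the period of addition by $\kappa$ modulo $N$, separation of different cycles by residue class modulo $\delta$, and first returns deduced from the absence of repetitions. The only cosmetic difference is how you get $\Delta/\delta$ tall towers per cycle. You telescope the wrap-arounds, whereas the paper observes that the last $\Delta$ bases contain exactly $\Delta/\delta$ members of each class modulo $\delta$.
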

\begin{proof}
First treat the pairs $(t,j)$ as distinct positions in the towers, before checking whether their images under $\pi$ repeat. From $(t,j)$ move to $(t+1,j)$ if $t<H_j-1$, and from the top $(H_j-1,j)$ move to the next base $(0,\sigma(j))$. Each move adds $\kappa$ modulo $N$ under $\pi$. This is immediate inside a tower. At its top, the required relation is
\[
 j+H_j\kappa\equiv\sigma(j)\pmod N.
\]
For $j\le\nu-\Delta$, it follows from $q\kappa\equiv\Delta$. For $j>\nu-\Delta$, it follows from $(q+h)\kappa\equiv\Delta-\nu$ and $\sigma(j)=j+\Delta-\nu$.

Since $\gcd(\Delta,\nu)=\delta$, repeated application of $\sigma$ visits every base in one residue class modulo $\delta$. There are $\delta$ such cycles, each containing $\nu/\delta$ bases. The last $\Delta$ bases contain exactly $\Delta/\delta$ members of each class. Thus, joining the towers according to one base cycle gives a cycle of
\[
 q\frac{\nu}{\delta}+h\frac{\Delta}{\delta}=\frac N\delta
\]
positions. Addition by $\kappa$ modulo $N$ also has period $N/\delta$. The images of these positions therefore visit every residue in one class modulo $\delta$, without repetition. Different base cycles give different classes, so $\pi$ lists every residue exactly once.

For $1\le t<H_j$, the residue $\pi(t,j)$ cannot belong to $J$: it would already occur at the base position $(0,k)$ for some $k\in J$, contradicting the absence of repetitions. After $H_j$ steps, the top relation gives the return index $\sigma(j)$. Hence $H_j$ is the first return time and $\sigma(j)$ its destination, including when $\nu=\Delta=1$.
\end{proof}

Recall that the deficit of a record at time $h$ is its gap $N-[h\kappa]_N$ to $N$. By Proposition~\ref{prop:interval}, $\varphi$ is such a deficit. Assume $\varphi>\delta$, so the record has a successor. Let $V=(h,b)$ be its record vector, $V'$ the next one, and $U=V'-V=(q,p)$. Let $\Delta$ be the drop in deficit between these records. Lemma~\ref{lem:record-basis} proves that these data satisfy
\begin{equation}\label{eq:tower-data}
 q\kappa-pN=\Delta,\qquad h\kappa-bN=-\varphi,
 \qquad q\varphi+h\Delta=N,\qquad
 1\le\Delta<\varphi,\quad\gcd(\Delta,\varphi)=\delta.
\end{equation}
This includes $\varphi=N$: the initial record is $V=(0,1)$ and the next is $V'=(1,1)$.

Reducing the first two identities in \eqref{eq:tower-data} modulo $N$ gives
\[
 q\kappa\equiv\Delta\pmod N,\qquad h\kappa\equiv-\varphi\pmod N.
\]
Together with the remaining relations, $q>0$, $h\ge0$, and $\varphi\le N$, these verify the hypotheses of Lemma~\ref{lem:suspension} with $\nu=\varphi$ and bases $J=I=\{1,\ldots,\varphi\}$. We can therefore apply that lemma to obtain the tower partition and first returns.

Every intermediate destination $k$ lies outside $I$, so $\beta_k=0$ and the endpoint identity applies. At the first return, $\sigma(j)\in I$ and $\beta_{\sigma(j)}>0$, giving the interior contact $c_{\sigma(j)}$ on the side ending at $v_{\sigma(j)}$. For each base $j\in I$, these steps give
\begin{equation}\label{eq:towers}
 \zeta^t v_j=v_{\pi(t,j)}\ (0\le t<H_j),\qquad
 \zeta^{H_j}v_j=c_{\sigma(j)},\qquad
 \zeta^h v_\varphi=v_0,\quad \zeta^qv_0=c_\Delta.
\end{equation}
Here $v_0=v_N$. The tower starting at $\varphi$ has height $H_\varphi=q+h$ and returns to $\sigma(\varphi)=\Delta$. Since $\varphi+h\kappa\equiv0\pmod N$, its level $h$ is $\pi(h,\varphi)=0$. As $q>0$, this level occurs before the return. The first $h$ steps therefore reach $v_0$, and the remaining $q$ steps end at $c_\Delta$, proving the last two identities.

Continue the example $N=8$, $\kappa=3$. The record at $h=2$ has residue $6$ and deficit $\varphi=2$. The next record, at time $5$, has residue $7$ and deficit $1$. Thus $q=5-2=3$ and $\Delta=2-1=1$, giving the towers
\[
\begin{array}{c|c|c|c}
\text{base }j&H_j&\text{indices before return}&\sigma(j)\\ \hline
1&3&1,4,7&2\\
2&5&2,5,0,3,6&1
\end{array}
\]
Each residue occurs once. If an invariant polygon has these contact data, its tower identities give $\zeta^3v_1=c_2$ and $\zeta^5v_2=c_1$. The second tower also gives $\zeta^2v_2=v_0$. We return to this example after Theorem~\ref{thm:product} to derive its polynomial equation and the condition tracking the accumulated argument. Existence of a polygon with these contact data still requires a geometric argument.

Reducing indices modulo $N$ identifies the vertices but loses the multiples of $2\pi$ in their continued arguments. To retain these full turns, we use the real vertex arguments from \eqref{eq:lift}, with $\Phi_{i+N}=\Phi_i+2\pi$. If a path starts at $v_a$ and its next $t$ images are endpoints, then
\begin{equation}\label{eq:path-lift}
 \Phi_a+t\theta_\zeta=\Phi_{a+t\kappa}.
\end{equation}
If the last image is inside its assigned side and all previous images are endpoints, this becomes
\[
 \Phi_{a+t\kappa-1}<\Phi_a+t\theta_\zeta<\Phi_{a+t\kappa}.
\]
Here $a+t\kappa$ is kept as an integer, without reduction modulo $N$. Joining endpoint paths adds their lengths and index advances. When we combine their endpoint identities into one equation involving a power of $\zeta$, we retain the real argument identity as well. This preserves the total argument change, including its multiples of $2\pi$, which are needed to select the boundary root.


\subsection{Face persistence and projective transfer}

The base vertices are $v_j$ with $j\in I=\{1,\ldots,\varphi\}$, listed in cyclic order as $v_1,\ldots,v_\varphi,v_1$. The first return from $v_j$ lands at the interior contact $c_{\sigma(j)}$ on the side ending at $v_{\sigma(j)}$. Here $\sigma(j)=j+\Delta$ modulo $\varphi$, with values in $I$. When $\Delta=1$, this is the next base in the list. When $\Delta>1$, the return skips the $\Delta-1$ bases strictly between its starting and return indices; these are the intervening bases. For example, if $\varphi\ge3$, a return from $v_1$ to the side ending at $v_3$ skips the intervening base $v_2$.

We follow Karpelevi\v{c}'s strategy of moving polygon vertices to exclude such returns \cite{Karp}. Under our extremality assumptions, Theorem~\ref{thm:saturation} states that every invariant $N$-gon $Q$, with $\zeta Q\subseteq Q$, has every vertex image $\zeta u$ on $\partial Q$, and every side of $Q$ meets $\zeta Q$. These are the boundary-contact properties used here. The intended motion keeps $\zeta$ fixed and preserves $\zeta Q\subseteq Q$, but puts one vertex image in $\operatorname{int}Q$, contradicting the first property.

The motion is built from projections between lines, each following lines through a fixed point. We first need supporting lines: lines that meet the polygon and leave it entirely in one closed half-plane. A supporting line that meets the polygon at just one vertex is said to expose that vertex. We obtain these lines from the behavior of faces under repeated images. The required projection formulas are derived below; Coxeter \cite{Coxeter} gives the background on these projections and their compositions.

A polygon's nonempty faces are its vertices, viewed as single points, its closed sides, and the whole polygon. For $x\in P$, the smallest face $G_P(x)$ is the face contained in every other face that contains $x$. Thus
\[
 G_P(x)=\begin{cases}
 \{x\},&x\text{ is a vertex},\\
 [v_{i-1},v_i],&x\in(v_{i-1},v_i),\\
 P,&x\in\operatorname{int}P.
 \end{cases}
\]
These faces have dimensions zero, one, and two, respectively; ``smallest'' refers to containment.

Let $F(x)=Mx+a$ be a one-to-one affine map, where $M$ is a matrix and $a$ a fixed vector, with $F(P)\subseteq P$. Then
\[
 F(G_P(x))\subseteq G_P(Fx),\qquad
 \dim G_P(x)\le\dim G_P(Fx).
\]
If $Fx$ is interior, the inclusion follows immediately from $G_P(Fx)=P$. Otherwise, take a supporting linear functional $\ell$ maximal at $Fx$. Since $F(P)\subseteq P$, we have $\ell(Fy)\le\ell(Fx)$ for every $y\in P$. Thus $\ell\circ F$ is maximal at $x$. Because $x$ lies in the interior of $G_P(x)$ relative to that face, this affine functional must be constant on the whole face. Consequently $F(G_P(x))$ lies in every supporting face containing $Fx$, and hence in their intersection, $G_P(Fx)$. Since $F$ is one-to-one, it preserves the dimension of $G_P(x)$, giving the dimension inequality.

In particular, a point that is not a vertex cannot become a vertex under any later application of $F$: its containing face cannot drop to dimension zero. A point strictly inside a side may move into the polygon's interior, but cannot move to a vertex. The next lemma combines this fact with the half-open side assignment to identify which endpoint must remain a vertex when an entire side stays on the boundary.

\begin{lemma}\label{lem:boundary-persistence}
Let $F$ be an invertible affine map with $F(P)\subseteq P$ and the half-open side assignment \eqref{eq:contacts}, namely
\[
 Fv_i\in(v_{i+\kappa-1},v_{i+\kappa}]
 \qquad(i\bmod N).
\]
Write $E_i=[v_{i-1},v_i]$. If an integer $r\ge1$ satisfies $F^r(E_i)\subseteq\partial P$, then the starting endpoint remains a vertex at every step:
\[
 F^t v_{i-1}=v_{i-1+t\kappa}\qquad(0\le t\le r).
\]
Also, suppose $F^rv_i$ lies strictly between the endpoints of a side whose containing line is $\ell$, and $F^tv_{i-1}$ is not a vertex for some $1\le t\le r$. Then the preimage line
\[
 L=\{x:F^rx\in\ell\}
\]
supports $P$ and satisfies $L\cap P=\{v_i\}$; thus it exposes $v_i$.
\end{lemma}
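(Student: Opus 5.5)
Both parts rest on the face-monotonicity fact just proved, $F(G_P(x))\subseteq G_P(Fx)$, together with the fact that a non-vertex point never later becomes a vertex.

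\textbf{First assertion.} Argue by induction on $t$, proving the stronger claim that $F^t(E_i)$ lies in the single side $E_{i+t\kappa}$ and $F^tv_{i-1}=v_{i-1+t\kappa}$.

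For $t=0$ there is nothing to prove. Suppose the claim holds at step $t<r$, and write $w=v_{i-1+t\kappa}$, $w'=v_{i+t\kappa}$. Then $F^t(E_i)$ is a nondegenerate segment starting at $w$ and contained in $[w,w']$, since $F$ is injective.

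Apply $F$. By the assignment, $Fw\in(v_{i-1+(t+1)\kappa-1},v_{i-1+(t+1)\kappa}]$ and $Fw'\in(v_{i+(t+1)\kappa-1},v_{i+(t+1)\kappa}]$.

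\emph{The image lies in one side.} Because $F^{t+1}(E_i)\subseteq\partial P$ is a segment lying on the boundary of a convex polygon, it lies in one closed side. Its endpoint $F^{t+1}v_i$ is in the half-open side ending at $v_{i+(t+1)\kappa}$, so that closed side is $E_{i+(t+1)\kappa}$, or possibly $E_{i+(t+1)\kappa+1}$ if $F^{t+1}v_i$ is the vertex $v_{i+(t+1)\kappa}$.

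\emph{The starting endpoint is a vertex.} The point $F^{t+1}w$ lies in the half-open side ending at $v_{i-1+(t+1)\kappa}$, and it must share a closed side with $F^{t+1}v_i$. Since the two assigned half-open sides are consecutive, the only common closed side is $E_{i+(t+1)\kappa}$. Then $F^{t+1}w\in(v_{i-2+(t+1)\kappa},v_{i-1+(t+1)\kappa}]\cap E_{i+(t+1)\kappa}=\{v_{i-1+(t+1)\kappa}\}$. This closes the induction.

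The delicate point is excluding the case where the image segment sits in $E_{i+(t+1)\kappa+1}$. In that case the whole segment would reduce to the single vertex $v_{i+(t+1)\kappa}$, which is impossible by injectivity. It also needs $N\ge3$, so the sides are distinct.

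\textbf{Second assertion.} Let $\lambda$ be an affine functional with $\ell=\{\lambda=1\}$ and $\lambda\le1$ on $P$. Then $\mu=\lambda\circ F^r$ satisfies $\mu\le1$ on $P$, because $F^r(P)\subseteq P$. Also $\mu(v_i)=1$, since $F^rv_i\in\ell$. So $L=\{\mu=1\}$ supports $P$ and contains $v_i$.

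$L\cap P$ is a face of $P$ containing $v_i$, and it is mapped by $F^r$ into $\ell\cap P$, the side $S$ whose relative interior contains $F^rv_i$. By injectivity, if this face is not $\{v_i\}$ it is a side through $v_i$, namely $E_i$ or $E_{i+1}$.

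\emph{Face $E_i$.} Then $F^r(E_i)\subseteq S\subseteq\partial P$. The first assertion now forces $F^tv_{i-1}$ to be a vertex for all $t\le r$, contradicting the hypothesis.

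\emph{Face $E_{i+1}$.} Then $F^rv_i$ lies strictly inside $F^r(E_{i+1})$ or is an endpoint of it. Apply the first assertion to $E_{i+1}$: it gives $F^rv_i=v_{i+r\kappa}$, a vertex. This contradicts $F^rv_i$ lying strictly inside $S$.

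Hence $L\cap P=\{v_i\}$. I expect the main obstacle to be the bookkeeping in the first induction: pinning down which closed side contains the image segment under the half-open convention.
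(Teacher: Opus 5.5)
Your second assertion follows the paper: you pull back the supporting functional, reduce $L\cap P$ to $\{v_i\}$, $E_i$ or $E_{i+1}$, and exclude both sides using the first assertion. The inductive step of the first assertion, however, has a gap. You state that $F^{t+1}v_i$ lies in the half-open side ending at $v_{i+(t+1)\kappa}$, as if by the side assignment. The assignment only governs images of vertices, and for $t\ge1$ your induction hypothesis gives only $F^tv_i\in(w,w']$, not $F^tv_i=w'$. In general $F^tv_i$ is not a vertex: $Fv_i$ already lies strictly inside $E_{i+\kappa}$ whenever the contact on that side is interior. You computed $Fw'\in(v_{i+(t+1)\kappa-1},v_{i+(t+1)\kappa}]$ but then used $F^{t+1}v_i$ in its place. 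The two agree only at $t=0$, so as written the argument justifies only the step from $t=0$ to $t=1$.

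The missing idea is to transfer boundary contact from the subsegment $F^t(E_i)$ to the whole side $[w,w']$, whose endpoints are vertices. One way: $F^{t+1}(E_i)$ is a nondegenerate subsegment of $[Fw,Fw']$ containing $Fw$. Hence the closed side $S'$ containing it has supporting line $\aff(Fw,Fw')$, and so $Fw'\in P\cap\aff S'=S'$. Alternatively, take $y$ in the relative interior of $F^t(E_i)$, note $Fy\in\partial P$, and use a supporting functional maximal at the positive combination $Fy$ of $Fw$ and $Fw'$. Your common-closed-side argument, applied to the two vertex images $Fw$ and $Fw'$, then closes the induction. This is exactly the paper's step showing $F^{r-1}(E_{i+\kappa})\subseteq\partial P$, after which it inducts on $r$ with the full side $E_{i+\kappa}$.

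Separately, you use $F^{t+1}(E_i)\subseteq\partial P$ for $t+1<r$, although the hypothesis concerns only time $r$. This needs $F(\operatorname{int}P)\subseteq\operatorname{int}P$, which follows from openness of invertible affine maps or from the dimension inequality for smallest faces, and you should state it explicitly.
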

\begin{proof}
An invertible affine map sends open sets to open sets. Since $F(P)\subseteq P$, it follows that $F(\operatorname{int}P)\subseteq\operatorname{int}P$. If any point of $F(E_i)$ were interior, its image under $F^{r-1}$ would remain interior, contradicting $F^r(E_i)\subseteq\partial P$. Thus $F(E_i)$ is a nondegenerate boundary segment and lies on one side of $P$.

The side assignments give
\[
 Fv_{i-1}\in(v_{i+\kappa-2},v_{i+\kappa-1}],\qquad
 Fv_i\in(v_{i+\kappa-1},v_{i+\kappa}].
\]
The second image lies beyond the common vertex $v_{i+\kappa-1}$, which is excluded from its assigned side. For the segment joining these images to lie on one side, the first image must equal that common vertex. Hence
\[
 Fv_{i-1}=v_{i+\kappa-1},\qquad
 \relint(F(E_i))\subseteq\relint E_{i+\kappa},
\]
where the relative interior of a segment means the segment with both endpoints excluded.

To continue, we need the whole side $E_{i+\kappa}$ to stay on the boundary for the remaining $r-1$ steps. Choose $y$ strictly between the endpoints of $F(E_i)$. It also lies strictly between the endpoints of $E_{i+\kappa}$, while $F^{r-1}y\in\partial P$. Affinity expresses this boundary point as a convex combination, with both coefficients positive, of the images of those endpoints. A supporting functional maximal at the combination must be maximal at both images. Their whole joining segment therefore lies on the boundary, giving $F^{r-1}(E_{i+\kappa})\subseteq\partial P$. Induction on $r$ proves all the stated endpoint identities.

For the second conclusion, the line $\ell$ supports $P$ at $F^rv_i$. Since $F^r(P)\subseteq P$, its preimage $L$ supports $P$ at $v_i$. If $L$ met $P$ anywhere else, it would contain one of the two sides incident to $v_i$. If it contained $E_{i+1}$, the first conclusion would make $F^rv_i$ a vertex, contradicting its location inside a side. If it contained $E_i$, that conclusion would make $F^rv_{i-1}$ a vertex. But $v_{i-1}$ has a nonvertex image by time $r$, and such an image cannot later become a vertex. Both possibilities are excluded, so $L\cap P=\{v_i\}$.
\end{proof}

The previous lemma supplies exposing lines along which vertices may move. After moving the first vertex slightly along its side's line, we find each subsequent vertex by intersecting its exposing line with the line through the preceding moved vertex and a fixed side contact. This keeps the prescribed contacts on the corresponding sides. We must choose the initial motion so that the closing contact moves to the inward side of the new final side: the half-plane containing the polygon's interior.

Although the motion uses only points near the original vertices, its direction is determined by evaluating the same projection chain at another starting point. During that comparison, an intersection may occur at infinity. We therefore use completed lines: each ordinary line is supplemented by the point representing its direction, so parallel lines meet at that point.

We express these projections in homogeneous coordinates. A nonzero vector $X=(x,y,w)^T$ represents a point, and every nonzero scalar multiple represents the same point. When $w\ne0$, the ordinary point is $(x/w,y/w)$; vectors with $w=0$ represent points at infinity. The part with $w\ne0$ is an affine chart, where we can normalize $w=1$. A completed line has equation $\eta^TX=0$, with $\eta$ a nonzero column vector in $\mathbb R^3$; this equation defines the two-dimensional subspace representing the line.

Let $c$ represent the projection centre. Suppose the source line does not contain $c$ and the target line $\eta^TX=0$ satisfies $\eta^Tc\ne0$. Projection onto the target is induced by
\[
 X\longmapsto\left(I-\frac{c\eta^T}{\eta^Tc}\right)X,
\]
where $I$ is the $3\times3$ identity matrix. The result is a linear combination of $X,c$ and satisfies the target equation, so it represents the intersection of that line with the line through the original point and the centre. The map is an isomorphism between the two representing subspaces. In scalar coordinates along the lines, it has the fractional-linear form
\[
 \tau\longmapsto\frac{a\tau+b}{d\tau+e},\qquad ae-bd\ne0,
\]
with real constants $a,b,d,e$. A zero denominator represents an image at infinity. These maps and their compositions are projectivities.

An invertible $3\times3$ matrix changes the homogeneous coordinates. Dividing the first two resulting coordinates by the third gives ordinary plane coordinates. This third coordinate is an affine function of the original point, called the denominator. On a region where it is nonzero and keeps one sign, the transformation preserves segments and their interiors, because the transformed convex combinations still have positive weights.

\Needspace{20\baselineskip}
\begin{lemma}\label{lem:chain-holonomy}
Let $X_0,\ldots,X_{\ell+1}$, with $\ell\ge2$, be distinct consecutive vertices of a convex polygon $P$, listed in either direction around its boundary. Assume that at least one side of $P$ lies outside this chain. Choose contacts and exposing lines satisfying
\[
 C_i\in(X_{i-1},X_i)\quad(2\le i\le\ell+1),
 \qquad P\cap L_i=\{X_i\}\quad(2\le i\le\ell).
\]
Write $\aff(A,B)$ for the full line through distinct points $A,B$, and set $\Lambda_1=\aff(X_0,X_1)$ and $K=\aff(C_\ell,C_{\ell+1})$. An overbar on a line denotes its completion by a point at infinity.

Starting on $\overline\Lambda_1$, project through $C_2$ onto $L_2$, then through $C_i$ onto $L_i$ for $i=3,\ldots,\ell$, and finally through $X_{\ell+1}$ onto $K$. The resulting projectivity $\Pi:\overline\Lambda_1\to\overline K$ satisfies
\begin{equation}\label{eq:strict-holonomy}
 \Pi(X_1)=C_{\ell+1},\qquad
 \Pi(X_0)\in(C_{\ell+1},C_\ell).
\end{equation}
Let $S:\overline\Lambda_1\to\overline K$ be any projectivity with $S(X_1)=C_{\ell+1}$ and $S(X_0)=C_\ell$. Keep $X_0,X_{\ell+1}$ fixed. For small real $t$, define
\[
 \begin{gathered}
 Y(t)=(1-t)C_{\ell+1}+tC_\ell,\qquad X_1(t)=S^{-1}(Y(t)),\\
 X_i(t)=L_i\cap\aff(X_{i-1}(t),C_i)\qquad(2\le i\le\ell).
 \end{gathered}
\]
At $t=0$, these are the original vertices and $Y(0)=C_{\ell+1}$. Let $\mathcal D(t)$ be the signed distance of $Y(t)$ from the moving line $\aff(X_\ell(t),X_{\ell+1})$, positive on the side containing $X_0$. Then either $\mathcal D'(0)\ne0$, or $\mathcal D'(0)=0$ and $\mathcal D''(0)>0$. Thus there are arbitrarily small nonzero $t$ for which $Y(t)$ lies strictly on the inward side of the moving final line.
\end{lemma}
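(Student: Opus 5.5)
The plan has two parts. First, establish \eqref{eq:strict-holonomy} by working in a projective chart adapted to $P$. Then reduce the statement about $\mathcal D$ to a one-variable statement about the composite projectivity $G=\Pi\circ S^{-1}:\overline K\to\overline K$.

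\emph{First identity.} $\Pi(X_1)=C_{\ell+1}$ follows by following $X_1$ through the chain. The line through $X_1$ and $C_2$ is $\aff(X_1,X_2)$, and it meets $L_2$ only at $X_2$, since $L_2\cap P=\{X_2\}$ and $L_2$ does not contain the side. Inductively, $X_{i-1}$ projects through $C_i$ onto $L_i$ at $X_i$. Finally, $X_\ell$ projected through $X_{\ell+1}$ onto $K$ gives the intersection of $\aff(X_\ell,X_{\ell+1})$ with $K$, which is $C_{\ell+1}$.

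\emph{Second identity.} For $\Pi(X_0)$, use the hypothesis that some side of $P$ lies outside the chain. Take a line strictly separating that side's outer region, namely a line missing $P$ and placed beyond the part of the boundary not in the chain. Send it to infinity by an invertible $3\times3$ matrix whose denominator is positive on $P$ and on the relevant neighbourhood of the chain. In this chart, segments are preserved. Parametrize each $L_i$ by the reciprocal position $x_i=1/r_i$ of the projected point, measured from $X_i$ on the side of $L_i$ facing away from $X_{i+1}$. I would then verify the recurrence advertised in the introduction,
\[
 x_i=a_ix_{i-1}+b_i,\qquad a_i,b_i>0 .
\]
Positivity comes from the strict contact positions $C_i\in(X_{i-1},X_i)$ and from the exposing property $P\cap L_i=\{X_i\}$. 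Together these fix on which side of $X_i$ the projected image of $X_{i-2}$-type points falls. Starting from the image of $X_0$ on $L_2$, which has $x_2>0$ because $X_0\ne X_1$ lies beyond $C_2$'s line on the correct side, positivity propagates to $x_\ell$. The final projection through $X_{\ell+1}$ onto $K$ then sends such a point strictly inside $(C_{\ell+1},C_\ell)$. Checking this last step, with the correct orientation and the chart denominators staying positive, is where I expect the main bookkeeping obstacle. The recurrence rules out passage through infinity, so the comparison is finite in this chart.

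\emph{Reduction to $G$.} $G$ fixes $C_{\ell+1}$ and sends $C_\ell$ to a point strictly between $C_{\ell+1}$ and $C_\ell$. Parametrize $K$ by $t$ via $Y(t)$. Then $G$ becomes a fractional-linear map $g(t)=at/(dt+e)$ with $g(0)=0$ and $g(1)\in(0,1)$, so $g$ is not the identity. By construction, $X_\ell(t)$ is the chain image of $X_1(t)=S^{-1}Y(t)$, so the final line $\aff(X_\ell(t),X_{\ell+1})$ meets $K$ at $\Pi(X_1(t))=Y(g(t))$. Hence $\mathcal D(t)$ is the signed distance of $Y(t)$ from a line through $X_{\ell+1}$ and $Y(g(t))$. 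For small $t$ this line is transverse to $K$, so
\[
 \mathcal D(t)=\mu(t)\bigl(t-g(t)\bigr)
\]
with $\mu$ smooth and $\mu(0)\ne0$. The sign of $\mu(0)$ is fixed by the inward orientation toward $X_0$: points of $K$ on the $C_\ell$ side of $C_{\ell+1}$ are inward.

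\emph{Conclusion.} We have
\[
 t-g(t)=\frac{t\,(dt+e-a)}{dt+e}.
\]
If $a\ne e$, then $\mathcal D'(0)\ne0$. If $a=e$, then $t-g(t)=dt^2/(dt+e)$. Here $d\ne0$ because $g\ne\mathrm{id}$, and $g(1)=a/(a+d)\in(0,1)$ forces $d/a>0$. Thus $\mathcal D''(0)$ has the sign of $\mu(0)$, which is positive with the chosen orientation. In either case there are arbitrarily small nonzero $t$ with $\mathcal D(t)>0$.
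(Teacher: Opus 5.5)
Your reduction is sound and is the paper's own argument:
\begin{itemize}
\item following $X_1$ down the chain gives $\Pi(X_1)=C_{\ell+1}$;
\item the final line meets $K$ at $Y(g(t))$ with $g=\Pi\circ S^{-1}$, so $\mathcal D(t)=\mu(t)\bigl(t-g(t)\bigr)$ with $\mu(0)>0$;
\item your split $a\ne e$ versus $a=e$, with $g(1)\in(0,1)$ forcing $d/a>0$, matches the paper's analysis of $u(t)=at/(1+bt)$.
\end{itemize}

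The gap is the strict holonomy $\Pi(X_0)\in(C_{\ell+1},C_\ell)$. It carries all the content of the lemma, since it is the only reason $g\ne\mathrm{id}$ and $g(1)\in(0,1)$. You leave it as bookkeeping, and the mechanism you sketch fails in two places.

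\textbf{The chart.} Positive coefficients in $x_i=a_ix_{i-1}+b_i$ do not follow from $C_i\in(X_{i-1},X_i)$ and $P\cap L_i=\{X_i\}$ in an unspecified chart. The affine form itself is chart-independent: $X_{i-1}$ projects to $X_i$, so $x_{i-1}=\infty\mapsto x_i=\infty$. But a change of chart replaces each $x_i$ by $(x_i+c_i)/\lambda_i$ with $\lambda_i>0$. That preserves the sign of $a_i$ but not the sign of $b_i$.
\begin{itemize}
\item $b_i$ is the value of $x_i$ at the projection through $C_i$ of the point at infinity of $L_{i-1}$.
\item $b_i<0$ as soon as the turning from $L_{i-1}$ to $L_i$, both oriented along the chain, exceeds $\pi$.
\item This can happen once the exterior angles at $X_{i-1}$ and $X_i$ sum to more than $\pi$.
\end{itemize}
You need a chart in which the whole chain $X_0,\dots,X_{\ell+1}$ is a graph with side slopes $d_1<\dots<d_{\ell+1}$ and exposing slopes $d_i<s_i<d_{i+1}$. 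The paper obtains it from $\Phi=(\eta_0/\eta,\psi/\eta)$, which makes $X_0$ and $X_{\ell+1}$ the unique extremes of the first coordinate. Your line sent to infinity achieves this only if it is taken close to a non-chain side, and that turning bound would need a proof.

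\textbf{The last projection.} Even in such a chart, $x_\ell>0$ does not put $\Pi(X_0)$ in $(C_{\ell+1},C_\ell)$. A point $Z_\ell=X_\ell-r(1,s_\ell)$ with $r>0$ projects through $X_{\ell+1}$ into that open segment only while it lies between $X_\ell$ and $L_\ell\cap\aff(X_{\ell+1},C_\ell)$.

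Concretely, set $G(W)=\det(X_{\ell+1}-Z_\ell,W-Z_\ell)$ in the graph coordinates. Then $G(C_{\ell+1})<0$ for every $r>0$, while
\[
 G(C_\ell)=h_{\ell+1}\gamma_\ell h_\ell(d_{\ell+1}-d_\ell)
 +r\bigl(\gamma_\ell h_\ell(s_\ell-d_\ell)-h_{\ell+1}(d_{\ell+1}-s_\ell)\bigr).
\]
This is negative for large $r$ whenever $h_{\ell+1}(d_{\ell+1}-s_\ell)>\gamma_\ell h_\ell(s_\ell-d_\ell)$.

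The missing input is the upper bound $r_\ell<\gamma_\ell h_\ell$. It comes from the positive first term of the recurrence: $x_\ell>b_\ell$ gives
\[
 r_\ell<\gamma_\ell h_\ell\,\frac{d_\ell-s_{\ell-1}}{s_\ell-s_{\ell-1}}<\gamma_\ell h_\ell,
\]
and then $G(C_\ell)>0>G(C_{\ell+1})$.

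Without both ingredients, $g(1)\in(0,1)$ is unproved, and your final case analysis has nothing to work with.
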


\begin{figure}[htbp]
\centering
\begin{tikzpicture}[x=2cm,y=1.4cm, font=\small]
 \coordinate (X0) at (0,0);
 \coordinate (X1) at (1,0);
 \coordinate (X2) at (2,1);
 \coordinate (X3) at (3,3);
 \coordinate (C2) at (1.5,.5);
 \coordinate (C3) at (2.5,2);
 \coordinate (Z2) at ({12/7},{4/7});
 \coordinate (W) at ({33/14},{25/14});
 \fill[black!5] (X0)--(X1)--(X2)--(X3)--cycle;
 \draw[black!35] (X3)--(X0);
 \draw[thick] (X0)--(X1)--(X2)--(X3);
 \draw[dashed,black!65] (1.3,-.05)--(2.65,1.975)
   node[right] {$L_2$};
 \draw[blue!65!black] (X0)--(Z2)--(X3);
 \draw[orange!75!black] (1.3,.2)--(2.8,2.45)
   node[right] {$K$};
 \foreach \p in {X0,X1,X2,X3,C2,C3,Z2,W}
   \fill (\p) circle[radius=1.25pt];
 \node[below left] at (X0) {$X_0$};
 \node[below] at (X1) {$X_1$};
 \node[right] at (X2) {$X_2$};
 \node[above] at (X3) {$X_3$};
 \node[above left] at (C2) {$C_2$};
 \node[above left] at (C3) {$C_3$};
 \node[below right] at (Z2) {$Z_2$};
 \draw[black!60] (W)--(1.93,1.86);
 \node[left] at (1.93,1.86) {$\Pi(X_0)$};
 \node[below] at (.4,0) {$\Lambda_1$};
\end{tikzpicture}
\caption{The two projection chains for $\ell=2$. Starting at $X_0$, projection through $C_2$ meets $L_2$ at $Z_2$; projection through $X_3$ then meets $K$ at $\Pi(X_0)$, strictly between $C_2$ and $C_3$. Starting at $X_1$ gives $X_2$ and then $C_3=\Pi(X_1)$.}
\end{figure}

\begin{proof}
We first locate $\Pi(X_0)$ by putting the chain in coordinates where its side slopes are ordered. We then use that location to choose the direction of the local motion.

Projection from a centre $c$ onto a line $B$ sends a point $X\ne c$ to the intersection of $B$ with the line through $c,X$. We use completed lines, so parallel ordinary lines intersect at a point at infinity.

A projectivity between completed lines is an invertible map that, in scalar line coordinates, has the fractional-linear form $\tau\mapsto(a\tau+b)/(d\tau+e)$, with $ae-bd\ne0$. Projection restricted to a completed source line $A$ is a projectivity when $c$ lies on neither $A$ nor $B$. This condition makes every intersection unique, and projection from the same centre back onto $A$ gives the inverse. The homogeneous formula above gives the fractional-linear form. Compositions of projectivities are again projectivities.

These conditions hold at every step here. The first projection has source $\overline\Lambda_1$, centre $C_2$, and target $\overline L_2$. Since adjacent sides are not collinear, $C_2\notin\Lambda_1$; since $P\cap L_2=\{X_2\}$, also $C_2\notin L_2$. For $3\le i\le\ell$, the source is $\overline L_{i-1}$, the centre is $C_i$, and the target is $\overline L_i$. The contact $C_i$ belongs to $P$ and differs from both $X_{i-1}$ and $X_i$, so exposure excludes it from both lines. Finally, the projection with source $\overline L_\ell$, centre $X_{\ell+1}$, and target $\overline K$ is well defined: exposure excludes $X_{\ell+1}$ from $L_\ell$, and $X_{\ell+1}\notin K$ because $C_\ell,C_{\ell+1}$ lie strictly inside different adjacent sides. Thus every projection, and their composition $\Pi$, is a projectivity.

Starting at $X_1$ gives $X_2,\ldots,X_\ell$ and finally $C_{\ell+1}$. Since $S^{-1}(Y(0))=X_1$ and these intersections are finite at $t=0$, all moving points remain finite and depend smoothly on $t$ near zero.

To locate the image of $X_0$, choose affine functions $\eta_0,\eta_1\ge0$ on $P$ that vanish there only at $X_0,X_{\ell+1}$, respectively. Such a function is obtained by adding the two affine functions that vanish on the endpoint's incident side lines and are positive inside $P$. Their sum $\eta=\eta_0+\eta_1$ is strictly positive on $P$. The functions $\eta_0,\eta$ are linearly independent, as their values at the two endpoints show. Choose an affine function $\psi$ so that $\eta_0,\psi,\eta$ form a basis of the affine functions on the plane. The change of coordinates
\[
 \Phi(x)=\left(\frac{\eta_0(x)}{\eta(x)},
                  \frac{\psi(x)}{\eta(x)}\right)
\]
is invertible and finite throughout $P$. For $x,y\in P$ and $0\le s\le1$,
\[
 \Phi((1-s)x+sy)=
 \frac{(1-s)\eta(x)\Phi(x)+s\eta(y)\Phi(y)}
 {(1-s)\eta(x)+s\eta(y)}.
\]
These positive weights show that the transformation preserves convexity, segments, and their interiors. We use these coordinates until the location of $\Pi(X_0)$ has been established.

The first coordinate has unique minimum $0$ at $X_0$ and unique maximum $1$ at $X_{\ell+1}$. Every vertical slice of the polygon is an interval, so its two boundary chains between these endpoints are graphs. There are no vertical sides or vertical exposing lines at the internal vertices: such a supporting line would attain an extreme first coordinate, which occurs only at an endpoint. Reflect the second coordinate if necessary so that the selected chain is the lower graph. Convexity then makes its side slopes strictly increasing, and the slope of each exposing line lies strictly between those of the two incident sides. Write
\[
 \begin{gathered}
 X_i=(t_i,f_i),\qquad t_0<\cdots<t_{\ell+1},\\
 d_1<\cdots<d_{\ell+1},\qquad d_i<s_i<d_{i+1}\quad(2\le i\le\ell),
 \end{gathered}
\]
where $d_i$ is the slope of $[X_{i-1},X_i]$ and $s_i$ that of $L_i$. Set $s_1=d_1$ and $h_i=t_i-t_{i-1}>0$.

Write each contact as
\[
 C_i=X_i-\gamma_i h_i(1,d_i),\qquad 0<\gamma_i<1
 \quad(2\le i\le\ell+1).
\]
Starting at $Z_1=X_0$, let $Z_i$ be the successive projections onto $L_i$. We seek them in the form
\[
 Z_i=X_i-r_i(1,s_i),\qquad r_1=h_1.
\]
Thus $r_i$ measures the horizontal displacement from $Z_i$ to $X_i$ along the exposing line. The condition that $Z_{i-1},C_i,Z_i$ lie on one line is
\[
 \det(C_i-Z_{i-1},Z_i-C_i)=0,
\]
with
\[
 \begin{split}
 C_i-Z_{i-1}&=(1-\gamma_i)h_i(1,d_i)+r_{i-1}(1,s_{i-1}),\\
 Z_i-C_i&=\gamma_i h_i(1,d_i)-r_i(1,s_i).
 \end{split}
\]
Expanding the determinant and using the reciprocals $x_i=1/r_i$ gives the recurrence
\begin{equation}\label{eq:positive-transfer}
 x_i=\frac{(1-\gamma_i)(s_i-d_i)}{\gamma_i(d_i-s_{i-1})}x_{i-1}
       +\frac{s_i-s_{i-1}}{\gamma_i h_i(d_i-s_{i-1})}.
\end{equation}
Both coefficients are strictly positive because $s_{i-1}<d_i<s_i$. Starting with $x_1=1/h_1$, the recurrence therefore gives finite positive $x_i$. The resulting $r_i=1/x_i$ satisfy the collinearity equations, so the constructed points are the unique projective intersections. This also proves that none of these intersections is at infinity in the chosen coordinates. The positive first term gives
\[
 0<r_i<\gamma_i h_i\frac{d_i-s_{i-1}}{s_i-s_{i-1}}
          <\gamma_i h_i\qquad(2\le i\le\ell).
\]

It remains to project $Z_\ell$ through $X_{\ell+1}$ onto $K$. Put
\[
 r=r_\ell,\quad g=\gamma_\ell h_\ell,\quad H=h_{\ell+1},
 \quad d=d_\ell,\quad s=s_\ell,\quad D=d_{\ell+1}.
\]
The affine function
\[
 G(W)=\det(X_{\ell+1}-Z_\ell,W-Z_\ell)
\]
vanishes precisely on the projection line through $Z_\ell,X_{\ell+1}$. Direct substitution gives
\[
 \begin{split}
 G(C_\ell)&=H(g-r)(D-s)+g(H+r)(s-d)>0,\\
 G(C_{\ell+1})&=-\gamma_{\ell+1}Hr(D-s)<0.
 \end{split}
\]
Here the first sign uses $0<r<g$ and $d<s<D$. The opposite signs place the intersection strictly between the contacts. Together with $\Pi(X_1)=C_{\ell+1}$, this proves \eqref{eq:strict-holonomy}.

Return to the original affine coordinates and use $Y(t)=(1-t)C_{\ell+1}+tC_\ell$ to parametrize $K$. Define $u(t)$ by
\[
 \Pi\bigl(S^{-1}(Y(t))\bigr)=Y(u(t)).
\]
This projectivity fixes coordinate $0$, so it has the form
\[
 u(t)=\frac{at}{1+bt},\qquad a\ne0.
\]
The endpoint conditions give $u(0)=0$ and $0<u(1)<1$. Hence
\begin{equation}\label{eq:holonomy-defect}
 t-u(t)=\frac{(1-a)t+bt^2}{1+bt}.
\end{equation}
The moving final line meets $K$ at $Y(u(t))$. At $t=0$, increasing the coordinate from $C_{\ell+1}$ toward $C_\ell$ enters the inward half-plane of the final side. Its signed-distance function, restricted to $K$, is affine and vanishes at $Y(u(t))$. Thus the signed distance defined in the statement satisfies
\[
 \mathcal D(t)=\Gamma(t)(t-u(t)),
\]
where $\Gamma$ is smooth near zero and $\Gamma(0)>0$. This factor remains positive for small $t$. Equation~\eqref{eq:holonomy-defect} now gives
\[
 \mathcal D'(0)=\Gamma(0)(1-a).
\]
If $a\ne1$, choose the sign of $t$ so that $(1-a)t>0$. If $a=1$, the inequality $0<u(1)<1$ forces $b>0$, and
\[
 \mathcal D''(0)=2\Gamma(0)b>0.
\]
In that case either sufficiently small nonzero sign of $t$ gives inward motion. The comparison at parameter $1$ therefore determines the direction of the local motion near $0$; the construction need not remain finite throughout $[0,1]$.
\end{proof}

Lemma~\ref{lem:chain-holonomy} shows how to move a chain so that its closing contact lies strictly inward of the final side, even when the first derivative of the signed distance vanishes. To obtain an invariant polygon, we must extend this motion to all vertices using the tower identities and keep every other returning image on its corresponding side. The next proof checks these conditions by comparing the indices of the moving bases with those of their target sides.

\begin{theorem}\label{thm:no-skipping}
For the fixed complex number $\zeta$, assume the conclusion of Theorem~\ref{thm:saturation} holds for every polygon $Q$ with at most $N$ vertices and $\zeta Q\subseteq Q$. In particular, $\zeta u\in\partial Q$ for every vertex $u$ of every such $Q$.

Let $P$ be an invariant $N$-gon, with $N\ge4$, whose interior contacts occur exactly on the sides $E_j$ with $j\in B=\{1,\ldots,\varphi\}$. Assume the return data \eqref{eq:tower-data}--\eqref{eq:towers} and $\varphi>\gcd(N,\kappa)$. Then $\Delta=1$: the first return from each base lands at the contact on the side ending at the next base, with base indices taken modulo $\varphi$.
\end{theorem}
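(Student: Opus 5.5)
We argue by contradiction: assume $\Delta\ge2$, so the first return from some base $v_j$ lands at $c_{\sigma(j)}$ on $E_{\sigma(j)}$ and skips the intervening bases $v_{j+1},\ldots,v_{\sigma(j)-1}$. The aim is to build a new invariant $N$-gon for the same $\zeta$ in which one vertex image lies in the interior. That contradicts the boundary-contact property of Theorem~\ref{thm:saturation}, which we are allowed to assume for every such polygon.

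\textbf{Exposing lines.} Take the chain of consecutive vertices $X_0=v_{j-1},X_1=v_j,\ldots,X_{\ell+1}$ running through the skipped bases up to the side $E_{\sigma(j)}$ hit by the return. Its internal vertices should be exactly the skipped bases, so that $\ell=\Delta$. Since $\varphi<N$ or $\varphi>\delta$, at least one side lies outside the chain, as Lemma~\ref{lem:chain-holonomy} requires. For each internal base $v_k$ we need a supporting line meeting $P$ only at $v_k$. Take $F=T^{H_k}$ and $r=H_k$, and apply Lemma~\ref{lem:boundary-persistence} to the side $E_k$. By \eqref{eq:towers}, $F v_k=c_{\sigma(k)}$ lies strictly inside a side. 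The preceding vertex $v_{k-1}$ is a base, or an earlier base lies in the same run, and its tower reaches an interior contact within $H_k$ steps. Once a point is not a vertex it can never become one, and the side assignment then rules out $E_k$ staying a boundary side.

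The main obstacle is this index bookkeeping. We must show that for each internal skipped base the predecessor's orbit really leaves the vertex set by time $H_k$. The plan is to compare heights $q$ and $q+h$ via Lemma~\ref{lem:suspension} and use the ordering of returns: $\sigma(k-1)=\sigma(k)-1$, and the non-crossing of the contacts. The contact points $C_i$ of the chain are the returning images $c_{\sigma(\cdot)}$ together with the interior contacts on the chain sides. These are available because every chain side lies in $B$ and so carries an interior contact.

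\textbf{The motion.} Apply Lemma~\ref{lem:chain-holonomy} with $S$ induced by $T^{H_j}$ restricted to $\Lambda_1$, moved back by the ordinary affine correspondence. We move $v_j$ slightly along $\Lambda_1$ and determine each successive skipped base by intersecting its exposing line with the line through the previous moved base and its fixed contact. All other vertices are carried along by the tower identities $\zeta^t v_k=v_{\pi(t,k)}$, so every vertex-to-vertex relation and every other side contact persists. The inclusion $\zeta Q\subseteq Q$ is preserved for small motions, because the contact structure of all other images is unchanged and vertices near exposed vertices stay extreme. The lemma supplies a small $t\neq0$ for which the closing contact $\zeta^{H_j}v_j(t)$ lies strictly on the inward side of the moved final side. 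The polygon $Q$ still has $N$ vertices and is invariant under $T$, yet a vertex image lies in $\operatorname{int}Q$, which contradicts Theorem~\ref{thm:saturation}. Hence $\Delta=1$.
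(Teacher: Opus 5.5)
\textbf{There is a genuine gap.} Your strategy is the paper's: argue by contradiction, use Lemma~\ref{lem:boundary-persistence} for exposing lines, Lemma~\ref{lem:chain-holonomy} for the inward push, and transport the motion along towers. The gap is the sentence you wave through, ``all other vertices are carried along by the tower identities, so every \dots\ other side contact persists.'' That is where the proof lives, and with your fixed choice $\ell=\Delta$ it is false in general.

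The motion preserves invariance only under two conditions. First, every return landing on an internal chain side $[X_{i-1},X_i]$ must come from a base that does not move. Then $C_i$ is fixed, and the rule $X_i(t)=L_i\cap\aff(X_{i-1}(t),C_i)$ keeps $C_i$ on the moving side. Second, every moving base other than $X_1$ must return to a side whose line does not move. Then staying on its preimage line $L_i$ keeps its image on that side.

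With moving bases $M=\{j,\dots,j+\Delta-1\}$, the return sides are $\sigma(M)=\{j+\Delta,\dots,j+2\Delta-1\}$, and the internal chain sides are $J=\{j+1,\dots,j+\Delta-1\}$, modulo $\varphi$. These sets intersect as soon as $2\Delta\ge\varphi+2$. In that case a moving base returns to a moving side. Its image is pinned to the old line, and nothing keeps it inside $P_\tau$. The rule defining $X_i(t)$ also becomes inconsistent, because the corresponding $C_i$ is no longer fixed.

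In the extreme case $\varphi=N$, $\kappa=N-1$, we get $\Delta=\kappa=N-1$, and your chain closes up: $X_{\ell+1}=X_0$, with no side outside it. Your justification ``$\varphi<N$ or $\varphi>\delta$'' therefore does not supply the hypothesis of Lemma~\ref{lem:chain-holonomy}.

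\textbf{The missing idea: run the chain in the shorter direction.} If $2\Delta\le\varphi+1$, take $X_i=v_i$, $\ell=\Delta$, $a=q$. Otherwise take $X_i=v_{\varphi-i}$, $\ell=\varphi-\Delta+1$, $a=q+h$. Then $\ell\le(\varphi+1)/2$. With $k_*$ the index of the final side, $\sigma(M)=\{k_*+\varepsilon t:0\le t<\ell\}$ and $J=\{k_*-\varepsilon r:1\le r<\ell\}$. These are disjoint because $1\le t+r\le 2\ell-2<\varphi$. The bound on $\ell$ also leaves a side outside the chain when $\varphi=N\ge4$.

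\textbf{The starting base also matters.} Lemma~\ref{lem:chain-holonomy} needs the actual map $S=F^a|_{\Lambda_1}$ to send $X_0\mapsto C_\ell$ and $X_1\mapsto C_{\ell+1}$. Your phrase ``moved back by the ordinary affine correspondence'' does not ensure this. It holds for $(X_0,X_1)=(v_0,v_1)$, since $F^qv_0=c_\Delta$. It holds for $(v_\varphi,v_{\varphi-1})$, since $H_{\varphi-1}=H_\varphi=q+h$. It fails for $X_1=v_{\varphi-\Delta+1}$ when $h>0$, because then $F^{q+h}v_{\varphi-\Delta}=F^hc_\varphi\ne c_\varphi=C_\ell$.

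\textbf{The step you flagged as the main obstacle is easy.} Apply Lemma~\ref{lem:boundary-persistence} with $F=T$ (not $T^{H_k}$) and $r=H_k$. The predecessor condition is just $H_{k-1}\le H_k$, because the heights are nondecreasing in the base index.
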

\begin{proof}
Let $F(x)=\zeta x$, where $\zeta$ is fixed as in the theorem. Recall the return map and times:
\[
 \sigma(j)=1+[j+\Delta-1]_\varphi,\qquad
 H_j=q+h\mathbf1_{j>\varphi-\Delta},\qquad
 F^{H_j}v_j=c_{\sigma(j)},\quad F^hv_\varphi=v_0.
\]
Thus $H_j=q$ for the first $\varphi-\Delta$ bases and $H_j=q+h$ for the remaining $\Delta$. Every image before time $H_j$ is a vertex; the image at time $H_j$ lies strictly between the endpoints of the side $E_{\sigma(j)}$.

For $j\ge2$, the preceding base $v_{j-1}$ has a nonvertex image at time $H_{j-1}\le H_j$. Lemma~\ref{lem:boundary-persistence} therefore shows that the preimage line
\[
 \mathcal L_j:=F^{-H_j}(\aff E_{\sigma(j)})
       =\{x:F^{H_j}x\in\aff E_{\sigma(j)}\}
\]
exposes $v_j$: $P\cap\mathcal L_j=\{v_j\}$. This also covers the change from return time $q$ to $q+h$, because a nonvertex image cannot become a vertex at a later step.

Suppose $\Delta>1$. We will move some vertices while keeping $F(P)\subseteq P$, but make one vertex image strictly interior. The boundary contact assumption will then give a contradiction. First choose a chain of consecutive vertices on which to apply the local motion; then extend that motion to all tower levels.

The first vertex to be moved is $X_1$. We choose between two chains, going in opposite directions around the polygon, so that the selected chain is the shorter one. Write $\varepsilon$ for its direction and $a$ for the first return time of $X_1$:
\[
 (\varepsilon,\ell,X_i,a)=
 \begin{cases}
 (1,\Delta,v_i,q),&2\Delta\le\varphi+1,\\
 (-1,\varphi-\Delta+1,v_{\varphi-i},q+h),&2\Delta>\varphi+1,
 \end{cases}
 \qquad 0\le i\le\ell+1.
\]
Since $1<\Delta<\varphi$, we have $2\le\ell\le(\varphi+1)/2$. The chain uses $\ell+1$ sides. If $\varphi<N$, then $\ell+1\le\varphi<N$; if $\varphi=N\ge4$, then $\ell+1\le(N+3)/2<N$. Thus at least one polygon side lies outside the chain, as required by Lemma~\ref{lem:chain-holonomy}.

We keep $X_0,X_{\ell+1}$ fixed. They are bases outside the list to be moved, except possibly $X_0=v_0$. In that case $v_0=F^hv_\varphi$ remains fixed because its base $v_\varphi$ remains fixed. For $2\le i\le\ell$, take $L_i=\mathcal L_j$ when $X_i=v_j$. These indices belong to $\{2,\ldots,\varphi\}$, so $L_i$ exposes $X_i$. Let $C_i$ be the contact on $[X_{i-1},X_i]$ for $2\le i\le\ell+1$. In the two orientations, respectively,
\[
 (C_\ell,C_{\ell+1})=(c_\Delta,c_{\Delta+1})
 \quad\hbox{or}\quad(c_\Delta,c_{\Delta-1}),
\]
and the tower identities give
\[
 F^aX_0=C_\ell,\qquad F^aX_1=C_{\ell+1}.
\]
Consequently $F^a$ sends $\Lambda_1=\aff(X_0,X_1)$ onto $K=\aff(C_\ell,C_{\ell+1})$. Its restriction extends to a projectivity between the completed lines, since $F^a$ is invertible. This supplies the map $S$ required for the local motion.

Before moving the chain, we check which other returns must be preserved. Only the sides $E_i$ with $i\in B$ carry return contacts. Their endpoints are bases, except for $v_0$, which stays fixed. Let $b$ satisfy $v_b=X_1$, let $M$ be the base indices of $X_1,\ldots,X_\ell$, and let $k_*:=\sigma(b)$ be the index of the final chain side. Let $J$ be the indices of the internal chain sides $[X_{i-1},X_i]$, $2\le i\le\ell$, each indexed by its ending vertex. Taking base indices modulo $\varphi$, with representatives in $B$, gives
\begin{equation}\label{eq:network-disjointness}
 \sigma(M)=\{k_*+\varepsilon t:0\le t<\ell\},\qquad
 J=\{k_*-\varepsilon r:1\le r<\ell\}.
\end{equation}
These sets are disjoint. An intersection would require $\varphi\mid(t+r)$, whereas $1\le t+r\le2\ell-2<\varphi$. Hence every return to an internal chain side comes from a base outside $M$, which stays fixed. Also, because $\sigma$ is a permutation, $b$ is the only moving base whose return lands on the final side $E_{k_*}$. Every other moving base returns to a side with a fixed containing line. Figure~\ref{fig:global-motion} illustrates this separation and the extension along towers.

\begin{figure}[htbp]
\centering
\begin{tikzpicture}[x=1.15cm,y=.88cm,font=\small,
 vertex/.style={circle,draw=black!45,fill=white,minimum size=6mm,inner sep=1pt},
 moving/.style={vertex,draw=blue!65!black,fill=blue!9},
 seed/.style={vertex,draw=orange!80!black,fill=orange!14},
 side/.style={draw=black!45,fill=white,rounded corners=1pt,
              minimum width=7.8mm,minimum height=5.5mm,inner sep=1pt},
 internal/.style={side,draw=violet!70!black,fill=violet!10},
 target/.style={side,draw=blue!65!black,fill=blue!9},
 exceptional/.style={side,draw=orange!80!black,fill=orange!14}]
 \node[anchor=west] at (0,7.55) {(a) Cyclic indices modulo $\varphi=7$};
 \node[anchor=east] at (.35,6.15) {bases $j$};
 \node[seed] at (1,6.15) {$1$};
 \foreach \j in {2,3} \node[moving] at (\j,6.15) {$\j$};
 \foreach \j in {4,5,6,7} \node[vertex] at (\j,6.15) {$\j$};
 \draw[blue!65!black] (.65,6.53)--(.65,6.65)--(3.35,6.65)--(3.35,6.53);
 \node[blue!65!black,above] at (2,6.65) {$M$};
 \node[anchor=east] at (.35,5.15) {sides $E_k$};
 \foreach \k in {1,7} \node[side] at (\k,5.15) {$\k$};
 \foreach \k in {2,3} \node[internal] at (\k,5.15) {$\k$};
 \node[exceptional] at (4,5.15) {$4=k_*$};
 \foreach \k in {5,6} \node[target] at (\k,5.15) {$\k$};
 \draw[violet!70!black] (1.65,4.77)--(1.65,4.65)--(3.35,4.65)--(3.35,4.77);
 \node[violet!70!black,below] at (2.5,4.65) {$J$};
 \draw[blue!65!black] (3.65,4.77)--(3.65,4.65)--(6.35,4.65)--(6.35,4.77);
 \node[blue!65!black,below] at (5,4.65) {$\sigma(M)$};
 \node[anchor=west] at (0,3.85) {(b) Motion propagates down each tower};
 \node[anchor=east] at (.35,3.15) {$t=0$};
 \node[anchor=east] at (.35,2.25) {$t=1$};
 \node[anchor=east] at (.35,1.35) {$t=2$};
 \node[seed] (B1) at (1,3.15) {$v_1$};
 \node[seed] (T1) at (1,2.25) {$v_{11}$};
 \foreach \j/\k in {2/12,3/13} {
   \node[moving] (B\j) at (\j,3.15) {$v_{\j}$};
   \node[moving] (T\j) at (\j,2.25) {$v_{\k}$};
 }
 \foreach \j/\k in {4/14,5/15,6/16,7/0} {
   \node[vertex] (B\j) at (\j,3.15) {$v_{\j}$};
   \node[vertex] (T\j) at (\j,2.25) {$v_{\k}$};
 }
 \foreach \j/\k in {5/8,6/9,7/10}
   \node[vertex] (U\j) at (\j,1.35) {$v_{\k}$};
 \draw[->,orange!80!black] (B1)--(T1);
 \foreach \j in {2,3} \draw[->,blue!65!black] (B\j)--(T\j);
 \foreach \j in {4,5,6,7} \draw[->,black!45] (B\j)--(T\j);
 \foreach \j in {5,6,7} \draw[->,black!45] (T\j)--(U\j);
 \node[exceptional] (C1) at (1,.35) {$c_4$};
 \foreach \j/\k in {2/5,3/6} \node[target] (C\j) at (\j,.35) {$c_{\k}$};
 \foreach \j/\k in {4/7,5/1} \node[side] (C\j) at (\j,.35) {$c_{\k}$};
 \foreach \j/\k in {6/2,7/3} \node[internal] (C\j) at (\j,.35) {$c_{\k}$};
 \draw[->,dashed,orange!80!black] (T1)--(C1);
 \foreach \j in {2,3} \draw[->,dashed,blue!65!black] (T\j)--(C\j);
 \draw[->,dashed,black!45] (T4)--(C4);
 \draw[->,dashed,black!45] (U5)--(C5);
 \foreach \j in {6,7} \draw[->,dashed,violet!70!black] (U\j)--(C\j);
 \node[anchor=east,align=right] at (.35,.35) {terminal\\contacts};
\end{tikzpicture}
\caption{Index and contact relations for $(N,\kappa,\varphi,\Delta,q,h)=(17,10,7,3,2,1)$, with $\ell=3$ and $\sigma(j)=1+[j+2]_7$. The moving bases are $M=\{1,2,3\}$, their return sides are $\sigma(M)=\{4,5,6\}$, and the internal chain sides are $J=\{2,3\}$. Solid arrows join successive tower vertices under $F$; dashed arrows show the contacts reached at the first returns. Coloured towers move according to \eqref{eq:transported-motion}; the others stay fixed. The violet contacts have fixed sources and moving side lines, the blue contacts have moving sources and fixed side lines, and the orange contact has both its source and side line moving. The diagram records these index relations only; it does not assert the existence of an invariant polygon with $\Delta>1$.}
\label{fig:global-motion}
\end{figure}

Apply Lemma~\ref{lem:chain-holonomy} with $S=F^a|_{\Lambda_1}$. Denote the motion parameter by $\tau$ to distinguish it from the integer tower time $t$. Set $\widehat v_j(\tau)=X_i(\tau)$ when $v_j=X_i$ for $1\le i\le\ell$, and keep all other bases fixed. Since the tower lists partition the vertex indices, these choices extend uniquely to every vertex by
\begin{equation}\label{eq:transported-motion}
 \widehat v_{\pi(t,j)}(\tau)=F^t\widehat v_j(\tau)
 \qquad(0\le t<H_j).
\end{equation}
Within each tower, applying $F$ therefore gives the next moved vertex. Applying $F$ to the last vertex gives the returning image $F^{H_j}\widehat v_j(\tau)$. We must check that each such image remains in the polygon.

For an internal chain side, indexed by $J$, the base producing its contact is fixed, by \eqref{eq:network-disjointness}. Its returning image is therefore the original contact $C_i$, and the construction keeps $C_i$ on the line through $X_{i-1}(\tau),X_i(\tau)$.

For a moving base $X_i$ with $i\ge2$, the return side has index in $\sigma(M)\setminus\{k_*\}$. Its containing line stays fixed: it is neither an internal chain line nor the final line. Among the other sides carrying return contacts, the initial line remains $\Lambda_1$, and the remaining endpoints are fixed bases or $v_0$. Since $X_i(\tau)$ lies on the preimage line $L_i$, its returning image stays on its target line.

Every return side outside $J\cup\sigma(M)$ has both a fixed returning base and a fixed containing line, so its contact is unchanged. The only remaining return is from $X_1$. Its image is
\[
 Y(\tau)=F^aX_1(\tau),
\]
and its target side is $[X_\ell(\tau),X_{\ell+1}]$. Lemma~\ref{lem:chain-holonomy} places this image strictly on the inward side of the moving line for arbitrarily small nonzero $\tau$.

Define
\[
 P_\tau=\operatorname{conv}\{\widehat v_i(\tau):1\le i\le N\}.
\]
For sufficiently small $\tau$, the moved vertices remain distinct and form a convex $N$-gon in the same cyclic order. Every preserved contact remains strictly between its side endpoints, since it starts there and both the contact and endpoints vary continuously. The exceptional image $Y(\tau)$ starts strictly between the endpoints of the final side, so it remains strictly inside the inward half-planes of all the other sides for small $\tau$. Choose such a nonzero $\tau$ from Lemma~\ref{lem:chain-holonomy} to put it inward of the final side as well. Then $Y(\tau)\in\operatorname{int}P_\tau$.

Thus every vertex image belongs to $P_\tau$, so $F(P_\tau)\subseteq P_\tau$, and exactly one vertex image is strictly interior. That image is $Y(\tau)$, obtained by applying $F$ to the last vertex in the tower starting at $X_1$. Since $F$ is invertible, it is also a vertex of $F(P_\tau)$. This contradicts the assumed boundary contact property for every invariant polygon with at most $N$ vertices.
\end{proof}


\section{The product and its winding}
The index shift $i\mapsto i+\kappa$ modulo $N$ has $\delta=\gcd(N,\kappa)$ cycles, consisting of the residue classes modulo $\delta$. By Theorem~\ref{thm:no-skipping} and $\gcd(\Delta,\varphi)=\delta$ in \eqref{eq:tower-data}, $\varphi>\delta$ forces $\Delta=\delta=1$. Since $\varphi\ge\delta$, only two cases remain. When $\delta=1$, including the case of a single interior contact, we use the integer $q$ satisfying $q\kappa\equiv1\pmod N$ to build towers whose returns go to the next base, adding bases along paths of vertex images if needed. When $\delta\ge2$, we have $\varphi=\delta$, so each index cycle contains exactly one interior contact. The recurrence then links each base to the preceding one; conjugating and reversing the order puts it in the same form as in the first case.

In both cases we obtain relations between successive base vertices and a relation closing the list, then multiply them to eliminate the vertices. We also add the real angular increments along the same paths, using the continued vertex arguments from \eqref{eq:lift}. Consecutive differences of vertex arguments cancel in the sum, retaining the accumulated full turns. This real argument equality is needed to identify the intended boundary arc: the complex product alone determines an argument only modulo $2\pi$.

The cyclic recurrence with coefficients varying from side to side already appears in Karpelevi\v{c} \cite[Theorem~VI and equation~(13), pp.~375--376 of the Russian original]{Karp}. There the coefficients are first shown to be equal, and only then are the vertices eliminated. Here we allow the coefficients to differ while deriving the product. Equality of the coefficients will follow from the later convexity argument that determines the boundary modulus.

We will use the following Farey criterion: fractions $p/q<r/s$ in $F_N$ are consecutive if and only if
\[
 rq-ps=1,\qquad q+s>N.
\]
We prove both directions, keeping track of the denominator of any intermediate fraction.

Suppose first that the fractions are consecutive. Put $U=(q,p)$, $V=(s,r)$, and $D=\det(U,V)=rq-ps>0$. Since $p/q$ is reduced, B\'ezout's identity gives integers $u,v$ with $qv-pu=1$. Set $W=(u,v)$, so $\det(U,W)=1$. Consequently every integer vector is an integer combination of $U,W$; in particular,
\[
 V=aU+DW\qquad\text{for some integer }a.
\]
Since $r/s$ is reduced, $\gcd(a,D)=1$: a common divisor would divide both coordinates of $V$.

If $D>1$, then $a/D$ is not an integer. The point
\[
 Q=\left(\left\lceil\frac aD\right\rceil-\frac aD\right)U
       +\frac1D V
   =\left\lceil\frac aD\right\rceil U+W
\]
is therefore an integer vector whose two coefficients in terms of $U,V$ lie strictly between $0$ and $1$. If their sum exceeds $1$, replace $Q$ by $U+V-Q$. In either case we obtain
\[
 Q=\xi U+\eta V,\qquad \xi,\eta>0,\qquad \xi+\eta\le1.
\]
Write $Q=(d,c)$. The positive coefficients give $p/q<c/d<r/s$, while their sum bounds the denominator:
\[
 0<d=\xi q+\eta s\le\max(q,s)\le N.
\]
Reducing $c/d$ can only decrease its denominator, so this gives a fraction of $F_N$ strictly between the two consecutive fractions, a contradiction. Hence $D=1$. Also, if $q+s\le N$, the fraction $(p+r)/(q+s)$ lies strictly between them and has reduced denominator at most $N$, again a contradiction. Thus $q+s>N$.

Conversely, suppose $rq-ps=1$ and $q+s>N$. Any intermediate fraction $c/d$, with $d>0$, satisfies
\[
 (d,c)=(rd-sc)(q,p)+(qc-pd)(s,r).
\]
Both coefficients are positive integers because $p/q<c/d<r/s$. Comparing first coordinates gives $d\ge q+s>N$. No fraction of $F_N$ can therefore lie between the pair, so they are consecutive.

\Needspace{18\baselineskip}
\begin{theorem}[Product reduction]\label{thm:product}
Assume the extremality conditions \eqref{eq:extremal} with $N\ge4$ and nonreal $\zeta$, where $0<\rho=|\zeta|<1$. We can choose $\omega$ to be $\zeta$ or its conjugate so that the left Farey endpoint has the smaller denominator. Writing $\omega=\rho e^{i\vartheta}$ with $0<\vartheta<2\pi$, there are consecutive fractions in $F_N$ satisfying
\[
 \frac pq<\frac{\vartheta}{2\pi}<\frac rs,\qquad q<s.
\]
Set $m=\lfloor N/q\rfloor$ and $e=s-mq$. There are coefficients $0\le\beta_j<1$, with $\alpha_j=1-\beta_j>0$, such that
\begin{equation}\label{eq:product}
 \omega^e\prod_{j=1}^m(\omega^q-\beta_j)=\prod_{j=1}^m\alpha_j.
\end{equation}
Let $\Arg$ denote the principal argument in $(-\pi,\pi]$, and put
\[
 A=q\vartheta-2\pi p,\qquad M=\Arg(\omega^q-1),
 \qquad u_j=\Arg(\omega^q-\beta_j).
\]
Then $u_j\in[A,M)$, and the same factors satisfy the real argument identity
\begin{equation}\label{eq:product-phase}
 0<A<M<\pi,\qquad
 e\vartheta+\sum_{j=1}^m u_j=2\pi(r-mp).
\end{equation}
If $e<0$, multiplying \eqref{eq:product} by $\omega^{mq}$ removes the negative power and gives the equivalent polynomial equation
\[
 \omega^s\prod_{j=1}^m(\omega^q-\beta_j)
       =\omega^{mq}\prod_{j=1}^m\alpha_j.
\]
\end{theorem}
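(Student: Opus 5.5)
The plan is to turn the tower identities \eqref{eq:towers} into a cyclic recurrence between consecutive bases, multiply these relations around the cycle, and telescope the lifted arguments \eqref{eq:path-lift} along the same paths. By Theorem~\ref{thm:no-skipping} and $\gcd(\Delta,\varphi)=\delta$ in \eqref{eq:tower-data}, only two cases remain: $\varphi>\delta$, which forces $\Delta=\delta=1$, and $\varphi=\delta\ge2$.

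\emph{Case $\delta=1$.} Take $q$ with $q\kappa\equiv1\pmod N$, the return time of Lemma~\ref{lem:record-basis} with $\Delta=1$. Every first return then lands on the side ending at the next base. If $\varphi$ is smaller than the required number of factors, I will adjoin extra bases. These are vertices on the orbit paths, chosen so that $v_{j+1}$ is reached from $v_j$ along endpoint identities. Each added base carries a factor with $\beta=0$, so $\omega^q-0$ is allowed. The aim is to have exactly $m=\lfloor N/q\rfloor$ factors $\omega^q-\beta_j$ and one leftover power $\omega^e$. Lemma~\ref{lem:suspension} is stated for general bases, so it gives the partition and the first returns for this enlarged base set.

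For each base the closing contact reads
\[
 \zeta^{H_j}v_j=\beta_{j+1}v_j+\alpha_{j+1}v_{j+1},
 \qquad\text{so}\qquad
 v_{j+1}=\frac{\zeta^{H_j}-\beta_{j+1}}{\alpha_{j+1}}\,v_j .
\]
Multiplying around the cycle of bases cancels the nonzero vertices and gives \eqref{eq:product}. Here one height is $q+h$ and the others are $q$; I will write $\zeta^{q+h}=\zeta^h\zeta^q$ and absorb $\zeta^h$ into $\omega^e$. The extra power comes from the path $\zeta^h v_\varphi=v_0$ in \eqref{eq:towers}, and its exponent must be rewritten as $e=s-mq$, possibly negative.

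The Farey data come from the lift \eqref{eq:lift}, which gives $(\kappa-1)/N<\theta_\zeta/2\pi<\kappa/N$, together with the record vectors of Lemma~\ref{lem:record-basis}, which satisfy $\det(U,V)=1$. I will define $p/q$ and $r/s$ from the vectors $U$ and $V'=U+V$. I will then verify that they are consecutive in $F_N$ using the Farey criterion just proved, namely $rq-ps=1$ and $q+s>N$. Finally I will check that the number of factors equals $\lfloor N/q\rfloor$ and that $e=s-mq$. If the smaller denominator ends up on the right, I will replace $\zeta$ by $\omega=\overline\zeta$. This reverses the cyclic order and reflects the Farey pair to $(1-g,1-f)$.

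\emph{Case $\varphi=\delta\ge2$.} Each of the $\delta$ index cycles contains exactly one interior contact. The recurrence now links each base to the preceding one. Conjugating and reversing the labels puts it into the form of the first case, and the same multiplication applies.

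\emph{Argument identity.} Along each return path I will use \eqref{eq:path-lift} in the form $\Phi_{j}+H_j\theta_\zeta=\Phi_{j+H_j\kappa}$, kept unreduced. The argument of $v_{j+1}/v_j$ is then $\Phi_{j+1}-\Phi_j$, computed as $H_j\vartheta$ minus the angular offset of the contact within its side. That offset is expressed through $u_j=\Arg(\omega^q-\beta_j)$ after subtracting $2\pi p$. Summing over the cycle, the differences $\Phi_{j+1}-\Phi_j$ telescope to $2\pi$ times the number of full turns. I will identify this count as $r-mp$ from the integer second coordinates of the record vectors, which yields \eqref{eq:product-phase}.

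For the range $u_j\in[A,M)$: as $\beta$ runs over $[0,1)$, the point $\omega^q-\beta$ moves along a segment from $\omega^q$ towards $\omega^q-1$. Its argument therefore runs monotonically from $\Arg\omega^q=A$ towards $M$. This needs $0<A<\pi$, which follows from $p/q<\vartheta/2\pi<r/s$ and $s>q$. Indeed, these give $0<q\vartheta-2\pi p<2\pi q/(qs)\le\pi$.

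The main obstacle is the bookkeeping that matches the base count with $m=\lfloor N/q\rfloor$ and the leftover exponent with $e$. This includes deciding which of $\zeta,\overline\zeta$ to use and showing that the full-turn count is exactly $r-mp$ rather than agreeing with it only modulo $1$. I would first test this on the worked example $N=8$, $\kappa=3$, and then derive the general identities from $q\varphi+h\Delta=N$ and $\det(U,V)=1$.
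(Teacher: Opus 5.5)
Your skeleton matches the paper's: the same two cases, Lemma~\ref{lem:suspension} applied to the enlarged bases $v_1,\dots,v_m$ with $\beta_j=0$ on the added ones, multiplication of the successor relations, and conjugation with reversal when $\varphi=\delta\ge2$. The gap is in the Farey data, which is exactly where $e$ and the turn count are decided.

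In the case $\delta=1$ you take $r/s$ from the next record vector $V'=U+V=(h+q,b+p)$. But $L(V')=-(\varphi-1)<0$ gives $(b+p)/(h+q)>\kappa/N$. Since $N=q\varphi+h$, you get $q+s=2q+h\le N$ whenever $\varphi\ge2$. Your check $q+s>N$ therefore fails, because $\kappa/N$ lies strictly between your two fractions. In the eight-state example you would obtain $(1/3,2/5)$ instead of $(1/3,3/8)$, hence $e=-1$ and $r-mp=0$. The actual relations give $\zeta^2(\zeta^3-\beta_1)(\zeta^3-\beta_2)=\alpha_1\alpha_2$ with one full turn. The correct right endpoint is $\kappa/N$ itself, that is, $(s,r)=(N,\kappa)=\varphi U+V$. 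Then $e=N-mq\ge0$ and the turn count is $\kappa-mp$. Moreover, whenever $\varphi=\delta$ (in particular for $\varphi=1$, which you place in the first case), the record with deficit $\varphi$ is the last one. So there is no $V'$, and your recipe is undefined there.

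The strict lower bound is also missing. \eqref{eq:lift} gives only $(\kappa-1)/N<\theta_\zeta/(2\pi)$, which is weaker than $p/q<\theta_\zeta/(2\pi)$ as soon as $q\ge2$. You need a geometric input. The $q$-step path from $v_0$ passes through vertices and ends at $c_1$, strictly inside side~$1$, with unreduced index advance $q\kappa=pN+1$. This gives $0<q\theta_\zeta-2\pi p<\gamma_1$, and the Farey criterion, with $q\kappa-pN=1$ and $q+N>N$, then makes $p/q$ and $\kappa/N$ consecutive.

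In the case $\varphi=\delta\ge2$ you must likewise exhibit the pair $r_0/s_0<K/L$. Here $L=N/\delta$, $K=\kappa/\delta$, $1\le h_0<L$ with $Kh_0\equiv-1\pmod L$, $b_0=(Kh_0+1)/L$, $s_0=N-h_0$ and $r_0=\kappa-b_0$. You must also prove $r_0/s_0<\theta_\zeta/(2\pi)$. Combine the closing path $\Phi_\delta+h_0\theta_\zeta=\Phi_0+2\pi b_0$ with the return inequalities $0<2\pi K-L\theta_\zeta<\gamma_i$, summed over $i$:
\[
 s_0\theta_\zeta-2\pi r_0=\Gamma-\delta(2\pi K-L\theta_\zeta)>0,\qquad \Gamma=\sum_{i=1}^{\delta}\gamma_i=\Phi_\delta-\Phi_0.
\]
``Conjugate and reverse'' alone does not supply this; this case is also the only source of a negative exponent, $e=-h_0$.

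Finally, \eqref{eq:path-lift} is an equality only along endpoint steps, not along return paths ending at interior contacts. The clean argument reads $u_j=\Phi_j-\Phi_{j-1}=\gamma_j$ directly from $(\zeta^q-\beta_j)v_{j-1}=\alpha_jv_j$, using $\alpha_j>0$ and $\gamma_j\in(0,\pi)$. It applies the lift equality only to the closing path $v_m\to v_0$. That path's index advance $m+e\kappa=(\kappa-mp)N$ gives $e\vartheta+\sum_ju_j=2\pi(r-mp)$.
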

\begin{proof}
Use the polygon chosen above to minimize the number $\varphi$ of interior contacts, with contact indices $I=\{1,\ldots,\varphi\}$. Put $\delta=\gcd(N,\kappa)$. By Theorem~\ref{thm:no-skipping}, $\varphi>\delta$ forces $\Delta=1$, and then \eqref{eq:tower-data} gives $\delta=1$. Since $\varphi\ge\delta$, it remains to treat $\delta=1$ and $\varphi=\delta\ge2$. In both cases we multiply relations between successive vertices and add their real argument differences. Recall that $\Phi_{i+N}=\Phi_i+2\pi$ and that $\gamma_i=\Phi_i-\Phi_{i-1}\in(0,\pi)$ is the angular gap along side $i$.

\emph{Case $\delta=1$.} Now $\kappa$ and $N$ are coprime. Choose $1\le q<N$ and an integer $p$ such that $q\kappa-pN=1$. If $\varphi>1$, these are the $q,p$ from \eqref{eq:tower-data}: there $\Delta=1$, and $q\varphi+h=N$ gives $q<N$ and $\lfloor N/q\rfloor\ge\varphi$. When $\varphi=1$, the latter inequality is automatic. Set
\[
 m=\lfloor N/q\rfloor,\qquad e=N-mq,\qquad g=\kappa-mp.
\]
Then $m\ge\varphi$, $0\le e<q$, and
\[
 q\kappa-pN=1,\qquad e\kappa-gN=-m,\qquad qm+e=N.
\]
These identities verify the hypotheses of Lemma~\ref{lem:suspension} with $(\nu,\Delta,h)=(m,1,e)$. The bases are $v_1,\ldots,v_m$, and their indices include every index in $I$. For $2\le j\le m$, the return from $v_{j-1}$ has length $q$ and ends at $c_j$, on the side ending at $v_j$. The return from $v_m$ to $c_1$ has length $q+e$ and reaches $v_0$ after $e$ steps. Splitting this last path at $v_0$ and using the contact formula \eqref{eq:contacts} gives
\begin{equation}\label{eq:successor-product}
 (\zeta^q-\beta_j)v_{j-1}=\alpha_jv_j\quad(1\le j\le m),
 \qquad \zeta^ev_m=v_0.
\end{equation}
This also covers $m=1$. Any added bases $j>\varphi$ have $\beta_j=0$; they only subdivide paths whose images are vertices, without adding interior contacts.

The $q$-step path from $v_0$ ends strictly inside side $1$, and all its earlier images are vertices. Its index advance is $q\kappa=pN+1$. Keeping the integer indices in the argument lift \eqref{eq:path-lift} therefore gives
\[
 \Phi_0+q\theta_\zeta\in(\Phi_0+2\pi p,\Phi_1+2\pi p),
 \qquad 0<q\theta_\zeta-2\pi p<\gamma_1.
\]
Together with the upper bound in \eqref{eq:lift}, this yields
\[
 \frac pq<\frac{\theta_\zeta}{2\pi}<\frac\kappa N.
\]
Since $q\kappa-pN=1$ and $q+N>N$, the Farey criterion proved above shows that $p/q$ and $\kappa/N$ are consecutive in $F_N$. Thus take $\omega=\zeta$, $\vartheta=\theta_\zeta$, $r=\kappa$, and $s=N$; their denominators satisfy $q<s$.

Because $\alpha_j>0$, \eqref{eq:successor-product} identifies the principal argument of $\zeta^q-\beta_j$ with
\[
 u_j=\Phi_j-\Phi_{j-1}=\gamma_j\in(0,\pi).
\]
The closing path has index advance $m+e\kappa=gN$, so its real argument equality is
\[
 \Phi_m+e\theta_\zeta=\Phi_{gN}=\Phi_0+2\pi g.
\]
For $e=0$, this is the periodic extension of the vertex arguments. Multiplying the recurrences in \eqref{eq:successor-product} and using the closing relation cancels the nonzero vertices and gives \eqref{eq:product}. Adding the real differences gives
\[
 e\vartheta+\sum_{j=1}^m u_j
 =e\theta_\zeta+\Phi_m-\Phi_0
 =2\pi g=2\pi(r-mp).
\]

\emph{Case $\varphi=\delta\ge2$.} Put $L=N/\delta$ and $K=\kappa/\delta$, so $\gcd(K,L)=1$. Each cycle of the index shift has length $L$ and contains exactly one interior contact. The return from $v_i$ therefore ends at $c_i$ after $L$ steps. Choose $1\le h<L$ with $Kh\equiv-1\pmod L$, and set
\[
 b=\frac{Kh+1}{L},\qquad s_0=N-h,\qquad r_0=\kappa-b.
\]
The congruence makes $b$ an integer and gives $\delta+h\kappa=bN$. Hence the path from $v_\delta$ reaches $v_0$ after $h$ steps. Its indices stay in the residue class $0$ modulo $\delta$, whose only interior contact has index $\delta$. Since $h<L$, the path does not return to that contact, so every image along it is a vertex. Using the contact formula, we obtain
\begin{equation}\label{eq:orbit-product}
 (\zeta^L-\alpha_i)v_i=\beta_i v_{i-1}\quad(1\le i\le\delta),
 \qquad \zeta^h v_\delta=v_0.
\end{equation}
All these contacts are interior, so $\alpha_i,\beta_i\in(0,1)$.

The closing path and the $L$-step returns give the real argument relations
\[
 \Phi_\delta+h\theta_\zeta=\Phi_0+2\pi b,
 \qquad 0<2\pi K-L\theta_\zeta<\gamma_i\quad(1\le i\le\delta).
\]
Indeed, the index advance of each return is $L\kappa=KN$, and its final image lies strictly between $v_{i-1}$ and $v_i$. Put $\Gamma=\sum_{i=1}^\delta\gamma_i=\Phi_\delta-\Phi_0$. The displayed relations imply
\[
 s_0\theta_\zeta-2\pi r_0
 =\Gamma-\delta(2\pi K-L\theta_\zeta)>0,
 \qquad \frac{r_0}{s_0}<\frac{\theta_\zeta}{2\pi}<\frac KL.
\]
Here $1\le b\le K$, and the definitions give
\[
 Ks_0-r_0L=1,\qquad L+s_0=N+(L-h)>N.
\]
Thus $r_0/s_0$ and $K/L$ are reduced consecutive fractions in $F_N$.

The smaller denominator $L$ is on the right. Conjugation reflects this interval and puts it on the left; reversing the vertex list gives relations in the same order as \eqref{eq:successor-product}. Explicitly, set
\[
 \omega=\overline\zeta,\quad\vartheta=2\pi-\theta_\zeta,\quad
 W_j=\overline v_{\delta-j}\ (0\le j\le\delta),\quad
 (p,q,r,s)=(L-K,L,s_0-r_0,s_0).
\]
Since $\delta\ge2$ and $h<L$, we have $q=L<s_0=s$, $m=\delta$, and $e=-h$. With
\[
 \widetilde\beta_j=\alpha_{\delta-j+1},\qquad
 \widetilde\alpha_j=\beta_{\delta-j+1}\quad(1\le j\le m),
\]
conjugating \eqref{eq:orbit-product} gives
\[
 (\omega^q-\widetilde\beta_j)W_{j-1}=\widetilde\alpha_jW_j,
 \qquad \omega^{-h}W_m=W_0.
\]
The new coefficients satisfy $0<\widetilde\beta_j<1$ and $\widetilde\alpha_j=1-\widetilde\beta_j>0$. Reversal and conjugation give successive argument differences $u_j=\gamma_{\delta-j+1}$, so their sum is $\Gamma=2\pi b-h\theta_\zeta$. Consequently
\[
 e\vartheta+\sum_{j=1}^m u_j
 =-h(2\pi-\theta_\zeta)+2\pi b-h\theta_\zeta
 =2\pi(b-h)=2\pi(r-mp).
\]
Multiplying the recurrences and using the closing relation gives \eqref{eq:product}, with the tildes dropped.

In both cases, Farey adjacency gives $0<A<2\pi/s<\pi$; here $s\ge3$ because $q<s$, $q+s>N$, and $N\ge4$. Thus $\omega^q=\rho^q e^{iA}$ lies in the upper half-plane. Subtracting $t\in[0,1]$ moves this point horizontally to the left, so its principal argument increases strictly from $A$ to $M=\Arg(\omega^q-1)<\pi$. Since $0\le\beta_j<1$, each factor argument belongs to $[A,M)$, as required. If $e<0$, multiplying the product equation by $\omega^{mq}$ gives the stated polynomial equation because $e+mq=s$.
\end{proof}

\subsection*{The eight-state example, continued}
Return to the example in Section~\ref{sec:return-arithmetic}. Suppose an invariant polygon has eight vertices $v_0,\ldots,v_7$, with
\[
 \zeta v_i\in(v_{i+2},v_{i+3}],
\]
where indices are taken modulo $8$. Its only images lying strictly inside sides are $c_1\in(v_0,v_1)$ and $c_2\in(v_1,v_2)$; all other vertex images are vertices. This is the configuration specified by $N=8$, $\kappa=3$, and $\varphi=2$.

Here $q=3$, $e=2$, and $m=\lfloor8/3\rfloor=\varphi=2$. The product construction requires $m=2$ starting vertices. These are the same vertices $v_1,v_2$ from which the two earlier towers start; no further vertices need to be chosen. Each arrow denotes one multiplication by $\zeta$:
\[
 \begin{aligned}
 v_1&\longrightarrow v_4\longrightarrow v_7\longrightarrow c_2,\\
 v_2&\longrightarrow v_5\longrightarrow v_0\longrightarrow v_3
       \longrightarrow v_6\longrightarrow c_1.
 \end{aligned}
\]
Here a return means the first subsequent image assigned to $(v_0,v_1]$ or $(v_1,v_2]$, the sides ending at the chosen starting vertices. The intermediate vertex $v_0$ is excluded from $(v_0,v_1]$, so reaching it is not yet a return. The second path takes five steps to reach $c_1$, on the side ending at $v_1$. It reaches $v_0$ after two steps, leaving three steps to $c_1$. Using $c_j=\beta_jv_{j-1}+\alpha_jv_j$, with $\alpha_j=1-\beta_j$, the first path and the split at $v_0$ give
\[
 (\zeta^3-\beta_1)v_0=\alpha_1v_1,\qquad
 (\zeta^3-\beta_2)v_1=\alpha_2v_2,\qquad
 \zeta^2v_2=v_0.
\]
Multiplying these relations and cancelling the vertex factors gives
\[
 \zeta^2(\zeta^3-\beta_1)(\zeta^3-\beta_2)=\alpha_1\alpha_2.
\]
Set $u_j=\Phi_j-\Phi_{j-1}$ for $j=1,2$. The closing path gives
\[
 \Phi_2+2\theta_\zeta=\Phi_8=\Phi_0+2\pi.
\]
Adding these real argument differences therefore yields
\[
 2\theta_\zeta+u_1+u_2=2\pi.
\]
This records one full turn, agreeing with $r-mp=1$ for the endpoint data $(p,q,r,s)=(1,3,3,8)$ and the Farey interval $(1/3,3/8)$.

For this example, choose $\theta_\zeta=5\pi/7$. The angle parameters used in the next section are then
\[
 A=3\theta_\zeta-2\pi=\frac\pi7,\qquad
 B=\frac{6\pi-8\theta_\zeta}{2}=\frac\pi7.
\]
The equation for $\rho=|\zeta|$ derived there reduces to
\[
 \rho^4+\rho^3=2\cos(\pi/7).
\]
Its left-hand side increases strictly from $0$ to $2$ on $[0,1]$, so there is a unique solution $\rho\in(0,1)$. Set
\[
 \beta_1=\beta_2=\beta=\frac1{1+\rho},\qquad
 \alpha_1=\alpha_2=\alpha=\frac{\rho}{1+\rho}.
\]
For $\zeta=\rho e^{5\pi i/7}$, the equation $\rho^3(1+\rho)=2\cos(\pi/7)$ gives
\[
 \zeta^3-\beta=\beta e^{2\pi i/7}.
\]
Thus $u_1=u_2=2\pi/7$. The product has modulus $\rho^2\beta^2=\alpha^2$ and total argument $2\theta_\zeta+u_1+u_2=2\pi$, so both identities hold. Numerically, $\rho\simeq0.97061308$; this approximation is not used in the proof.

We can realize the same number $\zeta$ as an eigenvalue of an $8\times8$ row-stochastic matrix $A$. Index its rows and columns by $0,\ldots,7$. An arrow $i\xrightarrow{a}j$ means that $A_{ij}=a$, and all unlisted entries are zero. Give each arrow in the paths
\[
 0\longrightarrow3\longrightarrow6,\qquad
 1\longrightarrow4\longrightarrow7,\qquad
 2\longrightarrow5\longrightarrow0
\]
weight $1$, and set the remaining transitions to
\[
 6\xrightarrow{\beta}0,\quad6\xrightarrow{\alpha}1,
 \qquad
 7\xrightarrow{\beta}1,\quad7\xrightarrow{\alpha}2.
\]
Every row has either one entry equal to $1$ or two entries $\alpha,\beta$ whose sum is $1$, so $A$ is row-stochastic.

To construct an eigenvector $v=(v_0,\ldots,v_7)^T$, set
\[
 v_0=1,\qquad v_1=\frac{\zeta^3-\beta}{\alpha},
 \qquad v_2=\zeta^{-2}.
\]
The product identity above ensures $(\zeta^3-\beta)v_1=\alpha v_2$; the other two recurrences follow directly. Define the remaining coordinates along the arrows of weight $1$ by $v_j=\zeta v_i$. The six corresponding rows then satisfy $(Av)_i=\zeta v_i$, while the recurrences $(\zeta^3-\beta)v_i=\alpha v_{i+1}$ for $i=0,1$ verify rows $6$ and $7$, respectively. Thus $Av=\zeta v$, and $v\ne0$ because $v_0=1$.

This places $\zeta$ in $\Theta_8$. The later convexity bound and Farey comparison, proved independently of this example, show that $\rho$ is the largest possible modulus at this argument. The eigenvector construction does not establish that its coordinates are all polygon vertices with the image assignment assumed in the earlier illustration.



\section{The bound on the modulus and its attainment}
The product reduction allows the coefficients $\beta_j$ to vary. Its real argument identity determines the average $m^{-1}\sum_j u_j$. For the factors normalized by $1-\beta_j$, we will show that the logarithm of the modulus is a strictly convex function of the argument $u_j$. Jensen's inequality states that the average of these logarithms is at least the function's value at the average argument. The resulting inequality gives an upper bound on $|\omega|$, with equality precisely when the $\beta_j$ are equal. We then construct stochastic matrices attaining the bound.

For fixed $\omega$, write
\[
 g_j=\frac{\omega^q-\beta_j}{1-\beta_j},\qquad
 u_j=\operatorname{Arg}g_j.
\]
Then $\log|g_j|$ is a strictly convex function of $u_j$. This calculation and Jensen's inequality were already used in \cite[Lemma~2.2 and Proposition~3.1]{VGNCycle} for matrices whose nonzero entries lie on a directed $n$-cycle and the diagonal. Here Theorem~\ref{thm:product} obtains the product and a real argument identity from an extremal invariant polygon in the general stochastic eigenvalue problem. Applying the same calculation to these identities gives a bound on $|\omega|$, with equality precisely when all $\beta_j$ agree. We include the short proof to keep the argument self-contained. The resulting equation agrees with the boundary description at a fixed argument in \cite[Theorems~1.2 and~4.3; Lemma~4.4]{KirklandLaffeySmigoc}. Stochastic realization and the comparison between orders will then show that this equation describes the boundary.

Let $f<g$ be consecutive fractions in $F_n\cap[0,1/2]$, and fix an angle $\theta$ with $x=\theta/(2\pi)\in(f,g)$. Set
\[
 (p/q,r/s,y)=
 \begin{cases}
 (f,g,x),&f\text{ has the smaller denominator},\\
 (1-g,1-f,1-x),&g\text{ has the smaller denominator}.
 \end{cases}
\]
In the chosen interval $(p/q,r/s)$, the lower endpoint $p/q$ has denominator $q$, and the upper endpoint $r/s$ has denominator $s>q$. In the second case, $y=1-x$ corresponds to complex conjugation: the angle becomes $2\pi-\theta$ and the modulus is unchanged. In both cases, $p/q$ and $r/s$ are consecutive in $F_n$, with
\[
 \frac pq<y<\frac rs,\qquad q<s,\qquad rq-ps=1.
\]
Define
\begin{equation}\label{eq:scalar-data}
 m=\lfloor n/q\rfloor,\qquad
 A=2\pi q\left(y-\frac pq\right),\qquad
 B=\frac{2\pi s}{m}\left(\frac rs-y\right).
\end{equation}
Thus $A$ is $q$ times the difference between $2\pi y$ and the left endpoint angle, while $B$ is $s/m$ times the difference between the right endpoint angle and $2\pi y$.

Set
\[
 t=\frac{y-p/q}{r/s-p/q}\in(0,1).
\]
As $y$ moves from the left endpoint to the right, $t$ increases from $0$ to $1$. Since $r/s-p/q=1/(qs)$, we obtain
\begin{equation}\label{eq:angle-budget}
 A=\frac{2\pi t}s,\qquad B=\frac{2\pi(1-t)}{mq},\qquad
 \frac{A+B}{2\pi}=\frac ts+\frac{1-t}{mq}.
\end{equation}
Hence $A,B>0$. For $n\ge3$, we have $s\ge3$ and $mq\ge2$, so
\[
 \frac{A+B}{2\pi}\le\frac t3+\frac{1-t}{2}<\frac12.
\]
Thus $A+B<\pi$.

\begin{definition}\label{def:candidate}
Let $n\ge3$, and suppose $\theta/(2\pi)$ lies strictly between consecutive fractions in $F_n\cap[0,1/2]$. With $m,A,B$ defined by \eqref{eq:scalar-data}, let $K_n(\theta)$ be the unique solution $\rho\in(0,1)$ of
\begin{equation}\label{eq:radial}
 \rho^{s/m}\sin A+\rho^q\sin B=\sin(A+B).
\end{equation}
For $n\ge4$, set $K_n(\theta)=1$ whenever $\theta/(2\pi)\in F_n\cap[0,1/2]$. For $n=3$, we use the solutions inside the Farey intervals and leave the value at $\theta=\pi$, the negative real direction, unspecified.
\end{definition}
The left-hand side of \eqref{eq:radial} is continuous and strictly increasing for $\rho\ge0$. At $\rho=0$ it is zero; at $\rho=1$, its excess over the right-hand side is
\[
 \sin A+\sin B-\sin(A+B)
 =4\sin(A/2)\sin(B/2)\sin((A+B)/2)>0.
\]
Since $\sin(A+B)>0$, there is exactly one solution in $(0,1)$.

The solution varies continuously inside each Farey interval. Indeed, the roots lie in $[0,1]$, and any limit of roots for angles tending to an interior angle must satisfy the equation at that angle. Uniqueness determines this limit.

For $n\ge4$, we have $s\ge3$ and $mq\ge3$. At the endpoint $y=p/q$, $A\to0$ and $B\to2\pi/(mq)\in(0,\pi)$, so the limiting equation reduces to $\rho^q=1$. At $y=r/s$, $B\to0$ and $A\to2\pi/s\in(0,\pi)$, giving $\rho^{s/m}=1$. Thus $\rho\to1$ at both endpoints, and the assigned endpoint values make $K_n$ positive and continuous on $[0,\pi]$ for $n\ge4$.

For $n=3$, the solutions instead tend to $1/2$ as $\theta\uparrow\pi$. On the adjacent interval, $q=2$, $s=3$, $m=1$, $A=2\pi-2\theta$, and $B=3\theta-2\pi$. Dividing \eqref{eq:radial} by $\sin\theta$ and taking the limit gives $2\rho^3+3\rho^2=1$, whose unique solution in $[0,1]$ is $1/2$. Since $-1\in\Theta_3$, the maximal modulus at $\theta=\pi$ is $1$, explaining why no continuous endpoint value is assigned there.

\begin{theorem}[The bound on the modulus]\label{thm:convex-product}\label{thm:jensen}
Let $p,r\in\mathbb Z$ and let $q,s,m$ be positive integers with $q<s$ and $rq-ps=1$. Choose an angle $\vartheta$ satisfying
\[
 \frac pq<\frac{\vartheta}{2\pi}<\frac rs,
\]
and put
\[
 A=q\vartheta-2\pi p,\qquad
 B=\frac{2\pi r-s\vartheta}{m},\qquad e=s-mq.
\]
Assume $A+B<\pi$.

Let $\omega=\rho e^{i\vartheta}$ with $0<\rho<1$, and let $\beta_1,\ldots,\beta_m\in[0,1)$. Define
\[
 u_j=\operatorname{Arg}(\omega^q-\beta_j),\qquad
 M=\operatorname{Arg}(\omega^q-1),
\]
using arguments in $(0,\pi)$. These satisfy $A\le u_j<M<\pi$. Suppose the product relation and the real argument identity both hold:
\begin{equation}\label{eq:analytic-product-phase}
 \begin{gathered}
 \omega^e\prod_{j=1}^m(\omega^q-\beta_j)
       =\prod_{j=1}^m(1-\beta_j),\\
 e\vartheta+\sum_{j=1}^m u_j=2\pi(r-mp).
 \end{gathered}
\end{equation}
Then
\begin{equation}\label{eq:sharp-scalar-inequality}
 \rho^{s/m}\sin A+\rho^q\sin B\le\sin(A+B),
\end{equation}
with equality if and only if $\beta_1=\cdots=\beta_m$. Consequently $\rho\le\rho_*$, where $\rho_*$ is the unique solution in $(0,1)$ of \eqref{eq:radial} with the present values of $q,s,m,A,B$.
\end{theorem}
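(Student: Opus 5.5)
Throughout, write $w=\omega^q=\rho^qe^{iA}$, which lies in the upper half-plane because $0<A<\pi$. As $\beta$ runs over $[0,1)$, the point $w-\beta$ moves horizontally along the line $\operatorname{Im}=\rho^q\sin A$. This gives $A\le u_j<M<\pi$ at once.

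The normalized factor is $g_j=(w-\beta_j)/(1-\beta_j)$. I would parametrize it by its argument $u\in[A,M)$. The law of sines in the triangle $0,\,w,\,w-\beta$ gives
\[
 |w-\beta|=\frac{\rho^q\sin A}{\sin u},\qquad
 \beta=\frac{\rho^q\sin(u-A)}{\sin u},\qquad
 1-\beta=\frac{\sin u-\rho^q\sin(u-A)}{\sin u}.
\]
Hence
\[
 \log|g_j|=\log(\rho^q\sin A)-\log\bigl(\sin u_j-\rho^q\sin(u_j-A)\bigr)=:\Psi(u_j).
\]
The denominator can be written $c\sin(u+\phi)$ with $c>0$. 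It is positive on the allowed range because $1-\beta>0$, so the bracket lies in $(0,\pi)$ there. Since $-\log\sin$ is strictly convex on $(0,\pi)$, $\Psi$ is strictly convex. Moreover, $u\mapsto\beta$ is strictly increasing, so distinct $u_j$ correspond to distinct $\beta_j$.

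Next, take moduli in the product relation: $\rho^e\prod|w-\beta_j|=\prod(1-\beta_j)$, that is, $\sum_j\Psi(u_j)=-e\log\rho$. The argument identity fixes the average argument
\[
 \bar u=\frac{2\pi(r-mp)-e\vartheta}{m}.
\]
Using $e=s-mq$ and $A=q\vartheta-2\pi p$, this simplifies to $\bar u=A+B$, since $2\pi r-s\vartheta=mB$. Jensen's inequality then gives $-e\log\rho\ge m\Psi(A+B)$, with equality iff all $u_j$ are equal, i.e.\ iff all $\beta_j$ agree.

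It remains to unwind this inequality. The condition $A+B<\pi$ is what puts the value $\Psi(A+B)$ in the domain. If $A+B$ is not in $[A,M)$, the averaging still shows it lies in the convex hull of the $u_j$, so the argument goes through; I would only need to check positivity of the denominator there, which follows by convexity of the interval where it is positive. Exponentiating, the inequality reads
\[
 \rho^{-e/m}\ge\frac{\rho^q\sin A}{\sin(A+B)-\rho^q\sin B},
\]
since $\sin(A+B)-\rho^q\sin(A+B-A)=\sin(A+B)-\rho^q\sin B$. Because $\sin(A+B)-\rho^q\sin B>0$, we can rearrange to
\[
 \sin(A+B)-\rho^q\sin B\ge\rho^{q+e/m}\sin A=\rho^{s/m}\sin A,
\]
which is exactly \eqref{eq:sharp-scalar-inequality}.

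Finally, the left-hand side of \eqref{eq:sharp-scalar-inequality} is strictly increasing in $\rho$, so $\rho\le\rho_*$. The main care point is the sign and positivity bookkeeping in the law-of-sines parametrization, together with confirming the identity $\bar u=A+B$.
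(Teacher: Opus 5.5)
Your proof is correct and takes the same approach as the paper. Your bracket $\sin u-\rho^q\sin(u-A)$ equals $\rho^q\sin A\,\mathcal D(u)$, with the paper's $\mathcal D(u)=\sin(M-u)/\sin M$, so your $\Psi$ is the paper's strictly convex $F=-\log\mathcal D$; you reach it via the law of sines rather than the line equation $\Re g_j+c\Im g_j=1$, and the Jensen step and the unwinding are identical. Your hedge about $A+B\notin[A,M)$ is unnecessary, and it is not $A+B<\pi$ that puts $A+B$ in the domain: $A+B$ is the average of the $u_j\in[A,M)$, so it lies in $[A,M)$, where the bracket is positive.
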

\begin{proof}
Write $w=\omega^q=\rho^qe^{iA}=a+ib$, so $b>0$ and $a<1$. Divide each factor by its positive coefficient and set
\[
 g_j=\frac{w-\beta_j}{1-\beta_j},\qquad c=\frac{1-a}{b}>0.
\]
The positive denominator leaves the argument unchanged, so $\operatorname{Arg}g_j=u_j$. Moreover,
\[
 \Re g_j+c\Im g_j
 =\frac{a-\beta_j+cb}{1-\beta_j}=1.
\]
Thus all $g_j$ lie on the same straight line. Define $\mathcal D(u)=\cos u+c\sin u$. Writing $g_j=|g_j|e^{iu_j}$ in the line equation gives
\[
 |g_j|\mathcal D(u_j)=1.
\]
From $w-1=(a-1)+ib$, we have $c=-\cot M$, so
\[
 \mathcal D(u)=\frac{\sin(M-u)}{\sin M}>0\qquad(0<u<M).
\]
Define $F(u)=-\log\mathcal D(u)$ on this interval. Then $F(u_j)=\log|g_j|$. All $u_j$, and hence their average, lie in this interval.

Since $\mathcal D''=-\mathcal D$, differentiating $F$ and $u(\beta)=\operatorname{Arg}(w-\beta)$ gives
\begin{equation}\label{eq:one-factor-convexity}
 \begin{gathered}
 F''(u)=1+\left(\frac{\mathcal D'(u)}{\mathcal D(u)}\right)^2\ge1,\\
 \frac{du}{d\beta}=\frac{\rho^q\sin A}{|\omega^q-\beta|^2}>0.
 \end{gathered}
\end{equation}
Thus $F$ is strictly convex, and $u(\beta)$ is strictly increasing.

The real argument identity gives
\[
 \bar u:=\frac1m\sum_{j=1}^m u_j
 =\frac{2\pi(r-mp)-e\vartheta}{m}=A+B.
\]
Dividing the product relation by $\prod_j(1-\beta_j)$ and taking logarithms of moduli gives
\[
 \sum_{j=1}^m F(u_j)=-e\log\rho.
\]
Jensen's inequality therefore yields
\[
 F(A+B)=F(\bar u)\le\frac1m\sum_{j=1}^mF(u_j)
 =-\frac em\log\rho.
\]
Since $F=-\log\mathcal D$, this is $\mathcal D(A+B)\ge\rho^{e/m}$. Now
\[
 \mathcal D(A+B)
 =\frac{\sin(A+B)-\rho^q\sin B}{\rho^q\sin A}.
\]
Multiplying by $\rho^q\sin A>0$ and using $q+e/m=s/m$ gives \eqref{eq:sharp-scalar-inequality}.

Equality in \eqref{eq:sharp-scalar-inequality} holds exactly when Jensen's inequality is an equality, which by strict convexity means that all $u_j$ agree. The strict increase of $u(\beta)$ makes this equivalent to $\beta_1=\cdots=\beta_m$. Finally, the left-hand side of \eqref{eq:sharp-scalar-inequality} increases strictly with $\rho$ and equals the right-hand side at $\rho_*$, so $\rho\le\rho_*$.
\end{proof}


\subsection{Stochastic realization of the bound}
To attain the bound, we construct a matrix using the graph pattern of Johnson and Paparella \cite[Theorem~3.2]{JP}. We give the construction explicitly below, verify that the matrix is row-stochastic, and derive its characteristic polynomial directly. The graph contains $m$ cycles of length $q$, joined by connections forming a further cycle of length $s$. In cycle $j$, the edge returning to its starting vertex has weight $\beta_j$, while the connection to the next cycle has weight $1-\beta_j$. We allow the $\beta_j$ to vary independently. For the polynomial calculation, we group the determinant terms according to directed cycles with no vertices in common. Taking all $\beta_j$ equal will produce a stochastic eigenvalue attaining the bound. Appendix~\ref{app:matrix-family} describes the structure of this matrix family and its limits at the Farey endpoints.

Let $\rho=K_n(\theta)$ for an angle inside a Farey interval, and put $\omega=\rho e^{2\pi i y}$, using the endpoint choice and parameters above. Choose the common weights
\begin{equation}\label{eq:equal-parameters}
 \alpha=\frac{\rho^{s/m}\sin A}{\sin(A+B)},\qquad
 \beta=\frac{\rho^q\sin B}{\sin(A+B)}.
\end{equation}
Both quotients are positive, and \eqref{eq:radial} gives $\alpha+\beta=1$, so $\alpha,\beta\in(0,1)$. Comparing real and imaginary parts gives
\begin{equation}\label{eq:factor-resolution}
 \omega^q-\beta
   =\frac{\rho^q\sin A}{\sin(A+B)}e^{i(A+B)}
   =\alpha\rho^{q-s/m}e^{i(A+B)}.
\end{equation}
The positive coefficient in \eqref{eq:factor-resolution} shows that every factor $\omega^q-\beta$ has argument $A+B$. Moreover,
\[
 (s-mq)2\pi y+m(A+B)=2\pi(r-mp).
\]
Raising \eqref{eq:factor-resolution} to the $m$-th power and using this equality gives
\begin{equation}\label{eq:ito-attaining}
 \omega^s(\omega^q-\beta)^m=\alpha^m\omega^{mq}.
\end{equation}
Equation~\eqref{eq:ito-attaining} is the Ito equation, and the displayed argument equality gives the real identity in \eqref{eq:analytic-product-phase} with all coefficients equal. If $y=1-x$, conjugating $\omega$ restores the original argument $\theta$.

For $n\ge4$, fix a Farey interval with the parameters in \eqref{eq:scalar-data}. We show that, as $\vartheta=2\pi y$ increases from $2\pi p/q$ to $2\pi r/s$, $\beta(\vartheta)$ decreases strictly from $1$ to $0$. Thus each $\beta\in(0,1)$ corresponds to exactly one point of this candidate arc.

Write $\rho=\rho(\vartheta)$ for the solution of \eqref{eq:radial}, and define
\[
 \mathcal R(\rho,\vartheta)
 =\rho^{s/m}\sin A+\rho^q\sin B-\sin(A+B),
\]
where $A=q\vartheta-2\pi p$ and $B=(2\pi r-s\vartheta)/m$. Its derivative with respect to $\rho$ is
\[
 \mathcal R_\rho
 =\frac{s}{m}\rho^{s/m-1}\sin A+q\rho^{q-1}\sin B>0.
\]
The mean-value theorem, applied in $\rho$ and in $\vartheta$, together with continuity of $\rho(\vartheta)$, gives
\[
 \rho'=-\frac{\mathcal R_\vartheta}{\mathcal R_\rho}
\]
along the curve. This derivative is continuous, and repeated differentiation shows that $\rho$ is smooth. Formula~\eqref{eq:equal-parameters} then makes $\beta$ smooth as well. Primes below denote derivatives with respect to $\vartheta$.

Set $e=s-mq$ and
\[
 H(\omega,\beta)=\omega^e(\omega^q-\beta)^m-(1-\beta)^m.
\]
For $\omega\ne0$, this is complex differentiable even if $e<0$, and
\[
 H_\omega
 =\omega^{e-1}(\omega^q-\beta)^{m-1}
       (s\omega^q-e\beta).
\]
Because $\operatorname{Im}(\omega^q)=\rho^q\sin A>0$, both $\omega^q-\beta$ and $s\omega^q-e\beta$ are nonzero. Together with $\omega\ne0$, this gives $H_\omega\ne0$. At a root, the derivative of the Ito polynomial $\omega^{mq}H$ is $\omega^{mq}H_\omega\ne0$, so the root is simple: its multiplicity is one.

With $\omega(\vartheta)=\rho(\vartheta)e^{i\vartheta}$, equation~\eqref{eq:ito-attaining} gives $H(\omega(\vartheta),\beta(\vartheta))=0$. Differentiating yields
\[
 H_\omega\omega'+H_\beta\beta'=0,\qquad
 \omega'=e^{i\vartheta}(\rho'+i\rho)\ne0.
\]
The last inequality follows from $\rho>0$. If $\beta'=0$, the first identity would force $H_\omega\omega'=0$, a contradiction. Thus $\beta'$ never vanishes, and its continuity makes its sign constant on the interval.

Since $s,mq\ge3$, the endpoint limits $\rho\to1$ and \eqref{eq:equal-parameters} give $\beta\to1$ at $y=p/q$ and $\beta\to0$ at $y=r/s$. These limits force $\beta'<0$. Hence $\beta$ decreases strictly from $1$ to $0$, taking every value in $(0,1)$ exactly once. If $y=x$, then $\vartheta=\theta$. If $y=1-x$, then $\vartheta=2\pi-\theta$, so increasing $\vartheta$ means decreasing the original angle $\theta$.

\Needspace{14\baselineskip}
\begin{theorem}[Stochastic realization]
\label{thm:sparse-realization}\label{thm:independent-realization}
Let $m,q,s$ be positive integers with $s>(m-1)q$, and set $n_0=\max\{mq,s\}$. For each $\beta\in(0,1)$, there is a real row-stochastic matrix of order $n_0$ having every nonzero solution $\lambda$ of
\begin{equation}\label{eq:realizing-polynomial}
 \lambda^s(\lambda^q-\beta)^m=(1-\beta)^m\lambda^{mq}
\end{equation}
as an eigenvalue.

More generally, for any
\[
 \boldsymbol\beta=(\beta_0,\ldots,\beta_{m-1})\in[0,1]^m,
 \qquad \alpha_j=1-\beta_j,
\]
there is a row-stochastic matrix $A(\boldsymbol\beta)$ of order $n_0$ with
\begin{equation}\label{eq:independent-characteristic}
 \det(tI-A(\boldsymbol\beta))
 =t^{n_0-mq}\prod_{j=0}^{m-1}(t^q-\beta_j)
  -t^{n_0-s}\prod_{j=0}^{m-1}\alpha_j.
\end{equation}
Every entry is $0$, $1$, $\beta_j$, or $1-\beta_j$ for some $j$, so the matrix depends continuously on $\boldsymbol\beta$. Taking all $\beta_j=\beta$ gives the matrix in the first assertion.
\end{theorem}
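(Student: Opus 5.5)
We will build $A(\boldsymbol\beta)$ as the weighted adjacency matrix of the Johnson--Paparella graph and compute $\det(tI-A(\boldsymbol\beta))$ by the cycle expansion of the determinant. The graph has $m$ local directed cycles $\mathcal C_0,\ldots,\mathcal C_{m-1}$ of length $q$, with weights $\beta_0,\ldots,\beta_{m-1}$. It also has one long cycle $\mathcal C$ of length $s$, with weight $\prod_j\alpha_j$, that meets every local cycle. We require that there be no other directed cycles. Then the disjoint cycle collections are exactly the subsets of local cycles and the singleton $\{\mathcal C\}$. The expansion
\[
 \det(tI-A)=\sum_{\mathcal S}(-1)^{|\mathcal S|}\Bigl(\prod_{\mathcal Z\in\mathcal S}\operatorname{wt}\mathcal Z\Bigr)\,t^{\,n_0-\sum_{\mathcal Z\in\mathcal S}|\mathcal Z|}
\]
then gives $t^{n_0}\prod_j(1-\beta_jt^{-q})-t^{n_0-s}\prod_j\alpha_j$, which is \eqref{eq:independent-characteristic}. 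We will state this expansion as the standard consequence of the Leibniz formula: a permutation contributing a nonzero term decomposes into directed cycles of the graph together with fixed points, and each cycle of length $k$ carries sign $(-1)^{k-1}$ that combines with the $-A$ entries.

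\textbf{Case $s\ge mq$.} Use vertices $0,\ldots,s-1$, so $n_0=s$. Block $j$ consists of the vertices $jq,\ldots,jq+q-1$. Inside each block, $i\to i+1$ has weight $1$. The last vertex of block $j$ sends weight $\beta_j$ back to $jq$ and weight $\alpha_j$ onward to $(j+1)q$. The vertices $mq,\ldots,s-1$ form a weight-$1$ path ending with an edge to $0$. When $s=mq$, the last block's forward edge goes directly to $0$.

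\textbf{Case $(m-1)q<s<mq$.} Use vertices $0,\ldots,mq-1$, so $n_0=mq$, and put $e'=s-(m-1)q\in[1,q)$. Blocks $0,\ldots,m-2$ are as before. In block $m-1$, the branching vertex is $(m-1)q+e'-1$. It sends $\beta_{m-1}$ to the next vertex of its block, which continues with weight-$1$ edges around to $(m-1)q$, and it sends $\alpha_{m-1}$ to $0$. The local cycle still has length $q$, while the long cycle has length $(m-1)q+e'=s$. The hypothesis $s>(m-1)q$ is used exactly here, to ensure $e'\ge1$.

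In both cases each row has either a single entry $1$ or the pair $\beta_j,\alpha_j$, so the matrix is row-stochastic. Its entries lie in $\{0,1,\beta_j,1-\beta_j\}$, so it depends continuously on $\boldsymbol\beta$. Allowing $\beta_j\in\{0,1\}$ only sets some weights to zero, and the formula persists.

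The step needing the most care is that there are no other directed cycles. The vertices of each block are visited in a fixed linear order. The only way to leave a block is the forward edge at its branching vertex, and the forward edges advance through the blocks in order $0,1,\ldots,m-1$ and then back to $0$. Any cycle must therefore either stay inside one block, giving the local cycle, or pass through every block once, giving $\mathcal C$. This also shows that $\mathcal C$ shares a vertex with every $\mathcal C_j$.

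For the first assertion, take all $\beta_j=\beta$. Both exponents $n_0-mq$ and $n_0-s$ are nonnegative. Multiplying \eqref{eq:independent-characteristic} by $t^{\,s+mq-n_0}$ shows that, for $\lambda\ne0$, $\det(\lambda I-A)=0$ is equivalent to $\lambda^s(\lambda^q-\beta)^m=(1-\beta)^m\lambda^{mq}$. Hence every nonzero solution of \eqref{eq:realizing-polynomial} is an eigenvalue.
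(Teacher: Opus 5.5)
Your proposal is correct and follows the paper's proof. It uses the same weighted graph with $m$ local $q$-cycles and one connecting $s$-cycle through all of them, checks row-stochasticity by inspection, and applies the cycle expansion of $\det(tI-A)$. Your only variation is for $s<mq$: you shorten the long cycle by exiting the last block early, rather than redirecting the $\alpha_{m-1}$-arrow to $v_{0,mq-s}$. This produces the same cycle structure.

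The one case you pass over is $m=1$, $s=q$. There both arrows from the last vertex end at $0$ and merge into the single entry $\beta_0+\alpha_0=1$. The paper treats this case separately. Your expansion still gives $t^q-1$ there, provided you read it over the two parallel-edge cycles (expanding that entry as a sum) rather than over cycles of the simple digraph.
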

\begin{proof}
We first construct the family $A(\boldsymbol\beta)$ and compute its characteristic polynomial. Taking all parameters equal will then give the eigenvalues in the first assertion.

A directed graph specifies the matrix entries. Its vertices index both the rows and the columns, and an arrow from $u$ to $v$ carries a nonnegative number $w_e$, called its weight, which contributes $w_e$ to row $u$, column $v$:
\[
 A_{uv}=\sum_{\text{arrows }e:u\to v}w_e.
\]
An empty sum is zero. If two arrows have the same starting and ending vertices, their weights are added in that one matrix entry. We will choose the weights leaving each vertex to sum to one. Thus each row distributes a total of one among its destination columns, and $A_{uv}$ can be read as the probability of moving from $u$ to $v$ in one step.

For each $j=0,\ldots,m-1$, create a block of vertices
\[
 v_{j,0},\ldots,v_{j,q-1}.
\]
The block after $m-1$ is block zero. Within block $j$, draw the arrows $v_{j,k}\to v_{j,k+1}$ with weight one for $0\le k<q-1$. At its last vertex $v_{j,q-1}$, draw one arrow back to the first vertex $v_{j,0}$ with weight $\beta_j$, and another to the first vertex of the following block with weight $\alpha_j=1-\beta_j$. These two numbers divide the row's unit total between the two arrows. The first arrow closes a cycle of $q$ steps within the block, whose weights multiply to $\beta_j$. Following the arrows labelled $\alpha_j$ instead joins the blocks in the order $0,1,\ldots,m-1,0$. We adjust only the arrow labelled $\alpha_{m-1}$, from the last block back to block zero, so that this latter cycle has length $s$.

If $s\le mq$, redirect that arrow to $v_{0,d}$, where $d=mq-s$, keeping its weight $\alpha_{m-1}$. The hypothesis $s>(m-1)q$ gives $0\le d<q$, so this vertex exists. The graph still has $mq$ vertices. The cycle through all blocks now omits the first $d$ vertices of block zero and therefore has length $mq-d=s$.

If $s>mq$, replace that arrow by a path containing $s-mq$ new vertices between $v_{m-1,q-1}$ and $v_{0,0}$. Its first arrow carries weight $\alpha_{m-1}$, and all subsequent arrows carry weight one. This adds $s-mq$ steps to the cycle through all blocks, giving length $s$ and a graph with $s$ vertices. Each new vertex has just one outgoing arrow of weight one.

In both cases, every row has either one outgoing weight equal to one or the two outgoing weights $\beta_j$ and $\alpha_j$, whose sum is one. Adding weights with the same destination preserves that sum. All entries are nonnegative, so $A$ is row-stochastic of order $n_0=\max\{mq,s\}$. Its entries are $0$, $1$, $\beta_j$, or $\alpha_j$, as asserted.

If $m=1$ and $s=q$, the two arrows from the last vertex of the only block both end at its first vertex. They therefore give the single matrix entry
\[
 A_{v_{0,q-1},v_{0,0}}=\beta_0+\alpha_0=1.
\]
The resulting matrix moves each vertex to the next and the last back to the first: it is the cyclic permutation matrix of order $q$. Its characteristic polynomial is
\[
 t^q-1=(t^q-\beta_0)-\alpha_0,
\]
which is \eqref{eq:independent-characteristic} in this case. In every other case the two arrows from the last vertex of each block have distinct destinations.

Figure~\ref{fig:realizing-graphs} illustrates the two constructions for $m=2$ and $q=3$, with common weights $\beta_j=\beta$ and $\alpha_j=\alpha$. Its label $\delta_0=mq-s$ describes the route from the last block into block zero: if $\delta_0\ge0$, the arrow ends at $v_{0,\delta_0}$; if $\delta_0<0$, the route contains $-\delta_0$ added vertices before reaching $v_{0,0}$.

\begin{figure}[htbp]
\centering
\begin{tikzpicture}[x=1.65cm,y=1.12cm,
  every node/.style={font=\small},
  vertex/.style={circle,draw,fill=white,inner sep=1.2pt,minimum size=7mm,
                 font=\scriptsize},
  edge/.style={->,>=stealth,thick},
  weight/.style={font=\small,fill=white,inner sep=1pt}]
\begin{scope}
  \node at (1,1.55) {$(a)\ s=5,\quad\delta_0=1$};
  \node[vertex] (a00) at (0,0) {$v_{0,0}$};
  \node[vertex] (a01) at (1,0) {$v_{0,1}$};
  \node[vertex] (a02) at (2,0) {$v_{0,2}$};
  \node[vertex] (a10) at (2,-2.1) {$v_{1,0}$};
  \node[vertex] (a11) at (1,-2.1) {$v_{1,1}$};
  \node[vertex] (a12) at (0,-2.1) {$v_{1,2}$};
  \draw[edge] (a00) -- (a01);
  \draw[edge] (a01) -- (a02);
  \draw[edge] (a10) -- (a11);
  \draw[edge] (a11) -- (a12);
  \draw[edge] (a02) to[out=130,in=50]
      node[weight,above] {$\beta$} (a00);
  \draw[edge] (a12) to[out=-50,in=-130]
      node[weight,below] {$\beta$} (a10);
  \draw[edge] (a02) -- node[weight,right] {$\alpha$} (a10);
  \draw[edge] (a12) -- node[weight,left] {$\alpha$} (a01);
\end{scope}
\begin{scope}[xshift=7.0cm]
  \node at (1,1.55) {$(b)\ s=7,\quad\delta_0=-1$};
  \node[vertex] (b00) at (0,0) {$v_{0,0}$};
  \node[vertex] (b01) at (1,0) {$v_{0,1}$};
  \node[vertex] (b02) at (2,0) {$v_{0,2}$};
  \node[vertex] (b10) at (2,-2.1) {$v_{1,0}$};
  \node[vertex] (b11) at (1,-2.1) {$v_{1,1}$};
  \node[vertex] (b12) at (0,-2.1) {$v_{1,2}$};
  \node[vertex] (bw) at (0,-1.05) {$w$};
  \draw[edge] (b00) -- (b01);
  \draw[edge] (b01) -- (b02);
  \draw[edge] (b10) -- (b11);
  \draw[edge] (b11) -- (b12);
  \draw[edge] (b02) to[out=130,in=50]
      node[weight,above] {$\beta$} (b00);
  \draw[edge] (b12) to[out=-50,in=-130]
      node[weight,below] {$\beta$} (b10);
  \draw[edge] (b02) -- node[weight,right] {$\alpha$} (b10);
  \draw[edge] (b12) -- node[weight,left] {$\alpha$} (bw);
  \draw[edge] (bw) -- (b00);
\end{scope}
\end{tikzpicture}
\caption{Graphs of the realizing matrices for $m=2$ and $q=3$, with common weights $\beta_j=\beta$ and $\alpha_j=\alpha=1-\beta$. Each arrow's label contributes that amount to the matrix entry with its starting vertex as row and its ending vertex as column; unlabelled arrows contribute one. In $(a)$, the arrow from $v_{1,2}$ labelled $\alpha$ ends at $v_{0,1}$, giving a cycle of five steps through both blocks in a graph with six vertices. In $(b)$, this route passes through the added vertex $w$, giving a cycle of seven steps in a graph with seven vertices. The arrows labelled $\beta$ close the two three-step cycles within the blocks.}
\label{fig:realizing-graphs}
\end{figure}

For the remaining cases, we compute the determinant by listing the possible simple directed cycles: closed paths following the arrows, with no repeated vertex except the start at the end. The product of the arrow weights along a cycle is the product of the corresponding matrix entries in the determinant expansion.

Every such cycle contains the last vertex of a block. If it uses the arrow from that vertex back to the first vertex of the same block, the arrows within the block bring it back to the same last vertex. It is therefore exactly that block's $q$-cycle, whose weights multiply to $\beta_j$. If it uses none of these arrows back to first vertices, it follows the paths joining successive blocks and is the single $s$-cycle, whose weights multiply to $\prod_j\alpha_j$. The $m$ cycles within blocks have no vertices in common, while the $s$-cycle contains the last vertex of every block and hence meets each of them. This also covers $q=1$, when the cycles within blocks are loops, and $m=1$, $s<q$, when the arrow labelled $\alpha_0$ closes a shorter cycle inside the only block.

In the Leibniz expansion of $\det(tI-A)$, select a collection of directed cycles with no vertices in common. A cycle of length $\ell$ contributes $\ell$ negative matrix entries and has permutation sign $(-1)^{\ell-1}$, giving one minus sign per cycle. A loop is selected by taking the term $-A_{uu}$ from $t-A_{uu}$, so it has the same sign. Every vertex outside the selected cycles contributes $t$. Hence a collection of $k$ cycles covering $v$ vertices contributes $(-1)^k t^{n_0-v}$ times the product of its arrow weights.

Any subset $J$ of the cycles within blocks can be selected, giving the total contribution
\[
 \sum_{J\subseteq\{0,\ldots,m-1\}}
 (-1)^{|J|}t^{n_0-q|J|}\prod_{j\in J}\beta_j
 =t^{n_0-mq}\prod_{j=0}^{m-1}(t^q-\beta_j).
\]
The $s$-cycle meets every cycle within a block, so it can only be selected alone. Its contribution is $-t^{n_0-s}\prod_j\alpha_j$. Adding these contributions gives \eqref{eq:independent-characteristic}. Zero arrow weights make the corresponding contributions vanish, so the formula holds for every parameter tuple in $[0,1]^m$. It is the full characteristic polynomial, including zero roots and their algebraic multiplicities.

Finally, take all $\beta_j=\beta\in(0,1)$ and denote the resulting matrix by $A_\beta$. For a nonzero solution $\lambda$ of \eqref{eq:realizing-polynomial}, multiplying the characteristic polynomial at $\lambda$ by $\lambda^{mq+s-n_0}$ gives
\[
 \lambda^{mq+s-n_0}\det(\lambda I-A_\beta)
 =\lambda^s(\lambda^q-\beta)^m-(1-\beta)^m\lambda^{mq}=0.
\]
Since $\lambda\ne0$, this implies $\det(\lambda I-A_\beta)=0$. The matrix $\lambda I-A_\beta$ is therefore singular, so there is a vector $x\in\mathbb C^{n_0}$, $x\ne0$, with $(\lambda I-A_\beta)x=0$, or equivalently $A_\beta x=\lambda x$. Thus $\lambda$ is an eigenvalue of $A_\beta$, proving the first assertion.
\end{proof}

For $n\ge4$, consider the Farey interval used above, with $m=\lfloor n/q\rfloor$. Farey adjacency gives $q+s>n$, while $mq\le n$ and $q,s\le n$. Therefore
\[
 s>n-q\ge(m-1)q,\qquad n_0=\max\{mq,s\}\le n.
\]
The realization theorem thus applies. With the weights in \eqref{eq:equal-parameters}, equation~\eqref{eq:ito-attaining} makes $\omega=K_n(\theta)e^{2\pi i y}\ne0$ an eigenvalue of a real row-stochastic matrix of order $n_0$. If $y=x$, then $\omega=K_n(\theta)e^{i\theta}$. If $y=1-x$, its conjugate has that value and is also an eigenvalue because the matrix is real. Adjoining an identity block if necessary gives order $n$ and retains this eigenvalue. The endpoint roots of unity are realized by cyclic permutation matrices of orders $q$ and $s$, extended to order $n$ in the same way. Consequently $K_n(\theta)e^{i\theta}\in\Theta_n$ on every ray for $n\ge4$.

Consider a radial maximum $z=\rho e^{i\theta}\in\Theta_n$ with $0<\rho<1$ and $0<\theta<\pi$. Let $N\ge4$ be the least order of a stochastic matrix having $z$ as an eigenvalue. Since $\Theta_N\subseteq\Theta_n$, $z$ is also maximal on its ray in $\Theta_N$. Theorems~\ref{thm:product} and \ref{thm:convex-product} give $\rho\le K_N(\theta)$. Theorem~\ref{thm:sparse-realization} realizes $K_N(\theta)e^{i\theta}$ in $\Theta_N$, giving the reverse inequality. Hence $\rho=K_N(\theta)$, and the equality condition forces all coefficients $\beta_j$ in the geometric product to agree. To identify the boundary at the prescribed order $n$, it remains to prove $K_N(\theta)\le K_n(\theta)$ for $N\le n$ directly from the scalar equations.

\section{Farey refinement and completion}
We compare candidate moduli at successive orders through two changes in the Farey data. On an interval whose endpoint fractions stay the same, the number $m$ of product factors may increase. An interval with endpoints $a/b<c/d$ may instead be split by inserting $(a+c)/(b+d)$, obtained by adding the endpoint numerators and denominators separately. We then compare the old and new scalar roots at the same angle on each of the two new open intervals. The first comparison follows from the strict Jensen inequality in Theorem~\ref{thm:convex-product}.

\begin{lemma}\label{lem:add-factor}
Let $p,r$ be integers and $q,s$ positive integers with $q<s$ and $rq-ps=1$. Fix $p/q<y<r/s$ and an integer $m\ge1$, and put
\[
 A=2\pi(qy-p),\qquad B_k=\frac{2\pi(r-sy)}{k}
 \quad(k=m,m+1).
\]
Assume $A+B_m<\pi$. For $k=m,m+1$, let $\rho_k\in(0,1)$ be the unique solution of
\[
 \rho_k^{s/k}\sin A+\rho_k^q\sin B_k=\sin(A+B_k).
\]
Then $\rho_{m+1}>\rho_m$.
\end{lemma}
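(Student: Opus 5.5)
The plan is to derive the inequality from the strict Jensen bound in Theorem~\ref{thm:convex-product}, applied with $m+1$ factors. We build an admissible $(m+1)$-factor configuration at modulus $\rho_m$ whose coefficients are not all equal. The strict inequality then says that $\rho_m$ lies strictly below the $(m+1)$-factor root.

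First set $\vartheta=2\pi y$ and $\omega=\rho_m e^{i\vartheta}$, with the equal weights $\alpha,\beta$ of \eqref{eq:equal-parameters} computed for $k=m$. Since $A,B_m>0$ and $A+B_m<\pi$, both weights are positive. The root equation gives $\alpha+\beta=1$, so $\beta\in(0,1)$. By \eqref{eq:factor-resolution}, each factor $\omega^q-\beta$ has argument $A+B_m$. The computation leading to \eqref{eq:ito-attaining} then gives the $m$-factor product relation and the real argument identity of \eqref{eq:analytic-product-phase}, with $e=s-mq$:
\[
\omega^{e}(\omega^q-\beta)^m=\alpha^m,\qquad e\vartheta+m(A+B_m)=2\pi(r-mp).
\]

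Next, append the factor $\omega^q-\beta_{m+1}$ with $\beta_{m+1}=0$, and lower the exponent to $e'=s-(m+1)q=e-q$. The product is unchanged:
\[
\omega^{e-q}\,\omega^q(\omega^q-\beta)^m=\alpha^m\cdot 1 .
\]
Because $0<A<\pi$, the new factor $\omega^q=\rho_m^qe^{iA}$ has principal argument $u_{m+1}=A$. The real identity becomes
\[
(e-q)\vartheta+A+m(A+B_m)=2\pi(r-mp)+(A-q\vartheta)=2\pi\bigl(r-(m+1)p\bigr).
\]
This is exactly the argument identity for $m+1$ factors.

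We also check the hypothesis $A+B_{m+1}<\pi$. It holds because $B_{m+1}<B_m$, and $A+B_m<\pi$ is assumed. All the other hypotheses of Theorem~\ref{thm:convex-product} hold with $m+1$ in place of $m$, and the coefficients $\beta,\dots,\beta,0$ are not all equal. The theorem therefore gives the strict inequality
\[
\rho_m^{s/(m+1)}\sin A+\rho_m^q\sin B_{m+1}<\sin(A+B_{m+1}).
\]
The left-hand side is strictly increasing in $\rho$, and it equals the right-hand side exactly at $\rho_{m+1}$. Hence $\rho_m<\rho_{m+1}$.

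The only delicate point is the bookkeeping of the real argument identity, namely that adding the factor $\omega^q$ shifts the turn count by exactly $p$. This is the displayed cancellation $A-q\vartheta=-2\pi p$.
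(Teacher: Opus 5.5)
Your proposal is correct and follows the paper's proof essentially step for step. Like the paper, you take the equal-weight $m$-factor configuration at $\rho_m$, append the factor $\omega^q$ with $\beta_{m+1}=0$ while lowering the exponent by $q$, check that the real argument identity shifts to $2\pi\bigl(r-(m+1)p\bigr)$, and then apply the strict equality case of Theorem~\ref{thm:convex-product} with $m+1$ factors together with monotonicity in $\rho$.
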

\begin{proof}
Put $\rho=\rho_m$ and $\omega=\rho e^{2\pi i y}\ne0$. Formula~\eqref{eq:equal-parameters} gives
\[
 \beta=\frac{\rho^q\sin B_m}{\sin(A+B_m)}\in(0,1).
\]
We use this same value in all $m$ factors: $\beta_1=\cdots=\beta_m=\beta$. Equation~\eqref{eq:factor-resolution} then gives
\[
 \omega^{s-mq}(\omega^q-\beta)^m=(1-\beta)^m,
\]
and each factor $\omega^q-\beta$ has argument $A+B_m$.

Add the factor $\omega^q-0=\omega^q$, corresponding to a further parameter $\beta_{m+1}=0$. Its argument is $A$. Reducing the power of $\omega$ before the product by $q$ compensates for this addition:
\[
 \omega^{s-(m+1)q}(\omega^q-\beta)^m\omega^q
 =\omega^{s-mq}(\omega^q-\beta)^m=(1-\beta)^m.
\]
The added parameter contributes $1-0=1$ on the right. The arguments also satisfy the required real identity:
\[
 [s-(m+1)q]2\pi y+m(A+B_m)+A=2\pi[r-(m+1)p].
\]

Since $B_{m+1}<B_m$, we have $A+B_{m+1}<\pi$. Theorem~\ref{thm:convex-product} therefore applies with $m+1$ factors. The parameters $(\beta,\ldots,\beta,0)$ are not all equal because $\beta>0$, so its equality condition gives
\[
 \rho^{s/(m+1)}\sin A+\rho^q\sin B_{m+1}
 <\sin(A+B_{m+1}).
\]
The left-hand side increases strictly with $\rho$. It therefore reaches $\sin(A+B_{m+1})$ at a radius larger than $\rho_m$, giving $\rho_{m+1}>\rho_m$.
\end{proof}

To compare the roots after inserting $(p+r)/(q+s)$ between $p/q<r/s$, we evaluate each new scalar equation at the old root. We need its right-hand side to exceed its left-hand side; since the latter increases with the radius, this puts the new root above the old one. Below, for suitable distinct $a,b\in\mathbb C$ and a real angle $\chi$, these differences become positive multiples of
\[
 \operatorname{Im}\!\left(e^{i\chi}(\bar a-\bar b)(a^m-b^m)\right).
\]
The following lemma determines the sign from the derivative values along the segment joining $a$ and $b$. Geometrically, $\operatorname{Im}(\bar v w)=\operatorname{Re}(v)\operatorname{Im}(w)-\operatorname{Im}(v)\operatorname{Re}(w)$ is the determinant of the planar vectors $v,w$. For $v\ne0$, a positive value places $w$ to the left of the line through the origin directed by $v$, and a negative value places it to the right.

\begin{lemma}\label{thm:sector-difference}
Let $a,b\in\mathbb C$, $a\ne b$, and let $f$ be complex differentiable in an open neighborhood of the straight line segment $[a,b]$. Fix a real angle $\chi$. Suppose every derivative value, after rotation through $\chi$, has positive imaginary part:
\[
 \operatorname{Im}\bigl(e^{i\chi}f'(z)\bigr)>0
 \qquad\text{for every }z\in[a,b].
\]
Then
\[
 \operatorname{Im}\!\left(e^{i\chi}
          (\bar a-\bar b)(f(a)-f(b))\right)>0.
\]
\end{lemma}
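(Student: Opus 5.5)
The plan is to apply the fundamental theorem of calculus along the segment and then read off a sign from the integrand. Parametrize $z(t)=b+t(a-b)$ for $t\in[0,1]$. Since $f$ is complex differentiable on a neighborhood of $[a,b]$, the composite $t\mapsto f(z(t))$ is a differentiable complex-valued function of the real variable $t$, with derivative $f'(z(t))(a-b)$. This derivative is continuous, because $f'$ is again holomorphic and therefore continuous. Applying the one-variable fundamental theorem of calculus to the real and imaginary parts gives
\[
 f(a)-f(b)=(a-b)\int_0^1 f'(z(t))\,dt.
\]

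Next, multiply both sides by $e^{i\chi}(\bar a-\bar b)$. The factor $(\bar a-\bar b)(a-b)=|a-b|^2$ is a positive real number, so
\[
 e^{i\chi}(\bar a-\bar b)(f(a)-f(b))
 =|a-b|^2\int_0^1 e^{i\chi}f'(z(t))\,dt.
\]
Taking imaginary parts commutes with the real integral. The result is
\[
 \operatorname{Im}\!\left(e^{i\chi}(\bar a-\bar b)(f(a)-f(b))\right)
 =|a-b|^2\int_0^1\operatorname{Im}\bigl(e^{i\chi}f'(z(t))\bigr)\,dt.
\]

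To finish, note that the integrand is continuous on the compact interval $[0,1]$ and strictly positive by hypothesis. It therefore has a positive minimum, so the integral is strictly positive. Since $a\ne b$, the factor $|a-b|^2$ is also positive, and the claimed inequality follows.

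The only point needing care is the justification of the fundamental theorem of calculus along the segment. That requires continuity of $t\mapsto f'(z(t))$. For the intended application $f(z)=z^m$ this is immediate because $f'$ is a polynomial. In general it follows from the standard fact that derivatives of holomorphic functions are holomorphic. Alternatively, one can avoid that fact and use only that $f'$ exists everywhere on the segment: apply the mean value theorem to the real function $t\mapsto\operatorname{Im}\bigl(e^{i\chi}(\bar a-\bar b)f(z(t))\bigr)$, whose derivative is $|a-b|^2\operatorname{Im}\bigl(e^{i\chi}f'(z(t))\bigr)>0$. That function is then strictly increasing, so its value at $t=1$ exceeds its value at $t=0$, which again gives the claimed inequality.
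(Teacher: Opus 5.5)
Your proof is correct and follows the paper's argument essentially verbatim: parametrize the segment from $b$ to $a$, apply the fundamental theorem of calculus to real and imaginary parts, multiply by $\bar a-\bar b$ to produce the factor $|a-b|^2$, and read off the sign from the rotated derivative. Your remark on regularity (continuity of $f'$, or the mean-value alternative that needs only pointwise differentiability along the segment) is a small addition; the paper sidesteps that issue by noting that it applies the lemma only to $f(z)=z^m$.
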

\begin{proof}
Parametrize the segment from $b$ to $a$ by $z(t)=b+t(a-b)$, $0\le t\le1$. The chain rule gives
\[
 \frac{d}{dt}f(z(t))=(a-b)f'(z(t)).
\]
Apply the ordinary fundamental theorem of calculus separately to the real and imaginary parts. Multiplying the resulting identity by $\bar a-\bar b$ gives
\begin{equation}\label{eq:sector-difference}
 (\bar a-\bar b)(f(a)-f(b))
 =|a-b|^2\int_0^1 f'\bigl(b+t(a-b)\bigr)\,dt.
\end{equation}
After multiplication by $e^{i\chi}$, the imaginary part of the right-hand side is
\[
 |a-b|^2\int_0^1
 \operatorname{Im}\!\left(e^{i\chi}f'\bigl(b+t(a-b)\bigr)\right)\,dt>0.
\]
Indeed, $|a-b|^2>0$ and the integrand is positive at every point by hypothesis. This proves the claim.
\end{proof}
Below we use the lemma only for $f(z)=z^m$. The integrand is then $m(b+t(a-b))^{m-1}$, a polynomial in the real variable $t$. This application therefore uses only the ordinary fundamental theorem of calculus.

\begin{corollary}\label{lem:positive-mediant}
Suppose $a/b<c/d$ are consecutive in $F_{n-1}\cap[0,1/2]$ and $b+d=n\ge4$. Put
\[
 \xi=\frac{a+c}{b+d}=\frac{a+c}{n},
\]
the fraction formed by adding the endpoint numerators and denominators separately. Then
\[
 K_n(2\pi x)>K_{n-1}(2\pi x)
 \qquad\text{for every }x\in(a/b,\xi)\cup(\xi,c/d).
\]
\end{corollary}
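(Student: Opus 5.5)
We would first normalize the orientation. Reflection $y\mapsto1-y$ (complex conjugation) preserves $K_n$ and $K_{n-1}$ and swaps the roles of the endpoints, so we may assume $b<d$. Then $b<n/2<d$, since $b+d=n$.

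At order $n-1$ the interval $(a/b,c/d)$ carries the data $(p,q,r,s)=(a,b,c,d)$ with $m_0=\lfloor (n-1)/b\rfloor$. At order $n$ it splits into two intervals. The left one is $(a/b,\xi)$: its left endpoint has the smaller denominator, so it carries $(a,b,a+c,n)$ with $m_1=\lfloor n/b\rfloor\ge m_0$. The right one is $(\xi,c/d)$: its right endpoint has the smaller denominator $d$, so after reflection it carries $(1-c/d,\,1-\xi)$ with $q=d$, $s=n$, and $m=\lfloor n/d\rfloor=1$. The Farey identities $(a+c)b-an=1$ and $c n-(a+c)d=1$ follow from $cb-ad=1$.

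On the left interval we first use Lemma~\ref{lem:add-factor} repeatedly to raise the factor count from $m_0$ to $m_1$ at the old endpoints $(a/b,c/d)$. This can only increase the root, and its hypothesis $A+B<\pi$ holds as in \eqref{eq:angle-budget}. It then remains to compare, at a common factor count $m$, the equation for $(a/b,c/d)$ with the equation for $(a/b,\xi)$. Both share $A=2\pi(by-a)$. For the old and new equations respectively,
\[
 B=\frac{2\pi(c-dy)}{m},\qquad B'=\frac{2\pi(a+c-ny)}{m}=B-A\,\frac{b}{m}\cdot\frac{1}{b}\,(\text{up to the factor }1/m).
\]
The right interval is treated the same way with $m=1$, the roles of $b$ and $d$ being exchanged by reflection. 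In each case the goal is the same. Let $\rho_0$ be the old root. Evaluate the new function $\rho^{n/m}\sin A+\rho^{b}\sin B'-\sin(A+B')$ at $\rho_0$ and show that it is negative. Since the left-hand side increases strictly in $\rho$, the new root then exceeds $\rho_0$.

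For the sign, we would convert both scalar equations into imaginary parts. Each expression $\rho^{S}\sin A+\rho^{Q}\sin B-\sin(A+B)$ equals $\operatorname{Im}$ of an expression of the form $e^{i\chi}(X-Y)$ with $X,Y$ monomials in $\rho$ and unit phases. Substituting the old equation, solved as $\sin(A+B)=\rho_0^{d/m}\sin A+\rho_0^b\sin B$, into the new one should factor the difference as a positive multiple of
\[
 \operatorname{Im}\bigl(e^{i\chi}(\bar a-\bar b)(a^m-b^m)\bigr),
\]
with $a=\rho_0^{d/m}e^{i\cdot(\cdot)}$ and $b=\rho_0^{b}e^{i\cdot(\cdot)}$ up to a common rotation. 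The exponent gap $n/m-d/m=b/m$ is exactly the extra $z^m$ structure: raising to the $m$th power turns the relation between $\rho^{n/m}$ and $\rho^{d/m}\rho^{b}$ into $a^m$ versus $b^m$. We then apply Lemma~\ref{thm:sector-difference} with $f(z)=z^m$. Its hypothesis is that $e^{i\chi}m z^{m-1}$ has positive imaginary part along $[a,b]$. The arguments of $z^{m-1}$ on the segment lie between those at the endpoints, and these are controlled by $A$, $B$, $B'$. The bounds $A+B<\pi$ together with the positivity of $A$, $B$ and $B'$ should place $\chi+(m-1)\arg z$ inside $(0,\pi)$.

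The main obstacle is the middle step: finding the exact choice of $a$, $b$ and $\chi$ that makes the difference of the two scalar equations a positive multiple of the displayed imaginary part, with the correct sign. We would first work out the case $m=1$, which is the right interval and also the case $f(z)=z$. There the identity reduces to a trigonometric determinant, and Lemma~\ref{thm:sector-difference} is just $\operatorname{Im}(e^{i\chi})>0$. We would then guess the general pairing by replacing each phase with its $m$th root. The remaining bookkeeping is verifying the sector condition uniformly in $y$ over each open subinterval, where the arguments of the endpoints of $[a,b]$ stay in a half-turn because $A+B<\pi$ and $0<B'<B$.
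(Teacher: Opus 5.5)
Your outline has the right architecture: reflect so that $b<d$, evaluate the new scalar function at the old root, rewrite the defect as $\operatorname{Im}\bigl(e^{i\chi}(\bar z-\bar w)(z^m-w^m)\bigr)$, and finish with Lemma~\ref{thm:sector-difference} for $f(z)=z^m$. Raising the factor count with Lemma~\ref{lem:add-factor} at the old endpoints before comparing is also harmless. The paper does this afterwards, on the new left interval, and either way it is needed only when $b=1$, since $b\mid n$ forces $b\mid d$. However, the step you label the main obstacle is the whole content of the corollary, and your guesses for it point the wrong way. Because $\rho_0^{n/m}=\rho_0^{d/m}\rho_0^{b/m}$ (not $\rho_0^{d/m}\rho_0^{b}$), the natural variables are $u=\rho_0^{b/m}$ and $v=\rho_0^{d/m}$. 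In these variables the old equation reads $v\sin A+u^m\sin B=\sin(A+B)$, and one endpoint of the segment must lie on the unit circle. On $(a/b,\xi)$ the new angles are $A$ and $B-\eta$, with $\eta=A/m$. Let $D_L$ be the new right-hand side minus the new left-hand side at $\rho_0$. Eliminating $\rho_0^{n/m}=uv$ by means of $u$ times the old equation gives
\[
 D_L=\operatorname{Im}\bigl(e^{iB}(e^{-i\eta}-u)(e^{im\eta}-u^m)\bigr).
\]
So the segment is $[u,e^{i\eta}]$, and the rotated derivative has arguments in $[B,A+B-\eta]\subset(0,\pi)$.

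The more serious problem is your claim that the right interval is the case $f(z)=z$. There the new equation has one factor with $q=d$, attached to the opposite endpoint. The old equation still has $m=\lfloor(n-1)/b\rfloor$ factors with $q=b$. No adjustment of factor counts reduces this to $m=1$: lowering the old count lowers the old root, which is the wrong direction. The new angles are $mB$ and $A-mB$. The defect $\sin A-\rho_0^n\sin(mB)-\rho_0^d\sin(A-mB)$ is a polynomial of degree $m+1$ in $v$; for $b=1$ it has degree $n$ in $\rho_0$. Here you must eliminate $u^m=\rho_0^b$ rather than $v$, and multiply by $\sin B/\sin A$. Then use $\sin(A+B)\sin(mB)+\sin B\sin(A-mB)=\sin A\sin((m+1)B)$ to reach $\operatorname{Im}\bigl(e^{iB}(1-ve^{-iB})(1-v^me^{imB})\bigr)$. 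This is Lemma~\ref{thm:sector-difference} for $f(z)=z^m$ on $[ve^{iB},1]$, with the \emph{old} $m$. Its sector condition is $mB<\pi$, which does not follow from $A+B<\pi$. It comes from $mB<A$, which is precisely the condition $x>\xi$. Without these two eliminations and the correct sector bounds, the sign of the defect is not established, and neither is the inequality $K_n>K_{n-1}$.
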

\begin{proof}
Fix $x$ in either new interval and let $\rho=K_{n-1}(2\pi x)$. If $b>d$, use the reflected data $1-x$ and $(d-c)/d<(b-a)/b$. This is the conjugate choice in the definition of the scalar root and leaves the radius unchanged. We may therefore assume $b<d$, using $a,b,c,d,x$ for the reflected values if necessary. Put
\[
 \xi=\frac{a+c}{n},\qquad
 m=\lfloor(n-1)/b\rfloor,\qquad
 A=2\pi(bx-a),\qquad B=2\pi(c-dx)/m.
\]
The old root $\rho\in(0,1)$ satisfies
\begin{equation}\label{eq:old-mediant-scalar}
 \rho^{d/m}\sin A+\rho^b\sin B=\sin(A+B),
 \qquad A,B>0,\quad A+B<\pi.
\end{equation}
Since $n=b+d$,
\[
 mB-A=2\pi\bigl((a+c)-nx\bigr).
\]
Thus $x<\xi$ is equivalent to $A<mB$, and $x>\xi$ to $A>mB$.

In the definition of $K_n$, the number of product factors is $\lfloor n/q\rfloor$, where $q$ is the smaller endpoint denominator. This is the largest integer $k$ with $kq\le n$.
For $a/b<x<\xi$, the endpoint denominators are $b,n$, so the definition requires $\lfloor n/b\rfloor$ factors. For the comparison, initially keep the old number $m$; we will adjust it to $\lfloor n/b\rfloor$ at the end. The two angles in its scalar equation are $A$ and $B-A/m$.
For $\xi<x<c/d$, conjugation places the denominator $d$ at the lower endpoint. The endpoint denominators are then $d,n$, and the factor count is $\lfloor n/d\rfloor=1$ because $n=b+d<2d$. Its angles are $mB$ and $A-mB$.
In their respective intervals, both angle pairs are positive and their sums satisfy
\[
 A+B-A/m<\pi,\qquad mB+(A-mB)=A<\pi.
\]
Both equations therefore have unique roots in $(0,1)$.

Let $D_L$ and $D_R$ be the right-hand side minus the left-hand side of these equations, evaluated at the old radius $\rho$:
\begin{align*}
 D_L&=\sin(A+B-A/m)-\rho^{n/m}\sin A
                            -\rho^b\sin(B-A/m),\\
 D_R&=\sin A-\rho^n\sin(mB)-\rho^d\sin(A-mB).
\end{align*}
Each left-hand side increases strictly with the radius. Hence $D_L>0$ on the first interval and $D_R>0$ on the second will place the corresponding new roots above $\rho$.

To determine these signs, put
\[
 \eta=A/m,\qquad u=\rho^{b/m},\qquad v=\rho^{d/m},
\]
and, for $z,w\in\mathbb C$, define
\[
 G_m(z,w)=(\bar z-\bar w)(z^m-w^m).
\]
The old equation becomes
\[
 v\sin A+u^m\sin B=\sin(A+B).
\]
In $D_L$, the mixed power is $\rho^{n/m}=uv$. Multiplying the old equation by $u$ gives
\[
 uv\sin A=u\sin(A+B)-u^{m+1}\sin B.
\]
Substitution eliminates $v$ and yields
\[
 D_L=\sin\bigl(B+(m-1)\eta\bigr)-u\sin(B+m\eta)
       -u^m\sin(B-\eta)+u^{m+1}\sin B.
\]
This is the imaginary part of
$e^{iB}(e^{-i\eta}-u)(e^{im\eta}-u^m)$.

For $D_R$, use $\rho^n=u^mv^m$ and $\rho^d=v^m$. Eliminate $u^m$ through
\[
 u^m\sin B=\sin(A+B)-v\sin A.
\]
Multiplying $D_R$ by the positive number $\sin B/\sin A$ and using
\[
 \sin(A+B)\sin(mB)+\sin B\sin(A-mB)
 =\sin A\sin\bigl((m+1)B\bigr)
\]
gives
\[
 \frac{\sin B}{\sin A}D_R
 =\sin B-v^m\sin\bigl((m+1)B\bigr)+v^{m+1}\sin(mB).
\]
This is the imaginary part of
$e^{iB}(1-ve^{-iB})(1-v^me^{imB})$; the additional term $-v$ in this product is real. We have therefore obtained
\begin{equation}\label{eq:mediant-differences}
 \begin{split}
 D_L&=\operatorname{Im}\!\left(e^{iB}G_m(e^{i\eta},u)\right),\\
 D_R&=\frac{\sin A}{\sin B}\,
       \operatorname{Im}\!\left(e^{iB}G_m(1,ve^{iB})\right).
 \end{split}
\end{equation}

These are the expressions in Lemma~\ref{thm:sector-difference} for $f(z)=z^m$ and rotation angle $\chi=B$. Their segments join $u$ to $e^{i\eta}$ and $ve^{iB}$ to $1$, respectively. Points on the first segment have arguments in $[0,\eta]$, and points on the second in $[0,B]$. Neither segment passes through zero, since each pair of endpoint directions differs by less than $\pi$.

For $x<\xi$, the rotated derivative $e^{iB}mz^{m-1}$ on the first segment has arguments in
\[
 [B,B+(m-1)\eta]\subset(0,\pi),
 \qquad B+(m-1)\eta=A+B-A/m<A+B<\pi.
\]
For $x>\xi$, its arguments on the second segment lie in
\[
 [B,mB]\subset(0,\pi),
 \qquad mB<A<\pi.
\]
Thus the derivative has positive imaginary part after rotation on each segment in the case where it is used. The lemma gives $D_L>0$ on the first interval and $D_R>0$ on the second; the factor $\sin A/\sin B$ is positive. The argument also covers $m=1$, when the derivative is constant. The corresponding new roots are therefore larger than $\rho$.

On the second interval, we have used exactly the $\lfloor n/d\rfloor=1$ factor required by the definition of $K_n$. On the first interval, that definition requires $\lfloor n/b\rfloor$ factors, while our comparison used $m=\lfloor(n-1)/b\rfloor$. These differ precisely when $b\mid n$, equivalently $b\mid d$. The endpoint denominators are coprime, so this occurs only when $b=1$. If $b>1$, the counts agree and the comparison is proved. If $b=1$, then $m=d$ and the required count is $d+1$. Apply Lemma~\ref{lem:add-factor} to the first interval, whose angle sum $A+B-A/m$ is below $\pi$. Increasing its count from $m$ to $m+1$ increases its root further, so the comparison holds in this case as well.
\end{proof}

\begin{theorem}[Monotonicity in the order]\label{thm:order-monotonicity}
Let $n\ge5$, $0\le\theta\le\pi$, and $x=\theta/(2\pi)$. Then
\[
 K_{n-1}(\theta)\le K_n(\theta).
\]
If $x\in F_{n-1}$, both values equal $1$. Otherwise, let $a/b<c/d$ be the consecutive fractions of $F_{n-1}$ with $a/b<x<c/d$, and put $q=\min\{b,d\}$. Equality holds precisely when these endpoints remain consecutive in $F_n$ and their numbers of product factors agree:
\[
 \lfloor(n-1)/q\rfloor=\lfloor n/q\rfloor.
\]
\end{theorem}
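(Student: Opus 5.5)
The plan is a case split on how the Farey data at $x$ changes from order $n-1$ to order $n$. The passage from $F_{n-1}$ to $F_n$ inserts exactly the fractions with denominator $n$, and each of them is the mediant of the two neighbours it separates.

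First suppose $x\in F_{n-1}$. Definition~\ref{def:candidate} gives $K_{n-1}(\theta)=K_n(\theta)=1$, because $n-1\ge4$ and $F_{n-1}\subseteq F_n$.

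Otherwise let $a/b<c/d$ be the consecutive fractions of $F_{n-1}$ around $x$; they lie in $[0,1/2]$. By the Farey criterion proved before Theorem~\ref{thm:product}, we have $bc-ad=1$ and $b+d>n-1$. There are then two cases.

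\emph{Case $b+d\ge n+1$.} The pair remains consecutive in $F_n$. The smaller denominator $q$, the reflection choice, and the angles $A$ and $B\cdot m$ are unchanged; only the factor count $m$ can move, from $\lfloor (n-1)/q\rfloor$ to $\lfloor n/q\rfloor$. If the counts agree, the two scalar equations \eqref{eq:radial} coincide, so $K_{n-1}=K_n$. If they differ, then $\lfloor n/q\rfloor=\lfloor(n-1)/q\rfloor+1$. Applying Lemma~\ref{lem:add-factor} with $m=\lfloor(n-1)/q\rfloor$ gives strict inequality. The hypothesis $A+B_m<\pi$ of that lemma holds by the computation after \eqref{eq:angle-budget}, since $n-1\ge3$.

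\emph{Case $b+d=n$.} The mediant $\xi=(a+c)/n$ is reduced, because $bc-ad=1$, and it lies in $[0,1/2]$ as the mediant of two fractions there. So $\xi$ is inserted into $F_n$. If $x=\xi$, then $K_n(\theta)=1>K_{n-1}(\theta)$, the latter being strictly less than $1$ by Definition~\ref{def:candidate}. If $x\ne\xi$, Corollary~\ref{lem:positive-mediant} gives $K_n(2\pi x)>K_{n-1}(2\pi x)$ directly; its hypothesis $b+d=n\ge4$ is satisfied.

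These cases are exhaustive, and they also give the equality characterization. Equality holds exactly in the first case with equal factor counts. In the split case, and in the first case with an increased count, the inequality is strict. "Remain consecutive in $F_n$" is equivalent to $b+d\ne n$, again by the Farey criterion, and in that case $q=\min\{b,d\}$ is unchanged.

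The main obstacle is checking that all hypotheses line up with Definition~\ref{def:candidate}. Two points need care.
\begin{itemize}
\item When $b>d$, the reflected data $(1-c/d,1-a/b)$ are used at both orders, so the lemma must be applied to $1-x$. This is consistent because in the first case the endpoints, and hence the reflection choice, are unchanged.
\item The case $x=\theta/(2\pi)=1/2$ must be included. At order $n-1\ge4$ the value $1/2$ lies in $F_{n-1}$, so it falls under the trivial case and no unspecified value at $\theta=\pi$ arises. This is where $n\ge5$ is used.
\end{itemize}
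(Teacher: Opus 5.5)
Your proof is correct and follows essentially the same route as the paper. The paper uses the same three cases: $x\in F_{n-1}$ is trivial, an unchanged interval is handled by equal scalar equations or by Lemma~\ref{lem:add-factor}, and a split interval is handled by Corollary~\ref{lem:positive-mediant} together with $K_n=1$ at the inserted mediant. The only difference is cosmetic: you decide whether the interval survives via the Farey adjacency criterion ($b+d>n$), whereas the paper uses a direct lattice argument showing that any newly inserted fraction must be the mediant $(a+c)/(b+d)$ with $b+d=n$.
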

\begin{proof}
Put $x=\theta/(2\pi)$. Every fraction newly added to $F_n$ has reduced denominator $n$. Let $a/b<c/d$ be consecutive in $F_{n-1}$. The Farey criterion proved earlier gives $bc-ad=1$ and $b+d>n-1$, hence $b+d\ge n$.
If a new fraction $h/n$ lies between them, the inequalities $a/b<h/n<c/d$ make $bh-an$ and $cn-dh$ positive integers. Since $bc-ad=1$, we have
\[
 (n,h)=(cn-dh)(b,a)+(bh-an)(d,c).
\]
Comparing first coordinates gives
\[
 n=(cn-dh)b+(bh-an)d\ge b+d\ge n.
\]
Thus $b+d=n$, both coefficients equal $1$, and the second coordinate gives $h=a+c$. Each old interval therefore either remains unchanged or is split by the sole inserted fraction $(a+c)/(b+d)$, formed by adding its endpoint numerators and denominators separately.

Suppose $x$ lies in an interval whose endpoints remain consecutive in $F_n$, and put $q=\min\{b,d\}$. The numbers of product factors used to define $K_{n-1}$ and $K_n$ are $\lfloor(n-1)/q\rfloor$ and $\lfloor n/q\rfloor$. They are equal or the second exceeds the first by one. If they agree, the scalar equations are identical and their roots agree. If the number increases, Lemma~\ref{lem:add-factor} gives a strictly larger root.

If the interval containing $x$ is split and $x$ is not the inserted fraction, Corollary~\ref{lem:positive-mediant} gives $K_{n-1}(\theta)<K_n(\theta)$ on either of the two new open intervals.

Finally, if $x\in F_{n-1}$, both values are $1$ by definition. If $x\in F_n\setminus F_{n-1}$, then $K_n(\theta)=1$, whereas $K_{n-1}(\theta)<1$ because $x$ lies inside an old interval.
\end{proof}

\subsection{Small orders and the boundary}
\begin{proposition}\label{prop:low-orders}
Let $\xi=e^{2\pi i/3}$. Then $\Theta_1=\{1\}$, $\Theta_2=[-1,1]$, and
\begin{equation}\label{eq:order-three}
 \Theta_3=\operatorname{conv}\{1,\xi,\bar\xi\}\ \cup\ [-1,-1/2].
\end{equation}
For $0\le\theta\le\pi$, define its radial maximum by
\[
 R_3(\theta)=\max\{r\ge0:re^{i\theta}\in\Theta_3\}.
\]
For $0<\theta<\pi$, this is the scalar root in \eqref{eq:radial} for $n=3$, taking the value $1$ at $\theta=2\pi/3$. Moreover,
\[
 R_3(\theta)\le K_4(\theta)\qquad(0\le\theta\le\pi).
\]
\end{proposition}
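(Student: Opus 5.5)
We first settle the small orders directly from the polygon criterion \eqref{eq:polygon-criterion}. For $n=1$ the only stochastic matrix is $(1)$. For $n=2$, any invariant polytope with at most two vertices is a segment, so $\lambda$ must be real; the matrices with rows $(1-a,a)$ and $(b,1-b)$ have eigenvalue $1-a-b$, which takes every value in $[-1,1]$. For $n=3$ a nonreal $\lambda$ needs an invariant triangle. By an affine change we may take the triangle with vertices $1,\xi,\bar\xi$, since a real linear map preserving a triangle with the origin inside corresponds to multiplication by $\lambda$ after a real change of coordinates. Using the stochastic-matrix description is more concrete, though. The characteristic polynomial of a $3\times3$ stochastic matrix is $(t-1)(t^2-\tau t+\delta)$, and we determine which pairs $(\tau,\delta)$ occur. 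A nonreal pair $\lambda,\bar\lambda$ gives $\tau=2\Re\lambda$ and $\delta=|\lambda|^2$. The condition that $\lambda$ lies in $\conv\{1,\xi,\bar\xi\}$ reads $\Re\lambda\ge-1/2$ and $|\Im\lambda|\le(1-\Re\lambda)/\sqrt3$.

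\textbf{Inclusion of the triangle.} The triangle is contained in $\Theta_3$: the cyclic permutation matrix gives $\xi$, star-shapedness (proved in Section~2) gives the segment from $0$ to $\xi$, and convex combinations $tI+(1-t)C$ of the identity and the cyclic matrix give the eigenvalues $t+(1-t)\xi$ on the side joining $1$ and $\xi$. By star-shapedness again, every point of the triangle is covered. Real eigenvalues in $[-1,-1/2]$ come from padding order-two matrices, and $[-1/2,1]$ lies in the triangle.

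\textbf{Converse.} A nonreal eigenvalue of order $3$ lies in the triangle. The trace gives $1+2\Re\lambda=\operatorname{tr}A\ge0$, so $\Re\lambda\ge-1/2$. For the slanted sides we use the invariant-polygon picture. A triangle $P$ with $\lambda P\subseteq P$ and $0$ interior, with $\arg\lambda\in(0,\pi)$, must satisfy the contact assignment of Section~2 with $N=3$ and $\kappa\in\{1,2\}$, giving $\arg\lambda\le 2\pi/3$ for $\kappa=1$ as in \eqref{eq:lift}. The bound on the modulus then follows from Theorem~\ref{thm:product} and Theorem~\ref{thm:convex-product}. These apply with $N=3$ once we check that the proofs of the product reduction for $\delta=1$ do not use $N\ge4$ except through $s\ge3$. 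Here the Farey intervals are $(0,1/3)$ and $(1/3,1/2)$, with $m=3$ and $m=1$.

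\textbf{Identifying $R_3$ with the scalar root.} Alternatively, and more safely, we verify directly that the scalar root \eqref{eq:radial} for $n=3$ traces the two triangle sides. On $(0,1/3)$ we have $q=1$, $s=3$, $m=3$, and the Ito curve $z^3(z-\beta)^3=(1-\beta)^3z^3$ gives $z=\beta+(1-\beta)\cdot$ a cube root of unity, which is the side from $1$ to $\xi$. On $(1/3,1/2)$ we have $q=2$, $s=3$, $m=1$, and $z^3(z^2-\beta)=(1-\beta)z^2$ factors as $z\cdot(z^2\cdots)$; checking that the root has $\Re z=-1/2$ requires expanding $z^3-\beta z-(1-\beta)=(z-1)(z^2+z+1-\beta)$, whose roots $-1/2\pm i\sqrt{3/4-\beta}$ give the vertical segment. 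Uniqueness of the scalar root then identifies $R_3$ with these sides, and equals $1$ at $2\pi/3$. This factorization check is the main computational step.

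\textbf{Comparison with $K_4$.} Finally, $R_3\le K_4$. On $(0,1/4)$ and $(1/4,1/3)$ this comes from Corollary~\ref{lem:positive-mediant} with $a/b=0/1$, $c/d=1/3$, $n=4$. Although that corollary is stated for consecutive fractions in $F_{n-1}$ with $n\ge4$, its proof only uses the scalar equations, which hold for $n-1=3$. At $\theta=\pi/2$ we have $K_4=1$. On $(1/3,1/2)$ the interval is not split, and the factor count goes from $\lfloor3/2\rfloor=1$ to $\lfloor4/2\rfloor=2$, so Lemma~\ref{lem:add-factor} applies. At $\theta=\pi$ we have $K_4(\pi)=1=R_3(\pi)$, and at $2\pi/3$ both values are $1$. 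The main obstacle is justifying the converse inclusion cleanly for $n=3$, where Section~2 assumed $N\ge4$. The fallback is a direct argument: the boundary of $\Theta_3$ at argument $\theta$ is attained by some matrix, and an eigenvalue outside the triangle would force an invariant triangle whose image triangle violates the side contacts.
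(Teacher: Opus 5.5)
You have a genuine gap in the converse inclusion, namely the slanted-side bound $\sqrt3\,|\operatorname{Im}\lambda|\le1-\operatorname{Re}\lambda$ for nonreal $\lambda\in\Theta_3$. Your trace argument gives $\operatorname{Re}\lambda\ge-1/2$, as in the paper. For the slanted sides, however, you only defer to Theorems~\ref{thm:product} and~\ref{thm:convex-product} at $N=3$, and the check you name is not the relevant one.

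In the case $\delta=1$ with $\varphi>1$, the product reduction obtains $\Delta=1$ from Theorem~\ref{thm:no-skipping}. That proof needs a polygon side outside the chain, and hence $N\ge4$. For triangles its conclusion is actually false. Multiplication by $-\tfrac12+iy$ with $0<y<\sqrt3/2$ sends each vertex of $\conv\{1,\xi,\bar\xi\}$ strictly inside the opposite side, which gives $\kappa=2$, $\varphi=3$ and $\Delta=2$.

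Your route survives only because the slanted sides have $\kappa=1$. There the record deficits are $3,2,1$, so $\Delta=1$ holds for arithmetic reasons. The case $\delta=1$ then gives $q=1$, $m=3$, $e=0$, and Jensen yields $\rho\le K_3(\theta)$ for $0<\theta<2\pi/3$. None of this appears in your proposal.

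The paragraph you introduce with ``alternatively, and more safely'' cannot replace it. Showing that the scalar roots trace the sides of the triangle identifies $R_3$ only once $\Theta_3\setminus\R\subseteq\conv\{1,\xi,\bar\xi\}$ is already known. It gives no upper bound on eigenvalues. Your closing fallback sentence is not an argument.

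The missing idea is elementary. Write $\lambda=u+iv$, so the spectrum is $1,\lambda,\bar\lambda$ and $\operatorname{tr}(A^2)=1+2u^2-2v^2$. Nonnegativity of the entries and Cauchy--Schwarz give $\operatorname{tr}(A^2)\ge\sum_ia_{ii}^2\ge(\operatorname{tr}A)^2/3=(1+2u)^2/3$. Rearranging gives $3v^2\le(1-u)^2$.

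There is also a smaller hole in your inclusion step. Segments from $0$ to $[1,\xi]$ and $[1,\bar\xi]$ cover only $\conv\{0,1,\xi\}\cup\conv\{0,1,\bar\xi\}$. They miss nonreal points with argument in $(2\pi/3,\pi)$, such as $-0.4+0.3i$. Use $tC+(1-t)C^2$ for the vertical side, or $aI+bC+cC^2$ for the whole triangle as the paper does.

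Your two Ito factorizations and the comparison with $K_4$ agree with the paper. Corollary~\ref{lem:positive-mediant} applies as stated, since there $n=4$ and $b+d=4$, so it needs no extension.
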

\begin{proof}
The only stochastic matrix of order one is $[1]$. A stochastic matrix of order two has the form
\[
 \begin{pmatrix}a&1-a\\ b&1-b\end{pmatrix},\qquad a,b\in[0,1],
\]
and its eigenvalues are $1$ and $a-b$. This gives $\Theta_1=\{1\}$ and $\Theta_2=[-1,1]$.

Let $\lambda=u+iv$, $v\ne0$, be an eigenvalue of a stochastic matrix $A=(a_{ij})$ of order three. Its spectrum is $1,\lambda,\bar\lambda$, so
\[
 1+2u=\operatorname{tr}A\ge0.
\]
Also,
\[
 \operatorname{tr}(A^2)=\sum_i a_{ii}^2+\sum_{i\ne j}a_{ij}a_{ji}
 \ge\sum_i a_{ii}^2.
\]
Cauchy--Schwarz therefore gives
\[
 (1+2u)^2\le3\sum_i a_{ii}^2\le3\operatorname{tr}(A^2)
 =3(1+2u^2-2v^2).
\]
Thus $u\ge-1/2$ and $3v^2\le(1-u)^2$. Since $|\lambda|\le1$, we also have $u\le1$. These are exactly the inequalities defining the triangle with vertices $1,\xi,\bar\xi$.

Conversely, let $a,b,c\ge0$ with $a+b+c=1$. If $C_3$ is a three-cycle permutation matrix, then $aI+bC_3+cC_3^2$ is row-stochastic. Since $C_3$ has eigenvalue $\xi$, this matrix has eigenvalue $a+b\xi+c\bar\xi$, realizing every point of the triangle. Real stochastic eigenvalues lie in $[-1,1]$, and every point of this interval is realized at order three by adjoining a $1\times1$ identity block to an order-two matrix. Since the triangle already contains $[-1/2,1]$, this proves the stated description of $\Theta_3$.

For angles inside the two Farey intervals, put $x=\theta/(2\pi)$ and let $\rho$ be the scalar root for $n=3$. Use $\omega=\rho e^{2\pi ix}$ for $0<x<1/3$, and $\omega=\rho e^{2\pi i(1-x)}$ for $1/3<x<1/2$. Choose $\alpha,\beta\in(0,1)$ by \eqref{eq:equal-parameters}, so $\alpha+\beta=1$. Since $\omega\ne0$, we can cancel powers of $\omega$ in \eqref{eq:ito-attaining}.
For $0<x<1/3$, the parameters are $q=1$ and $s=m=3$, and the equation reduces to
\[
 (\omega-\beta)^3=\alpha^3.
\]
Its root in the upper half-plane is $\omega=\beta+\alpha\xi$, on the side from $1$ to $\xi$.
For $1/3<x<1/2$, conjugation replaces $x$ by $1-x\in(1/2,2/3)$, giving $q=2$, $s=3$, and $m=1$. The reduced equation is
\[
 \omega^3-\beta\omega-\alpha
 =(\omega-1)(\omega^2+\omega+\alpha)=0.
\]
Its nonreal roots have real part $-1/2$. Restoring the upper-half-plane point gives
\[
 z=-\frac12+\frac{i}{2}\sqrt{4\alpha-1},
\]
which traces the vertical side from $-1/2$ to $\xi$ as $\alpha$ increases from $1/4$ to $1$.
Equation~\eqref{eq:factor-resolution} and uniqueness of the scalar root identify its modulus with $R_3(\theta)$: in either angle range, the indicated side is precisely where the ray leaves the triangle. At $\theta=2\pi/3$, the maximum is $1$. In particular,
\[
 \lim_{\theta\uparrow\pi}R_3(\theta)=\frac12,
 \qquad R_3(\pi)=1,
\]
because the extra real interval contains $-1$.

To compare with $K_4$, insert $1/4$ between $0$ and $1/3$. Corollary~\ref{lem:positive-mediant} gives the comparison on both resulting open intervals. On $(1/3,1/2)$, the endpoints stay fixed and the smaller denominator is $2$, so the number of factors increases from $\lfloor3/2\rfloor=1$ to $\lfloor4/2\rfloor=2$. Lemma~\ref{lem:add-factor} gives the comparison there; its angle condition follows from \eqref{eq:angle-budget}. At $x=1/4$, $K_4=1$ and $R_3<1$ by the triangle description. At $x=0,1/3,1/2$, both maxima equal $1$.
\end{proof}

The points of $\Theta_n$ on the unit circle are exactly the roots of unity of orders at most $n$. A permutation matrix that cyclically permutes $\ell$ coordinates has all $\ell$th roots of unity as eigenvalues. For $\ell\le n$, adjoining an identity block if needed realizes these eigenvalues at order $n$.

Conversely, let $A=(a_{ij})$ be a real row-stochastic matrix of order $n$ and let $Av=\lambda v$, with $v\ne0$ and $|\lambda|=1$. Put
\[
 M=\max_j|v_j|>0,\qquad S=\{j:|v_j|=M\}.
\]
For every $i\in S$,
\[
 M=|\lambda v_i|=\left|\sum_j a_{ij}v_j\right|
 \le\sum_j a_{ij}|v_j|\le M\sum_j a_{ij}=M.
\]
Both inequalities are therefore equalities. If $a_{ij}>0$, the second forces $|v_j|=M$, and equality in the triangle inequality then gives $v_j=\lambda v_i$.
Draw an arrow $i\to j$ whenever $a_{ij}>0$. Each index in $S$ has an outgoing arrow, and every such arrow stays in $S$. Following arrows in this finite set eventually repeats an index, producing a directed cycle of length $\ell\le|S|\le n$. Along that cycle, $v_i=\lambda^\ell v_i$. Since $v_i\ne0$, we obtain $\lambda^\ell=1$.

\begin{proof}[Completion of Theorem~\ref{thm:karpelevic}]
Fix $n\ge4$ and $0\le\theta\le\pi$, and put $x=\theta/(2\pi)$. Write
\[
 R_n(\theta)=\max\{r\ge0:re^{i\theta}\in\Theta_n\}.
\]
If $x\in F_n$, the unit-circle classification gives $R_n(\theta)=1=K_n(\theta)$. Now suppose $x\notin F_n$. The stochastic realization in Theorem~\ref{thm:sparse-realization} gives $K_n(\theta)e^{i\theta}\in\Theta_n$, so
\[
 K_n(\theta)\le R_n(\theta).
\]

Put $z=R_n(\theta)e^{i\theta}$, and let $k\le n$ be the smallest order of a stochastic matrix having $z$ as an eigenvalue. Since $K_n(\theta)>0$, we have $z\ne0$. Also, $0<\theta<\pi$ and the unit-circle classification gives $|z|<1$. Thus $z$ is nonreal and $k\ge3$. Because $\Theta_k\subseteq\Theta_n$, $z$ is also maximal at this angle in $\Theta_k$.

If $k=3$, Proposition~\ref{prop:low-orders} and Theorem~\ref{thm:order-monotonicity} give
\[
 |z|\le R_3(\theta)\le K_4(\theta)\le K_n(\theta)\le|z|.
\]
Hence $R_n(\theta)=K_n(\theta)$ in this case.

Suppose $k\ge4$. By the invariant-polygon criterion \eqref{eq:polygon-criterion}, $k$ is the smallest number of vertices of a polygon $P$ with $zP\subseteq P$. Moreover, for any real $t>1$, a polygon with at most $k$ vertices invariant under multiplication by $tz$ would give $tz\in\Theta_k\subseteq\Theta_n$, contradicting the definition of $R_n(\theta)$. These are precisely the two extremality conditions in \eqref{eq:extremal}, with $N=k$ and $\zeta=z$.
The product reduction, Theorem~\ref{thm:product}, therefore applies. The bound on the modulus in Theorem~\ref{thm:convex-product} gives $|z|\le K_k(\theta)$. Conversely, stochastic realization at order $k$ gives $K_k(\theta)\le|z|$, since $z$ is maximal in $\Theta_k$. Thus $|z|=K_k(\theta)$, and monotonicity gives
\[
 |z|=K_k(\theta)\le K_n(\theta)\le|z|.
\]
We have proved $R_n(\theta)=K_n(\theta)$ for every $0\le\theta\le\pi$.

Extend $K_n$ to $[-\pi,\pi]$ by $K_n(-\theta)=K_n(\theta)$. Conjugation symmetry gives the radial maxima in the lower half-plane as well. Since $\Theta_n$ contains the whole segment from zero to each of its points,
\[
 \Theta_n=\{re^{i\theta}:-\pi\le\theta\le\pi,\ 0\le r\le K_n(\theta)\}.
\]
The positivity and continuity of $K_n$ show that its boundary is exactly
\[
 \partial\Theta_n=\{K_n(\theta)e^{i\theta}:-\pi\le\theta\le\pi\}.
\]
Equations~\eqref{eq:ito-attaining} and \eqref{eq:factor-resolution} give the stated Ito equation on each arc. The coefficient takes every value in $[0,1]$ continuously along each arc, as proved above, and the endpoint limits join the corresponding roots of unity. Proposition~\ref{prop:low-orders} gives the remaining orders, including the additional real segment $[-1,-1/2]$ for $n=3$.
\end{proof}


\section{Measuring contraction with polygons}
\label{sec:gauges}

Invariant polyhedra and functions that measure vector size by scaling them are standard tools in stability and control \cite{Bitsoris,FarinaBenvenuti,Blanchini}. Hennet and Lasserre describe when a rotation followed by a dilation leaves a centered regular polygon invariant \cite[Lemma~2]{HennetLasserre}. Regular polygons also appear in constructions of polyhedral Lyapunov functions for planar systems with nonreal eigenvalues \cite{RossiColaneriShorten}. Kousoulidis and Forni study such constructions with a fixed number of vertices or defining linear inequalities \cite{KousoulidisForni}. Here the boundary theorem gives the best possible contraction factor when vector size is measured by scaling a polygon with a prescribed maximum number of vertices. The polygon need not be regular or symmetric.

Let $R_\theta$ denote planar rotation through the real angle $\theta$, and put $T_{\rho,\theta}=\rho R_\theta$, where $\rho>0$. Fix $N\ge4$. We allow any convex polygon $P$ with at most $N$ vertices and $0\in\operatorname{int}P$.

Measure the size of $x\in\mathbb R^2$ by
\[
 V_P(x)=\inf\{a>0:x\in aP\}.
\]
This measures how much we must scale $P$ to contain $x$. We impose no symmetry on $P$; in particular, $V_P(-x)$ may differ from $V_P(x)$, so $V_P$ need not be a norm.
For $\gamma>0$, the inclusion $T_{\rho,\theta}P\subseteq\gamma P$ is equivalent to
\[
 V_P(T_{\rho,\theta}x)\le\gamma V_P(x)
 \qquad(x\in\mathbb R^2).
\]
A factor $\gamma<1$ therefore means contraction in this measurement. Define the best possible factor over all the allowed polygons by
\[
 \Gamma_N(T_{\rho,\theta})
 =\inf\{\gamma>0:T_{\rho,\theta}P\subseteq\gamma P
                 \text{ for some such }P\}.
\]
For $0\le\theta\le\pi$, Theorem~\ref{thm:karpelevic} gives the exact boundary radius
\[
 K_N(\theta)=\max\{r\ge0:re^{i\theta}\in\Theta_N\}.
\]
The next theorem expresses the optimal factor in terms of this radius.

\Needspace{20\baselineskip}
\begin{theorem}[Optimal contraction factor]\label{thm:gauges}
For $N\ge4$, $\rho>0$, and $0\le\theta\le\pi$,
\begin{equation}\label{eq:gauge-optimum}
 \Gamma_N(T_{\rho,\theta})=\frac{\rho}{K_N(\theta)}.
\end{equation}
The infimum is attained by a convex polygon $P$ with at most $N$ vertices and $0\in\operatorname{int}P$. For this polygon,
\[
 V_P(T_{\rho,\theta}x)\le
 \Gamma_N(T_{\rho,\theta})V_P(x)\qquad(x\in\R^2).
\]
As the angle varies, the smallest boundary radius and the largest optimal factor relative to $\rho$ are
\begin{equation}\label{eq:uniform-gauge}
 \min_{0\le\theta\le\pi}K_N(\theta)=\cos(\pi/N),\qquad
 \max_{0\le\theta\le\pi}
       \frac{\Gamma_N(T_{\rho,\theta})}{\rho}=\frac{1}{\cos(\pi/N)}.
\end{equation}
For angles between $0$ and $2\pi/N$, the boundary radius is
\begin{equation}\label{eq:first-arc}
 K_N(\theta)=\frac{\cos(\pi/N)}{\cos(\theta-\pi/N)}
 \qquad(0\le\theta\le2\pi/N).
\end{equation}
Both extrema in \eqref{eq:uniform-gauge} are attained at $\theta=\pi/N$.
\end{theorem}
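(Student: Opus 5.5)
The plan is to translate the inclusion $T_{\rho,\theta}P\subseteq\gamma P$ into invariance of $P$ under multiplication by the complex number $\lambda=(\rho/\gamma)e^{i\theta}$. We identify $\mathbb R^2$ with $\C$, so that $T_{\rho,\theta}$ becomes multiplication by $\rho e^{i\theta}$. The inclusion is then equivalent to $\lambda P\subseteq P$. By the invariant-polygon criterion \eqref{eq:polygon-criterion}, this holds for some polygon with at most $N$ vertices exactly when $\lambda\in\Theta_N$. Conversely, every polygon furnished by the criterion for a nonreal $\lambda$ with $0<|\lambda|\le1$ has $0\in\operatorname{int}P$, as shown in Section~2. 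So an admissible $\gamma$ exists precisely when $(\rho/\gamma)e^{i\theta}\in\Theta_N$.

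Since $\Theta_N$ is star-shaped about zero and $K_N(\theta)$ is the radial maximum (Theorem~\ref{thm:karpelevic}), this condition is equivalent to $\rho/\gamma\le K_N(\theta)$. The infimum is therefore $\rho/K_N(\theta)$, and it is attained by a polygon invariant under $K_N(\theta)e^{i\theta}$. The gauge inequality $V_P(Tx)\le\gamma V_P(x)$ then follows from the equivalence stated before the theorem.

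The real angles $\theta\in\{0,\pi\}$ need separate care, because invariance under a real $\lambda$ can come from a segment, which is not an admissible polygon. For $\theta=0$ we have $K_N=1$, and any polygon with $0$ in its interior satisfies $\rho P\subseteq\rho P$, so the infimum $\rho$ is attained; no smaller $\gamma$ works, since $\lambda=\rho/\gamma>1$ is not in $\Theta_N$. For $\theta=\pi$, $K_N(\pi)=1$ because $-1\in\Theta_N$, and a centrally symmetric polygon such as a square attains $\gamma=\rho$. More generally, when the root realizing $K_N(\theta)$ lies on the unit circle, I would use a regular $\ell$-gon with $\ell\le N$ centered at zero, which is invariant under rotation by $2\pi k/\ell$. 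For nonreal boundary points of modulus less than one, the criterion already gives zero in the interior.

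For \eqref{eq:first-arc}, take $n=N$ and the first Farey interval $0<x<1/N$, with $p/q=0/1$, $r/s=1/N$, $m=N$, $A=\theta$ and $B=(2\pi-N\theta)/N=2\pi/N-\theta$. Then $A+B=2\pi/N$ and $s/m=1$, so \eqref{eq:radial} becomes $\rho(\sin\theta+\sin(2\pi/N-\theta))=\sin(2\pi/N)$. Applying sum-to-product gives $2\rho\sin(\pi/N)\cos(\theta-\pi/N)=2\sin(\pi/N)\cos(\pi/N)$, which is \eqref{eq:first-arc}. The endpoints match the value $1$.

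The main obstacle is the global minimum $\min_\theta K_N(\theta)=\cos(\pi/N)$. On the first arc the minimum $\cos(\pi/N)$ occurs at $\theta=\pi/N$, so it remains to show $K_N(\theta)\ge\cos(\pi/N)$ for all other angles. I would first try a geometric argument. The regular $N$-gon $P$ with vertices $e^{2\pi ik/N}$ satisfies $\lambda P\subseteq P$ whenever $|\lambda|\le\cos(\pi/N)$, because its inradius is $\cos(\pi/N)$ and $|\lambda|P$ lies inside the inscribed disk for every rotation. Hence the closed disk of radius $\cos(\pi/N)$ lies in $\Theta_N$, giving $K_N\ge\cos(\pi/N)$ everywhere. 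The minimum is then attained at $\pi/N$, and the maximum ratio $\sec(\pi/N)$ follows from \eqref{eq:gauge-optimum}.
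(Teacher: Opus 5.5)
Your proposal is correct and follows essentially the same route as the paper. You reduce the inclusion to invariance under $(\rho/\gamma)e^{i\theta}$ via the polygon criterion, and you attain the bound with the criterion's polygon, or with a regular polygon or a square in the unit-circle and real cases. You also evaluate \eqref{eq:radial} on the first Farey interval $(0,1/N)$ and bound $K_N$ below using the regular $N$-gon with vertices at the $N$th roots of unity. The one minor difference is in that last step: you show directly that each $\lambda$ with $|\lambda|\le\cos(\pi/N)$ maps this $N$-gon into its inscribed disk, whereas the paper first proves the stronger inclusion that the whole $N$-gon lies in $\Theta_N$ (multiplying by a vertex permutes the roots) and then uses the inscribed disk.
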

\begin{proof}
Suppose an allowed polygon $P$ satisfies $\rho R_\theta P\subseteq\gamma P$. Dividing this inclusion by $\gamma$ gives
\[
 (\rho/\gamma)R_\theta P\subseteq P.
\]
Identifying $\mathbb R^2$ with $\mathbb C$, the map on the left is multiplication by $(\rho/\gamma)e^{i\theta}$. The invariant-polygon criterion \eqref{eq:polygon-criterion} therefore gives $(\rho/\gamma)e^{i\theta}\in\Theta_N$, and hence
\[
 \rho/\gamma\le K_N(\theta),\qquad
 \gamma\ge\frac{\rho}{K_N(\theta)}.
\]
Taking the infimum over all allowed polygons and factors proves the lower bound in \eqref{eq:gauge-optimum}.

For the reverse bound, we construct an allowed polygon invariant under multiplication by the boundary point $\lambda=K_N(\theta)e^{i\theta}$. If $\lambda$ is nonreal and $|\lambda|<1$, the criterion supplies a polygon with at most $N$ vertices. The observation following \eqref{eq:polygon-criterion} shows that it has zero in its interior. If $\lambda$ is nonreal and $|\lambda|=1$, the unit-circle classification makes it a root of unity of order $d\le N$. Take
\[
 P=\operatorname{conv}\{1,\lambda,\ldots,\lambda^{d-1}\}.
\]
This is a regular $d$-gon with zero in its interior, and $\lambda P=P$. Finally, at $\theta=0$ or $\theta=\pi$, we have $K_N(\theta)=1$; a square centered at zero is invariant under multiplication by $1$ or $-1$ and has four vertices, within the allowed bound.
In every case,
\[
 K_N(\theta)R_\theta P\subseteq P,
 \qquad
 T_{\rho,\theta}P\subseteq\frac{\rho}{K_N(\theta)}P.
\]
Thus the lower bound is attained. By the equivalence established above between this inclusion and the inequality for $V_P$, the same polygon satisfies the stated size bound.

To bound the radius for every angle, let $Q_N$ be the regular polygon whose vertices are the $N$th roots of unity. If $z\in Q_N$, multiplying a convex combination representing $z$ by any vertex of $Q_N$ merely permutes these roots. Thus $z$ times each vertex lies in $Q_N$, and convexity gives $zQ_N\subseteq Q_N$. The polygon criterion yields $Q_N\subseteq\Theta_N$. Since $Q_N$ contains the disk centered at zero with radius $\cos(\pi/N)$,
\[
 K_N(\theta)\ge\cos(\pi/N)\qquad(0\le\theta\le\pi).
\]

For $0<\theta<2\pi/N$, the corresponding Farey interval is from $0/1$ to $1/N$. Equation~\eqref{eq:radial} has $q=1$, $s=m=N$, $A=\theta$, and $B=2\pi/N-\theta$, so it becomes
\[
 K_N(\theta)\bigl(\sin\theta+\sin(2\pi/N-\theta)\bigr)
 =\sin(2\pi/N).
\]
The identity for a sum of sines gives
\[
 K_N(\theta)=
 \frac{\sin(2\pi/N)}{\sin\theta+\sin(2\pi/N-\theta)}
 =\frac{\cos(\pi/N)}{\cos(\theta-\pi/N)}.
\]
Continuity extends this formula to both endpoints. At $\theta=\pi/N$, the radius equals $\cos(\pi/N)$, so the bound is attained. Since $\Gamma_N(T_{\rho,\theta})/\rho=1/K_N(\theta)$, the largest optimal factor relative to $\rho$ is $1/\cos(\pi/N)$, attained at the same angle.
\end{proof}

A planar polygon has as many sides as vertices. If it has $L\le N$ sides and zero in its interior, write its defining inequalities as $\ell_j(x)\le1$, where each $\ell_j$ is linear. Then
\[
 V_P(x)=\max_{1\le j\le L}\ell_j(x).
\]
Thus the vertex bound also limits the number of linear functions needed to represent $V_P$. By the theorem, such a polygon can be chosen to give a contraction factor smaller than one exactly when
\[
 \rho<K_N(\theta).
\]
If the polygon may depend on the angle, strict contraction with at most $N$ vertices is possible for every angle exactly when $\rho<\cos(\pi/N)$.
The assumption $N\ge4$ is needed in the negative real direction. Although $-1\in\Theta_3$, the inclusion $-P\subseteq P$ also gives $P\subseteq-P$, so $P=-P$. Its vertices then occur in opposite pairs, which is impossible for a triangle with zero in its interior.

\section{Farey asymptotics and vertex complexity}
\label{sec:asymptotic}

The formulas above give the boundary radius and optimal contraction factor at each order. We now estimate their behavior as $N\to\infty$ and determine how many polygon vertices are needed for a given accuracy. These estimates follow from the completed boundary theorem.

\DJ okovi\'c used polygons generated by powers of a complex number to simplify the description of the stochastic eigenvalue region \cite{Djokovic}. Dubuc and Malik connected these convex hulls with Farey sequences and trinomial equations, that is, polynomial equations with three nonzero terms \cite{DubucMalik}. Dubuc and Zaoui studied the fractal dimension of unions of the resulting arcs \cite{DubucZaoui}. For points $z$ on a trinomial Farey arc associated with $p/q<r/s$ and $s>q>2$, Dubuc and Malik prove the lower bound
\[
 |z|^s\ge1-\frac{\pi^2}{qs}
\]
\cite[Theorem~13.1]{DubucMalik}. Kirkland, Laffey, and \v{S}migoc later characterized the boundary at a fixed argument \cite{KirklandLaffeySmigoc}. Here the matrix order $N$ varies. The resulting equation gives a leading term for the gap $1-K_N(\theta)$ with relative error $O(N^{-2})$: the error divided by the leading term is $O(N^{-2})$. This term depends on both endpoint denominators and the argument's position between them, and the error bound remains valid arbitrarily close to either endpoint. The estimate refines the lower bound in the trinomial case and also applies to products with several factors.

Let $N\ge4$ and suppose $x=\theta/(2\pi)$ lies strictly between consecutive fractions $f<g$ in $F_N\cap[0,1/2]$. If $f$ has the smaller denominator, take $y=x$ and keep the interval $(f,g)$. Otherwise reflect it to $(1-g,1-f)$ and take $y=1-x$. Write the chosen interval as $(p/q,r/s)$, so its lower endpoint $p/q$ has the smaller denominator:
\[
 \frac pq<y<\frac rs,\qquad q<s,\qquad rq-ps=1.
\]
Set $m=\lfloor N/q\rfloor$, the number of factors in the product. Since $r/s-p/q=1/(qs)$, define
\[
 t=\frac{y-p/q}{r/s-p/q}=s(qy-p)\in(0,1).
\]
Thus $t$ measures the distance from $p/q$ to $y$ as a proportion of the interval length, and
\[
 y-\frac pq=\frac{t}{qs},\qquad
 \frac rs-y=\frac{1-t}{qs}.
\]
We allow $t$ to approach $0$ or $1$ with no restriction on its rate.

\begin{theorem}[Uniform asymptotic estimate]\label{thm:farey-asymptotic}
For the interval data defined above,
\begin{equation}\label{eq:uniform-farey-asymptotic}
 1-K_N(\theta)=
 \frac{2\pi^2t(1-t)}{qs}
 \left(\frac{t}{s}+\frac{1-t}{mq}\right)
 \bigl(1+O(N^{-2})\bigr)
 \qquad(N\to\infty).
\end{equation}
The constant in $O(N^{-2})$ is absolute: the same relative error bound holds for every interval and every $0<t<1$, even arbitrarily close to either endpoint.

In particular, fix an irrational $x\in(0,1/2)$ and apply the preceding setup at $\theta=2\pi x$ for each $N$. Write the chosen interval as $p_N/q_N<y_N<r_N/s_N$, where $y_N=x$ or $1-x$, and put
\[
 m_N=\lfloor N/q_N\rfloor,\qquad
 t_N=s_N(q_Ny_N-p_N).
\]
Then, as $N\to\infty$,
\[
 1-K_N(2\pi x)\sim
 \frac{2\pi^2t_N(1-t_N)}{q_Ns_N}
 \left(\frac{t_N}{s_N}+\frac{1-t_N}{m_Nq_N}\right).
\]
The ratio of the two sides tends to one without any lower bound on $t_N$ or $1-t_N$.
\end{theorem}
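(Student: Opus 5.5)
The plan is to rewrite the scalar equation \eqref{eq:radial} as an exact identity between two positive quantities. Both sides are then expanded with relative errors that are controlled uniformly in $t$. By Theorem~\ref{thm:karpelevic}, or equivalently by Definition~\ref{def:candidate} together with the completion of that theorem, $\rho=K_N(\theta)$ is the root of \eqref{eq:radial} with the data of \eqref{eq:angle-budget}:
\[
 A=\frac{2\pi t}{s},\qquad B=\frac{2\pi(1-t)}{mq}.
\]
Subtracting \eqref{eq:radial} from $\sin A+\sin B$ and using the product identity already recorded after Definition~\ref{def:candidate} gives
\[
 4\sin\tfrac A2\sin\tfrac B2\sin\tfrac{A+B}2
 =\sin A\,\bigl(1-\rho^{s/m}\bigr)+\sin B\,\bigl(1-\rho^{q}\bigr).
\]
Write $\rho=e^{-\varepsilon}$ with $\varepsilon>0$. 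Both sides of this identity are sums or products of positive terms, so a relative error bound for each term transfers directly to the whole expression.

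\emph{Step 1: uniform smallness of the parameters.} Farey adjacency gives $q<s$ and $q+s>N$, so $s>N/2$. Also $mq>N/2$: if $q\le N/2$, then $mq\ge N-q+1>N/2$, and otherwise $m=1$ and $q>N/2$. Hence $A\le 2\pi/s$ and $B\le 2\pi/(mq)$ are both $O(1/N)$. Since $\sin x=x(1+O(x^2))$, the left side equals
\[
 \tfrac12 AB(A+B)\bigl(1+O(N^{-2})\bigr),
\]
and similarly $\sin A=A(1+O(N^{-2}))$ and $\sin B=B(1+O(N^{-2}))$. All constants here are absolute and independent of $t$.

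\emph{Step 2: the exponential terms, by bootstrapping.} We use $1-e^{-x}=x(1+O(x))$ with $x=\varepsilon s/m$ and $x=\varepsilon q$. Theorem~\ref{thm:gauges} gives the a priori bound $\rho\ge\cos(\pi/N)$, so $\varepsilon=O(N^{-2})$. Since $s/m\le N$ and $q\le N$, both exponents are $O(1/N)$. Inserting this into the identity and using
\[
 A\frac sm+Bq=\frac{2\pi}{m}
\]
yields
\[
 \varepsilon=\frac{m}{4\pi}AB(A+B)\bigl(1+O(N^{-1})\bigr)
 =\frac{2\pi^2t(1-t)}{qs}\Bigl(\frac ts+\frac{1-t}{mq}\Bigr)\bigl(1+O(N^{-1})\bigr).
\]
This improved bound gives
\[
 \varepsilon\frac sm\lesssim\frac{1}{qm}\cdot\frac1N=O(N^{-2}),
 \qquad
 \varepsilon q\lesssim\frac1s\cdot\frac1N=O(N^{-2}),
\]
where we used $t/s+(1-t)/(mq)=O(1/N)$ together with $mq,s>N/2$. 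Rerunning the expansion with these bounds upgrades the relative error to $O(N^{-2})$. Finally, $1-\rho=1-e^{-\varepsilon}=\varepsilon(1+O(\varepsilon))$, and $\varepsilon=O(N^{-2})$, which proves \eqref{eq:uniform-farey-asymptotic} with an absolute constant.

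\emph{Step 3: the irrational corollary.} For fixed irrational $x$ and each $N$, $x\notin F_N$, so the setup applies with data $(p_N,q_N,r_N,s_N,m_N,t_N)$. The error bound in \eqref{eq:uniform-farey-asymptotic} does not depend on the interval or on $t_N$. Letting $N\to\infty$ therefore shows that the ratio of the two sides tends to one, with no constraint on $t_N$.

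The main obstacle is the uniformity near the endpoints $t\to0$ or $t\to1$. There the leading term degenerates, so an additive error would swamp it. The argument avoids this in two ways. Every expansion is kept multiplicative, term by term, in the factored identity above, so that the factors $t$ and $1-t$ appear exactly through $A$ and $B$. In addition, the bootstrap in Step~2 uses only the upper bounds $A\lesssim 1/s$ and $B\lesssim 1/(mq)$, which hold regardless of how small $t$ or $1-t$ is.
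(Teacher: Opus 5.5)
Your proposal is correct and follows essentially the paper's argument: the same identity comparing \eqref{eq:radial} with its value at $\rho=1$ through $4\sin(A/2)\sin(B/2)\sin((A+B)/2)$, the same a priori bound $K_N(\theta)\ge\cos(\pi/N)$ from Theorem~\ref{thm:gauges}, and the same two-pass bootstrap from relative error $O(N^{-1})$ to $O(N^{-2})$ using $s,mq>N/2$. The only difference is cosmetic: you linearize each term through $\rho=e^{-\varepsilon}$ and $1-e^{-x}=x(1+O(x))$, whereas the paper applies the mean value theorem to $H(u)=u^{s/m}\sin A+u^q\sin B$ and compares $H'(\xi)$ with $H'(1)$.
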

\begin{proof}
Since the endpoint fractions are consecutive in $F_N$, we have $q+s>N$. As $q<s$, this gives $s>N/2$. Also, $m=\lfloor N/q\rfloor\ge1$ gives
\[
 N<(m+1)q\le2mq,
\]
so $mq>N/2$. Set
\[
 A=\frac{2\pi t}{s},\qquad
 B=\frac{2\pi(1-t)}{mq},\qquad
 H(u)=u^{s/m}\sin A+u^q\sin B\quad(u\ge0).
\]
Thus $A$, $B$, and $A+B$ are $O(N^{-1})$, uniformly over the interval and $t$. Write $K=K_N(\theta)$. Equation~\eqref{eq:radial} is $H(K)=\sin(A+B)$, so we can estimate $1-K$ by comparing $H(K)$ with $H(1)$.
The excess of $H(1)$ over the target value factors as
\begin{equation}\label{eq:sine-defect}
 \begin{split}
 D:=H(1)-\sin(A+B)
 &=4\sin(A/2)\sin(B/2)\sin((A+B)/2)\\
 &=\frac{AB(A+B)}2\bigl(1+O(N^{-2})\bigr).
 \end{split}
\end{equation}
Indeed, each ratio of a sine to its positive argument is $1+O(N^{-2})$. These estimates remain uniform as the arguments approach zero, so the error is relative even when $t$ approaches $0$ or $1$.
Similarly,
\begin{equation}\label{eq:farey-derivative}
 \begin{split}
 H'(1)&=\frac{s}{m}\sin A+q\sin B\\
 &=\frac{2\pi}{m}
   \left(t\frac{\sin A}{A}+(1-t)\frac{\sin B}{B}\right)
 =\frac{2\pi}{m}\bigl(1+O(N^{-2})\bigr).
 \end{split}
\end{equation}
The coefficients $t$ and $1-t$ are positive and sum to one, so this estimate also stays uniform when either tends to zero.

By the mean-value theorem,
\[
 D=H(1)-H(K)=(1-K)H'(\xi),\qquad K<\xi<1.
\]
To obtain the required relative error for $1-K$, we must replace $H'(\xi)$ by $H'(1)$ with relative error $O(N^{-2})$. The bound from Theorem~\ref{thm:gauges},
\[
 K\ge\cos(\pi/N),
\]
gives $1-K=O(N^{-2})$ and hence $|\log\xi|=O(N^{-2})$. Since $q,s/m\le N$, the powers $\xi^{q-1}$ and $\xi^{s/m-1}$ in $H'(\xi)$ differ from one by $O(N^{-1})$. Both derivative terms have positive coefficients, so
\[
 \frac{H'(\xi)}{H'(1)}=1+O(N^{-1}).
\]
This first estimate is enough to improve the bound on $1-K$. Using $D=O(N^{-3})$, \eqref{eq:farey-derivative}, and $m\le N/q$ in the mean-value identity gives
\[
 1-K=O(mN^{-3})=O\bigl((qN^2)^{-1}\bigr).
\]
Consequently $|\log\xi|=O((qN^2)^{-1})$. Moreover, $N<(m+1)q$ gives the stronger exponent bound
\[
 s/m\le N/m<2q.
\]
Both powers in the derivative now differ from one by $O(N^{-2})$. Their coefficients are positive, so
\[
 \frac{H'(\xi)}{H'(1)}=1+O(N^{-2}).
\]
Combining this estimate with the mean-value identity and \eqref{eq:sine-defect}--\eqref{eq:farey-derivative} yields
\[
 1-K=\frac{mAB(A+B)}{4\pi}\bigl(1+O(N^{-2})\bigr).
\]
Substituting the expressions for $A$ and $B$ gives \eqref{eq:uniform-farey-asymptotic}. Every constant is independent of the interval and $t$, so the estimate also applies to the successive intervals of any fixed irrational angle.
\end{proof}

The relative error statement concerns only open Farey intervals: at an endpoint, both the gap $1-K_N(\theta)$ and the leading term are zero. For an irrational $x=\theta/(2\pi)$, both endpoint denominators and the position within the interval vary with $N$. For badly approximable $x$, the following corollary gives a simpler rate. Recall that $x$ is badly approximable if some $c_x>0$ satisfies
\[
 \left|x-\frac ab\right|\ge\frac{c_x}{b^2}
 \qquad\text{for every rational }a/b\text{ with }b\ge1.
\]
For positive sequences, we write $a_N\asymp_x b_N$ when $a_N/b_N$ is bounded above and below by positive constants depending only on $x$.

\begin{corollary}[Contraction loss and required vertex counts]
\label{cor:complexity-asymptotic}
Let $\rho>0$, $x\in[0,1/2]$, and $N\ge4$. If $x=a/b$ is rational in lowest terms, then $K_N(2\pi x)=1$ exactly when $b\le N$. If $x$ is irrational, then $K_N(2\pi x)<1$ at every finite order.
For badly approximable $x$, both the radius gap and the relative excess of the optimal factor over $\rho$ satisfy
\begin{equation}\label{eq:badly-approximable-loss}
 1-K_N(2\pi x)\asymp_x N^{-3},\qquad
 \frac{\Gamma_N(T_{\rho,2\pi x})}{\rho}-1\asymp_x N^{-3}
 \qquad(N\to\infty).
\end{equation}
For every $N\ge4$, the largest relative excess over all angles is
\begin{equation}\label{eq:uniform-relative-loss}
 \max_{0\le\theta\le\pi}
 \left(\frac{\Gamma_N(T_{\rho,\theta})}{\rho}-1\right)
 =\frac{1}{\cos(\pi/N)}-1\sim\frac{\pi^2}{2N^2}
 \qquad(N\to\infty).
\end{equation}
For $\varepsilon>0$, define $B_x(\varepsilon)$ to be the smallest integer $N\ge4$ for which the relative excess at angle $2\pi x$ is at most $\varepsilon$:
\[
 B_x(\varepsilon)=\min\left\{N\ge4:
   \frac{\Gamma_N(T_{\rho,2\pi x})}{\rho}-1\le\varepsilon\right\}.
\]
For badly approximable $x$,
\[
 B_x(\varepsilon)\asymp_x\varepsilon^{-1/3}
 \qquad(\varepsilon\downarrow0).
\]
If the polygon may depend on the angle but the same vertex bound must give this accuracy for every angle, the smallest such bound is
\begin{equation}\label{eq:uniform-vertex-budget}
 B_{\mathrm{unif}}(\varepsilon)
 =\max\left\{4,
   \left\lceil\frac{\pi}{\arccos((1+\varepsilon)^{-1})}\right\rceil\right\}
 \sim\frac{\pi}{\sqrt{2\varepsilon}}
 \qquad(\varepsilon\downarrow0).
\end{equation}
Both vertex bounds are independent of $\rho$.
\end{corollary}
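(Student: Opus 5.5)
The plan is to read every assertion off Theorem~\ref{thm:karpelevic}, Theorem~\ref{thm:gauges} and Theorem~\ref{thm:farey-asymptotic}. For the rational/irrational dichotomy I will use the unit-circle classification. A point $e^{2\pi ia/b}$ lies in $\Theta_N$ exactly when $b\le N$. Since $K_N$ is a maximum over a compact set, $K_N(2\pi x)=1$ holds iff $e^{2\pi ix}\in\Theta_N$. If $x$ is irrational, $e^{2\pi ix}$ is not a root of unity, so $K_N<1$.

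For badly approximable $x$ I will estimate the leading term in \eqref{eq:uniform-farey-asymptotic}. The reflection $y=1-x$ preserves bad approximability with the same constant, so I may assume the chosen interval is $p/q<y<r/s$.

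\emph{Step 1: pin down $t$.} We have $y-p/q=t/(qs)\ge c_x/q^2$, hence $t\ge c_x s/q\ge c_x$. Similarly $1-t\ge c_x q/s$.

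\emph{Step 2: pin down $q$.} From $t<1$ I get $s/q\le 1/c_x$. Together with $q<s$ and $s>N/2$, this gives $q\asymp_x s\asymp_x N$. Then $m=\lfloor N/q\rfloor$ is bounded by a constant depending on $x$, and $mq\asymp N$. Steps 1 and 2 give $t$ and $1-t$ bounded below by constants depending on $x$.

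\emph{Step 3: read off the rate.} The leading term is then $\asymp_x N^{-2}\cdot N^{-1}=N^{-3}$, so $1-K_N\asymp_x N^{-3}$. Since $\Gamma_N/\rho-1=1/K_N-1=(1-K_N)/K_N$ and $K_N\ge\cos(\pi/N)\to1$, the same rate holds for the relative excess.

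\emph{Step 4: finitely many $N$.} The $\asymp_x$ statements are asymptotic. For the finitely many small $N$, both sides are positive, because $x$ is irrational and $K_N<1$. So the constants can be adjusted to cover them.

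For \eqref{eq:uniform-relative-loss}, the identity $\Gamma_N/\rho=1/K_N$ and \eqref{eq:uniform-gauge} give the maximum $\sec(\pi/N)-1$. Its expansion is $\sim\pi^2/(2N^2)$.

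For $B_x(\varepsilon)$, note that $N\mapsto\Gamma_N/\rho-1$ is nonincreasing, by Theorem~\ref{thm:order-monotonicity}. So $B_x(\varepsilon)$ is the first $N$ with excess $\le\varepsilon$. The bounds $c_1N^{-3}\le\text{excess}\le c_2N^{-3}$ then give two implications:
\begin{itemize}
\item excess $\le\varepsilon$ holds once $N\ge(c_2/\varepsilon)^{1/3}$;
\item excess $\le\varepsilon$ forces $N\ge(c_1/\varepsilon)^{1/3}$.
\end{itemize}
Hence $B_x\asymp_x\varepsilon^{-1/3}$ for small $\varepsilon$.

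For the uniform vertex count, the maximal excess $\sec(\pi/N)-1$ is strictly decreasing in $N$. It is at most $\varepsilon$ iff $\cos(\pi/N)\ge(1+\varepsilon)^{-1}$, that is, iff $N\ge\pi/\arccos((1+\varepsilon)^{-1})$. Taking the least integer that is also $\ge4$ gives \eqref{eq:uniform-vertex-budget}. Since $\arccos(1-u)\sim\sqrt{2u}$ and $1-(1+\varepsilon)^{-1}\sim\varepsilon$, this is asymptotic to $\pi/\sqrt{2\varepsilon}$. Independence from $\rho$ is clear, because only $\Gamma_N/\rho$ enters.

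The main obstacle is Steps 1–2: showing that bad approximability keeps $t$ and $1-t$ away from zero and forces both denominators to be of order $N$. The subtle point is the lower bound on $q$, which needs $s/q\le 1/c_x$ obtained via $t<1$.
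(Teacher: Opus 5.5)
Your proposal is correct and follows the paper's proof essentially step for step. It uses the unit-circle classification for the rational/irrational dichotomy, and the bad-approximability inequalities at both Farey endpoints to get $c_x s<q<s$, $t\ge c_x$ and $1-t\ge c_x^2$. That makes the leading term in Theorem~\ref{thm:farey-asymptotic} of order $N^{-3}$, and $\Gamma_N/\rho=1/K_N$ together with \eqref{eq:uniform-gauge} gives the remaining claims. Your appeal to Theorem~\ref{thm:order-monotonicity} and your adjustment of constants for the finitely many small $N$ are not used in the paper. They are harmless, and the latter makes the lower bound on $B_x(\varepsilon)$ slightly more explicit.
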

\begin{proof}
Theorem~\ref{thm:karpelevic} says that the points of $\Theta_N$ on the unit circle are precisely the roots of unity of orders at most $N$. If $x=a/b$ is rational in lowest terms, the least positive integer $k$ for which $kx$ is an integer is $b$. Thus $e^{2\pi ix}$ has order $b$, and belongs to $\Theta_N$ exactly when $b\le N$. Since $K_N(2\pi x)$ is an attained maximum in the closed unit disk, it equals one exactly when $e^{2\pi ix}\in\Theta_N$. This proves the rational assertion. If $x$ is irrational, no positive integer $k$ makes $kx$ an integer, so $e^{2\pi ix}$ is not a root of unity and does not belong to $\Theta_N$. The attained maximum is therefore strictly less than one.

Suppose now that $x$ is badly approximable. Replacing $x$ by $1-x$ preserves the defining inequality with the same constant $c_x$. Applying it to the two endpoints of the chosen Farey interval gives
\[
 \frac{t}{qs}=y-\frac pq\ge\frac{c_x}{q^2},\qquad
 \frac{1-t}{qs}=\frac rs-y\ge\frac{c_x}{s^2}.
\]
Hence
\[
 t\ge c_xs/q,\qquad 1-t\ge c_xq/s.
\]
Since $q<s$ and $t<1$, we obtain $c_xs<q<s$. Thus $q$ and $s$ are comparable, with constants depending only on $x$. The same inequalities give $t\ge c_x$ and $1-t>c_x^2$, so neither endpoint distance, measured by $t$ or $1-t$, can tend to zero.
Together with $q+s>N$, $s\le N$, and $N/2<mq\le N$, this yields
\[
 q\asymp_x s\asymp_x N,\qquad mq\asymp N.
\]
In the leading term of \eqref{eq:uniform-farey-asymptotic}, $t(1-t)$ is bounded above and below by positive constants depending only on $x$, $qs\asymp_x N^2$, and
\[
 \frac{t}{s}+\frac{1-t}{mq}\asymp_x N^{-1}.
\]
This proves the first estimate in \eqref{eq:badly-approximable-loss}. The map $T_{\rho,2\pi x}$ multiplies Euclidean lengths by $\rho$. The difference between the optimal polygon factor and $\rho$, divided by $\rho$, is
\[
 \frac{\Gamma_N(T_{\rho,2\pi x})}{\rho}-1
 =\frac{1-K_N(2\pi x)}{K_N(2\pi x)}.
\]
Since $K_N(2\pi x)\to1$, this quantity has the same $N^{-3}$ bounds as $1-K_N(2\pi x)$, proving the second estimate.

Equation~\eqref{eq:uniform-gauge} gives the largest value of this quantity over all angles, as stated in \eqref{eq:uniform-relative-loss}. The expansion $1/\cos h-1\sim h^2/2$ as $h\to0$ gives its asymptotic.
To achieve accuracy $\varepsilon$ at angle $2\pi x$, we seek a convex polygon $P$ with at most $N$ vertices and zero in its interior such that
\[
 V_P(T_{\rho,2\pi x}v)\le(1+\varepsilon)\rho V_P(v)
 \qquad(v\in\mathbb R^2).
\]
By the definition and attainment of $\Gamma_N$, such a polygon exists exactly when $\Gamma_N(T_{\rho,2\pi x})/\rho-1\le\varepsilon$. The number $B_x(\varepsilon)$ is the smallest integer $N\ge4$ meeting this condition. For irrational $x$, the quantity on the left is positive at every finite order, so $B_x(\varepsilon)\to\infty$ as $\varepsilon\downarrow0$.
For badly approximable $x$, the upper $N^{-3}$ bound ensures the condition once $N$ is at least a sufficiently large constant multiple of $\varepsilon^{-1/3}$. The lower bound excludes values below a positive constant multiple of $\varepsilon^{-1/3}$. This proves the stated estimate for $B_x$.
For the same accuracy at every angle, we use the largest value over angles and solve
\[
 \frac{1}{\cos(\pi/N)}-1\le\varepsilon
 \quad\Longleftrightarrow\quad
 N\ge\frac{\pi}{\arccos((1+\varepsilon)^{-1})}.
\]
Taking the smallest integer $N\ge4$ gives \eqref{eq:uniform-vertex-budget}. The expansion $\arccos((1+\varepsilon)^{-1})\sim\sqrt{2\varepsilon}$ gives its asymptotic. Finally, $\Gamma_N(T_{\rho,\theta})/\rho=1/K_N(\theta)$ shows that neither vertex bound depends on $\rho$.
\end{proof}


\clearpage
\appendix
\section{Near equality and products with different exponents}
\label{app:comparisons}

Theorem~\ref{thm:convex-product} uses Jensen's inequality to bound the modulus, with equality when all coefficients $\beta_j$ agree. We now quantify this equality condition: a small nonnegative difference between the two sides of Jensen's inequality bounds how far the factor arguments, and hence the coefficients, can differ. We then allow factors with different exponents. Their arguments have different allowed ranges, so we optimize the product by choosing these arguments within their individual ranges while keeping their sum fixed.

\subsection{Bounds on differences between the coefficients}

\begin{proposition}\label{prop:quantitative-rigidity}
Assume the hypotheses of Theorem~\ref{thm:convex-product}, including both the product relation and the real argument identity, with $\omega=\rho e^{i\vartheta}$. Recall the factor arguments $u_j$ and the argument $M$ approached when a coefficient tends to one:
\[
 u_j=\operatorname{Arg}(\omega^q-\beta_j),\qquad
 M=\operatorname{Arg}(\omega^q-1).
\]
The real argument identity gives their average as
\[
 \bar u=\frac1m\sum_{j=1}^m u_j=A+B.
\]
Define
\[
 F(u)=\log\sin M-\log\sin(M-u)\qquad(0<u<M),
 \qquad J=\sum_{j=1}^mF(u_j)-mF(\bar u).
\]
Thus $J$ is the nonnegative difference between the two sides of Jensen's inequality. Define also the nonnegative difference in the theorem's bound on the modulus:
\[
 D_\rho=\sin(A+B)-\rho^{s/m}\sin A-\rho^q\sin B.
\]
Then
\begin{equation}\label{eq:jensen-gap}
 J\ge\frac12\sum_{j=1}^m(u_j-\bar u)^2,
 \qquad
 D_\rho=\rho^{s/m}\sin A\bigl(e^{J/m}-1\bigr).
\end{equation}
Consequently, the sum of squared deviations of the arguments from their average satisfies
\begin{equation}\label{eq:argument-variance-bound}
 \sum_{j=1}^m(u_j-\bar u)^2
 \le2m\log\left(1+\frac{D_\rho}{\rho^{s/m}\sin A}\right)
 \le\frac{2mD_\rho}{\rho^{s/m}\sin A}.
\end{equation}
The coefficients satisfy a corresponding bound. Put
\[
 \bar\beta=\frac1m\sum_{j=1}^m\beta_j,
 \qquad c_\rho=\frac{\rho^q\sin A}{(1+\rho^q)^2}>0.
\]
Then
\begin{equation}\label{eq:coefficient-variance-bound}
 \sum_{j=1}^m(\beta_j-\bar\beta)^2
 \le c_\rho^{-2}\sum_{j=1}^m(u_j-\bar u)^2.
\end{equation}
Let $\rho_*$ be the unique solution of \eqref{eq:radial} for these interval data. With the Farey endpoints, factor count and interior angle fixed, both displayed sums of squared deviations are $O(\rho_*-\rho)$ as $\rho\to\rho_*$ from below, provided the coefficients continue to satisfy the product relation and real argument identity. The constants are uniform as the angle varies over a compact subset of the open Farey interval and $\rho$ stays bounded away from zero.
\end{proposition}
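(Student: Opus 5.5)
The plan is to reuse the function from the proof of Theorem~\ref{thm:convex-product}. With $\mathcal D(u)=\sin(M-u)/\sin M$, the $F$ defined here is exactly $F=-\log\mathcal D$ on $(0,M)$. By \eqref{eq:one-factor-convexity} it satisfies $F''\ge1$ there.

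All $u_j$ lie in $[A,M)$, and $A>0$, so their average $\bar u$ lies in the same interval. Taylor's formula with integral remainder about $\bar u$ gives
\[
 F(u_j)\ge F(\bar u)+F'(\bar u)(u_j-\bar u)+\tfrac12(u_j-\bar u)^2 .
\]
Summing over $j$, the linear terms cancel because $\bar u$ is the average. This gives the first inequality in \eqref{eq:jensen-gap}.

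For the identity for $D_\rho$, I recall two facts from the proof of Theorem~\ref{thm:convex-product}:
\[
 \sum_jF(u_j)=-e\log\rho,\qquad
 \mathcal D(A+B)=\frac{\sin(A+B)-\rho^q\sin B}{\rho^q\sin A}.
\]
Since $\bar u=A+B$, we get $J=-e\log\rho+m\log\mathcal D(A+B)$. Using $q+e/m=s/m$, this becomes
\[
 e^{J/m}=\frac{\sin(A+B)-\rho^q\sin B}{\rho^{s/m}\sin A}.
\]
Rearranging gives $D_\rho=\rho^{s/m}\sin A\,(e^{J/m}-1)$. Solving for $J$ and combining with the first inequality yields \eqref{eq:argument-variance-bound}. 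The last step there is $\log(1+x)\le x$.

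For the coefficients, I would use the derivative in \eqref{eq:one-factor-convexity}. Since $|\omega^q-\beta|\le\rho^q+1$ for $\beta\in[0,1]$,
\[
 \frac{du}{d\beta}=\frac{\rho^q\sin A}{|\omega^q-\beta|^2}\ge c_\rho .
\]
So the inverse map $u\mapsto\beta(u)$, defined on $[A,M)$, is Lipschitz with constant $c_\rho^{-1}$. Because $\bar u\in[A,M)$, the value $\beta(\bar u)$ is defined. The mean minimizes a sum of squared deviations, so
\[
 \sum_j(\beta_j-\bar\beta)^2\le\sum_j\bigl(\beta(u_j)-\beta(\bar u)\bigr)^2\le c_\rho^{-2}\sum_j(u_j-\bar u)^2 .
\]
This proves \eqref{eq:coefficient-variance-bound}.

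For the asymptotic statement, write $H(\rho)=\rho^{s/m}\sin A+\rho^q\sin B$. Then $D_\rho=H(\rho_*)-H(\rho)$. By the mean value theorem, $0\le D_\rho\le\bigl((s/m)\sin A+q\sin B\bigr)(\rho_*-\rho)$ for $\rho\le\rho_*\le1$. Substituting into \eqref{eq:argument-variance-bound} and \eqref{eq:coefficient-variance-bound} gives both $O(\rho_*-\rho)$ bounds. The constants are controlled by lower bounds on $\rho^{s/m}\sin A$ and on $c_\rho\ge\rho^q\sin A/4$.

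The only delicate point is the uniformity of these constants. On a compact subset of the open Farey interval, $A=q\vartheta-2\pi p$ stays bounded away from $0$ and from $\pi$. Once $\rho$ is bounded below by some $\rho_0>0$, both $\rho^{s/m}\sin A$ and $c_\rho$ are bounded below, since $q,s,m$ are fixed. The upper factor $(s/m)\sin A+q\sin B$ is trivially bounded. I expect no real obstacle beyond tracking these bounds.
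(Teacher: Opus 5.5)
Your proposal is correct and follows the paper's proof essentially step for step: Taylor's formula with $F''\ge1$ gives the bound on the Jensen gap, the identities $\sum_jF(u_j)=-e\log\rho$ and the formula for $\mathcal D(A+B)$ give $e^{J/m}$, the lower bound $du/d\beta\ge c_\rho$ handles the coefficients, and $D_\rho=H(\rho_*)-H(\rho)$ gives the rate. There are two small deviations. First, you pass from arguments to coefficients through the Lipschitz inverse $\beta(u)$ and the fact that the mean minimizes squared deviations, whereas the paper uses $\sum_j(x_j-\bar x)^2=\frac1m\sum_{i<j}(x_i-x_j)^2$; both give the same constant. Second, your explicit bound $H'(\xi)\le(s/m)\sin A+q\sin B$ requires $s\ge m$. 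This holds for Farey data with $m=\lfloor n/q\rfloor$, but in the general setting of Theorem~\ref{thm:convex-product} you should instead bound $\xi^{s/m-1}\le\rho_0^{s/m-1}$ using $\xi\ge\rho\ge\rho_0$.
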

\begin{proof}
The real argument identity \eqref{eq:analytic-product-phase} in Theorem~\ref{thm:convex-product} is
\[
 (s-mq)\vartheta+\sum_{j=1}^m u_j=2\pi(r-mp).
\]
With $A=q\vartheta-2\pi p$ and $B=(2\pi r-s\vartheta)/m$, it gives
\[
 \sum_{j=1}^m u_j
 =2\pi(r-mp)-(s-mq)\vartheta=m(A+B)=m\bar u.
\]
Hence $\sum_j(u_j-\bar u)=0$.
All $u_j$ and their average lie in $(0,M)$, where \eqref{eq:one-factor-convexity} gives $F''\ge1$. Taylor's formula with integral remainder yields
\[
 \begin{aligned}
 F(u_j)-F(\bar u)-F'(\bar u)(u_j-\bar u)
 &=(u_j-\bar u)^2\int_0^1(1-t)
 F''\bigl(\bar u+t(u_j-\bar u)\bigr)\,dt\\
 &\ge\tfrac12(u_j-\bar u)^2.
 \end{aligned}
\]
The last inequality follows from $F''\ge1$ and $\int_0^1(1-t)\,dt=1/2$. Summing cancels the linear terms and proves $J\ge\tfrac12\sum_j(u_j-\bar u)^2$.

To connect $J$ with $D_\rho$, put
\[
 g_j=\frac{\omega^q-\beta_j}{1-\beta_j},\qquad
 c=\frac{1-\rho^q\cos A}{\rho^q\sin A}=-\cot M.
\]
Since $\omega^q=\rho^qe^{iA}$, direct substitution gives $\Re g_j+c\Im g_j=1$. Writing $g_j=|g_j|e^{iu_j}$ therefore gives $|g_j|\mathcal D(u_j)=1$, where
\[
 \mathcal D(u)=\cos u+c\sin u
 =\frac{\sin(M-u)}{\sin M}=e^{-F(u)}.
\]
Thus $F(u_j)=\log|g_j|$. Dividing the product relation in \eqref{eq:analytic-product-phase} by $\prod_j(1-\beta_j)$ gives $\omega^{s-mq}\prod_jg_j=1$. Taking logarithms of moduli yields
\[
 \sum_jF(u_j)=(mq-s)\log\rho.
\]
Using $\bar u=A+B$ in the definition of $J$ now gives $e^{J/m}=\rho^{q-s/m}\mathcal D(A+B)$. To evaluate this, substitute the displayed value of $c$:
\[
 \begin{aligned}
 \mathcal D(A+B)
 &=\cos(A+B)+\frac{1-\rho^q\cos A}{\rho^q\sin A}\sin(A+B)\\
 &=\frac{\sin(A+B)-\rho^q\sin B}{\rho^q\sin A}.
 \end{aligned}
\]
Here $\sin A\cos(A+B)-\cos A\sin(A+B)=-\sin B$. Consequently,
\[
 e^{J/m}=\frac{\sin(A+B)-\rho^q\sin B}{\rho^{s/m}\sin A}.
\]
Rearranging proves the identity for $D_\rho$ in \eqref{eq:jensen-gap} and gives
\[
 J=m\log\left(1+\frac{D_\rho}{\rho^{s/m}\sin A}\right).
\]
Together with the lower bound on $J$ and $\log(1+x)\le x$ for $x\ge0$, this proves \eqref{eq:argument-variance-bound}.

We have bounded the differences between the factor arguments. To bound the differences between the coefficients, we use how each argument changes with its coefficient. Define
\[
 u(\beta)=\operatorname{Arg}(\rho^qe^{iA}-\beta),\qquad0\le\beta<1,
\]
so that $u_j=u(\beta_j)$. Equation~\eqref{eq:one-factor-convexity} gives
\[
 \frac{du}{d\beta}
 =\frac{\rho^q\sin A}{|\rho^qe^{iA}-\beta|^2}
 \ge\frac{\rho^q\sin A}{(1+\rho^q)^2}=c_\rho>0,
\]
because $|\rho^qe^{iA}-\beta|\le1+\rho^q$. Thus changing a coefficient by $|\beta_i-\beta_j|$ changes its argument by at least $c_\rho|\beta_i-\beta_j|$. More precisely, the mean-value theorem gives
\[
 |u_i-u_j|\ge c_\rho|\beta_i-\beta_j|.
\]
For real numbers $x_1,\ldots,x_m$ with average $\bar x=m^{-1}\sum_jx_j$,
\[
 \sum_{j=1}^m(x_j-\bar x)^2
 =\frac1m\sum_{i<j}(x_i-x_j)^2.
\]
Using this identity for the coefficients and arguments gives
\[
 \begin{aligned}
 \sum_j(\beta_j-\bar\beta)^2
 &=\frac1m\sum_{i<j}(\beta_i-\beta_j)^2\\
 &\le\frac{c_\rho^{-2}}m\sum_{i<j}(u_i-u_j)^2
 =c_\rho^{-2}\sum_j(u_j-\bar u)^2,
 \end{aligned}
\]
which is \eqref{eq:coefficient-variance-bound}.

Finally, put $H(t)=t^{s/m}\sin A+t^q\sin B$. Since $H(\rho_*)=\sin(A+B)$,
\[
 D_\rho=H(\rho_*)-H(\rho)
 =H'(\rho_*)(\rho_*-\rho)+o(\rho_*-\rho).
\]
Thus $D_\rho=O(\rho_*-\rho)$. Substituting this into the two bounds on squared deviations proves the stated rate. On compact subsets of the open angle interval, with $\rho$ bounded away from zero, $\rho^{s/m}\sin A$ and $c_\rho$ have positive lower bounds and $H'$ is uniformly bounded. The same estimates therefore hold with uniform constants.
\end{proof}
These estimates require both the product relation and the argument identity as an equality of real numbers, not merely modulo $2\pi$. They control the factor arguments and coefficients in these relations. They give no bound on the distance of an arbitrary stochastic matrix from matrices realizing the boundary, even if one of its eigenvalues is close to the boundary.

\subsection{Products with different exponents}

The exponent may now differ from one factor to another. For factor $j$, the argument $u_j$ increases from $A_j$ towards $M_j$ as its coefficient increases from zero towards one. Write $v_j=M_j-u_j$ for the remaining difference, whose largest allowed value is $U_j=M_j-A_j$. For $z=\rho e^{i\theta}$ and the integer exponent $e$ in the product below, the real argument identity
\[
 e\theta+\sum_{j=1}^m u_j=2\pi L,\qquad L\in\mathbb Z,
\]
fixes the sum of these differences as $\sum_jv_j=\sum_jM_j+e\theta-2\pi L$. To maximize $\prod_j\sin v_j$ with this fixed sum, the optimal choice makes the $v_j$ equal except where that value would exceed an individual upper bound $U_j$; there we take $v_j=U_j$.

\begin{theorem}[Product bound with different exponents]\label{thm:unequal-product-bound}
Let $m\ge1$, let $q_1,\ldots,q_m$ be positive integers, and let $e\in\mathbb Z$. Write $z=\rho e^{i\theta}$ with $\rho>0$ and $\theta\in\mathbb R$. For $1\le j\le m$, assume
\[
 A_j=\operatorname{Arg}(z^{q_j})\in(0,\pi),\qquad0\le\beta_j<1,
\]
and define
\[
 M_j=\operatorname{Arg}(z^{q_j}-1),\qquad
 u_j=\operatorname{Arg}(z^{q_j}-\beta_j),\qquad
 v_j=M_j-u_j,\qquad U_j=M_j-A_j.
\]
Assume both the product relation and the following real argument identity, for an integer $L$:
\begin{equation}\label{eq:unequal-product-phase}
 \begin{gathered}
 z^e\prod_{j=1}^m(z^{q_j}-\beta_j)=\prod_{j=1}^m(1-\beta_j),\\
 e\theta+\sum_{j=1}^m u_j=2\pi L.
 \end{gathered}
\end{equation}
The differences $v_j$ satisfy $0<v_j\le U_j<\pi$, and their sum is fixed by the argument identity:
\[
 S:=\sum_{j=1}^mM_j+e\theta-2\pi L=\sum_{j=1}^m v_j,
 \qquad0<S\le\sum_{j=1}^mU_j.
\]
If $S<\sum_jU_j$, let $c\in(0,\max_jU_j)$ be the unique solution of
\[
 \sum_{j=1}^m\min(c,U_j)=S.
\]
If $S=\sum_jU_j$, set $c=\max_jU_j$. Put
\[
 v_j^*=\min(c,U_j).
\]
Thus each $v_j^*$ equals the common value $c$ unless its upper bound $U_j$ is smaller, in which case it equals $U_j$.
These values uniquely maximize $\prod_j\sin v_j$ among all choices satisfying $0<v_j\le U_j$ and $\sum_jv_j=S$. Consequently,
\begin{equation}\label{eq:unequal-sharp-product}
 \rho^e\prod_{j=1}^m\sin M_j
   =\prod_{j=1}^m\sin v_j
   \le\prod_{j=1}^m\sin v_j^*,
\end{equation}
with equality exactly when $v_j=v_j^*$ for every $j$. The logarithmic difference from this maximum also bounds the squared deviations from the maximizing values:
\begin{equation}\label{eq:unequal-quadratic-deficit}
 \log\frac{\prod_j\sin v_j^*}{\rho^e\prod_j\sin M_j}
       \ge\frac12\sum_{j=1}^m(v_j-v_j^*)^2.
\end{equation}
If $S/m\le\min_jU_j$, the common value $S/m$ lies within every allowed range, and the bound reduces to
\[
 \rho^e\prod_{j=1}^m\sin M_j\le\sin^m(S/m),
\]
with equality exactly when $v_1=\cdots=v_m=S/m$.
\end{theorem}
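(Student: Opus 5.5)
The plan mirrors the proof of Theorem~\ref{thm:convex-product}, now with one line per factor. For each $j$, write $w_j=z^{q_j}=\rho^{q_j}e^{iA_j}$ with $A_j\in(0,\pi)$, so $\Im w_j>0$. Subtracting $\beta\in[0,1)$ moves $w_j$ horizontally to the left. By the derivative formula in \eqref{eq:one-factor-convexity}, $u_j$ increases strictly from $A_j$ towards $M_j<\pi$. This gives $0<v_j\le U_j<\pi$, with $v_j=U_j$ exactly when $\beta_j=0$.

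Normalize $g_j=(w_j-\beta_j)/(1-\beta_j)$. Each $g_j$ lies on the line through $w_j$ and $1$, so as before $|g_j|\sin(M_j-u_j)=\sin M_j$, that is, $|g_j|=\sin M_j/\sin v_j$. Dividing the product relation by $\prod_j(1-\beta_j)$ and taking moduli gives $\rho^e\prod_j\sin M_j=\prod_j\sin v_j$, which is the equality in \eqref{eq:unequal-sharp-product}. Summing $v_j=M_j-u_j$ and inserting the real argument identity gives $\sum_jv_j=S$. The bounds $0<S\le\sum_jU_j$ then follow from the individual bounds on the $v_j$.

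The core is the optimization. The function $G(v)=-\log\sin v$ is strictly convex on $(0,\pi)$ with $G''=\csc^2v\ge1$. We minimize $\sum_jG(v_j)$ over the compact-after-closure polytope $\{0\le v_j\le U_j,\ \sum_j v_j=S\}$; boundary points with some $v_j=0$ give value $+\infty$ and can be excluded. The map $c\mapsto\sum_j\min(c,U_j)$ is continuous and strictly increasing on $[0,\max_jU_j]$, so $c$ exists and is unique.

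To prove that $v^*$ is the minimizer, I would verify a KKT/supporting-hyperplane condition at $v^*$ with multiplier $\lambda=G'(c)=-\cot c$. For coordinates with $v_j^*=c$ we have $G'(v_j^*)=\lambda$. For coordinates with $v_j^*=U_j<c$ we have $G'(U_j)<\lambda$, since $G'$ is increasing. Any feasible direction $d=v-v^*$ satisfies $\sum_jd_j=0$ and $d_j\le0$ wherever $v_j^*=U_j$. Hence
\[
\sum_j G'(v_j^*)d_j=\lambda\sum_jd_j+\sum_{v_j^*=U_j}\bigl(G'(U_j)-\lambda\bigr)d_j\ge0 .
\]

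Taylor's formula with integral remainder and $G''\ge1$ then give
\[
\sum_jG(v_j)-\sum_jG(v_j^*)\ge\sum_jG'(v_j^*)d_j+\tfrac12\sum_jd_j^2\ge\tfrac12\sum_j(v_j-v_j^*)^2 .
\]
This is exactly \eqref{eq:unequal-quadratic-deficit}, after using $\sum_jG(v_j)=-\log(\rho^e\prod_j\sin M_j)$. It yields the inequality, uniqueness, and the equality case at once.

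The case $S=\sum_jU_j$ forces $v=U=v^*$. If $S/m\le\min_jU_j$, then $c=S/m$ and every $v_j^*=S/m$, which gives the final display. I expect the main care point to be the sign bookkeeping in the first-order condition at capped coordinates, namely that $U_j<c$ and $d_j\le0$ there. The convexity estimate itself is routine.
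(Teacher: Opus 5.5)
Your proposal is correct and follows essentially the same route as the paper. Both use the normalization $g_j=(w_j-\beta_j)/(1-\beta_j)$ on the line through $1$ and $w_j$, which gives $|g_j|=\sin M_j/\sin v_j$. Both then apply Taylor's formula with integral remainder to $\sum_j\log\sin v_j$, and your multiplier $\lambda=-\cot c$ is exactly the paper's step of subtracting $\cot c\sum_j(v_j-v_j^*)=0$ and using that the cotangent decreases on the capped coordinates. The detour through the closed polytope and points with $v_j=0$ is not needed, since the direct Taylor comparison already gives global optimality and uniqueness.
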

\begin{proof}
Put $w_j=z^{q_j}$. Since $\Im w_j>0$,
\[
 \frac{d}{d\beta}\operatorname{Arg}(w_j-\beta)
 =\frac{\Im w_j}{|w_j-\beta|^2}>0.
\]
Thus the argument increases from $A_j$ towards $M_j$ as $\beta$ increases from zero towards one. For $0\le\beta_j<1$, this gives $A_j\le u_j<M_j$ and hence $0<v_j\le U_j<\pi$.

Define
\[
 g_j=\frac{w_j-\beta_j}{1-\beta_j},\qquad
 c_j=\frac{1-\Re w_j}{\Im w_j}=-\cot M_j.
\]
The coefficient $c_j$ may have either sign. Direct substitution gives
\[
 \Re g_j+c_j\Im g_j
 =\frac{\Re w_j-\beta_j+c_j\Im w_j}{1-\beta_j}=1.
\]
The denominator $1-\beta_j$ is positive, so $\operatorname{Arg}g_j=u_j$. Writing $g_j=|g_j|e^{iu_j}$ in this line equation gives
\[
 |g_j|=\frac1{\cos u_j+c_j\sin u_j}
 =\frac{\sin M_j}{\sin(M_j-u_j)}
 =\frac{\sin M_j}{\sin v_j}.
\]
Taking moduli in the product relation \eqref{eq:unequal-product-phase} gives $\rho^e\prod_j|g_j|=1$, and therefore
\[
 \rho^e\prod_j\sin M_j=\prod_j\sin v_j,
\]
which is the equality in \eqref{eq:unequal-sharp-product}. The real argument identity $e\theta+\sum_j u_j=2\pi L$ gives
\[
 \sum_jv_j=\sum_jM_j-\sum_ju_j
 =\sum_jM_j+e\theta-2\pi L=S.
\]
Together with the bounds on $v_j$, this proves $0<S\le\sum_jU_j$.

We now maximize the sine product over all $v=(v_1,\ldots,v_m)$ satisfying $0<v_j\le U_j$ and $\sum_jv_j=S$. If $S=\sum_jU_j$, these conditions force $v_j=U_j=v_j^*$ for every $j$, so both bounds and their equality conditions follow directly. Suppose $S<\sum_jU_j$. The function $c\mapsto\sum_j\min(c,U_j)$ increases continuously from zero to $\sum_jU_j$ and is strictly increasing for $0\le c<\max_jU_j$. There is therefore a unique $c$ as stated, and $v_j^*=\min(c,U_j)$ satisfies all the constraints.

Set $\Phi(v)=\sum_j\log\sin v_j$ and $\Delta_j=v_j-v_j^*$. Since $(\log\sin t)'=\cot t$ and $(\log\sin t)''=-1/\sin^2t$, Taylor's formula with integral remainder gives
\[
 \begin{aligned}
 \Phi(v)-\Phi(v^*)
 &=\sum_j\cot(v_j^*)\Delta_j
 -\sum_j\Delta_j^2\int_0^1
 \frac{1-t}{\sin^2(v_j^*+t\Delta_j)}\,dt\\
 &\le\sum_j\cot(v_j^*)\Delta_j
 -\frac12\sum_j\Delta_j^2.
 \end{aligned}
\]
Each segment $v_j^*+t\Delta_j$ lies in $(0,\pi)$. The inequality follows from $1/\sin^2t\ge1$ and $\int_0^1(1-t)\,dt=1/2$.
Because $v$ and $v^*$ both have sum $S$, $\sum_j\Delta_j=0$. Subtracting $\cot c\sum_j\Delta_j$ therefore leaves the linear term unchanged and gives
\[
 \begin{aligned}
 \sum_j\cot(v_j^*)\Delta_j
 &=\sum_j\bigl(\cot(v_j^*)-\cot c\bigr)\Delta_j\\
 &=\sum_{U_j<c}(\cot U_j-\cot c)(v_j-U_j)\le0.
 \end{aligned}
\]
Only indices with $U_j<c$ contribute, since otherwise $v_j^*=c$. For each contributing index, $\cot U_j-\cot c>0$ because cotangent decreases on $(0,\pi)$, whereas $v_j-U_j\le0$ by the upper bound. Consequently,
\[
 \Phi(v^*)-\Phi(v)\ge\frac12\sum_j(v_j-v_j^*)^2.
\]
This proves the unique maximum: every $v\ne v^*$ gives a strictly smaller value of $\Phi$, and hence of the sine product. For the original factor arguments, the product identity proved above turns the left-hand side into
\[
 \log\frac{\prod_j\sin v_j^*}{\rho^e\prod_j\sin M_j},
\]
proving \eqref{eq:unequal-quadratic-deficit}. Exponentiation gives the product bound and its equality condition. Finally, if $S/m\le\min_jU_j$, the choice $c=S/m$ gives $v_j^*=S/m$ for every $j$, yielding the last assertion.
\end{proof}

To recover the equal-exponent case of Theorem~\ref{thm:convex-product}, take $q_j=q$, $z=\omega$, $\theta=\vartheta$, $e=s-mq$ and $L=r-mp$. Then $M_j=M$, $U_j=M-A$, and the real argument identity gives
\[
 S=m(M-A-B).
\]
Since $B>0$, the common value $S/m=M-A-B$ is smaller than every upper bound $U_j=M-A$. Thus equality requires $v_j=M-A-B$, or equivalently $u_j=A+B$, for every $j$. The strict increase of $\operatorname{Arg}(\omega^q-\beta)$ with $\beta$ makes this equivalent to equal coefficients. With different exponents, it is the differences $M_j-u_j$ that share the common maximizing value $c$, except where an upper bound $U_j$ forces a smaller value. The theorem bounds the product under the stated product and real argument identities. To determine the largest eigenvalue modulus at a fixed argument and matrix order, one would also need a stochastic matrix of that order attaining the proposed bound and a proof that no matrix of that order has an eigenvalue at the same argument with a larger modulus.


\section{Matrices with independently varying parameters}\label{app:matrix-family}
Theorem~\ref{thm:sparse-realization} constructs a row-stochastic family $A(\boldsymbol\beta)$, with $\boldsymbol\beta=(\beta_0,\ldots,\beta_{m-1})\in[0,1]^m$, and gives its full characteristic polynomial. Its directed graph is illustrated in Figure~\ref{fig:realizing-graphs}. Here we count the positive entries, show when every vertex can be reached from every other, and determine when a matrix power has all entries positive. We then examine the choices $\beta_j=0$ and $\beta_j=1$ for all $j$. Finally, for the order-$n$ Farey data and $0\le\beta_j<1$, consider a root $\omega=\rho e^{i\vartheta}$ with $0<\rho<1$ and argument inside the chosen interval. If it satisfies the real argument identity \eqref{eq:analytic-product-phase}, we show that $\rho\le K_n(\theta)$, where $\theta=\min\{\vartheta,2\pi-\vartheta\}$. Its modulus reaches $K_n(\theta)$ exactly when all parameters $\beta_j$ are equal.

\begin{proposition}\label{prop:family-structure}
Let $A(\boldsymbol\beta)$ be the family constructed in Theorem~\ref{thm:sparse-realization}, with positive integers $m,q,s$ satisfying $s>(m-1)q$ and $q<s$. Put $n_0=\max\{mq,s\}$.
If $0<\beta_j<1$ for every $j=0,\ldots,m-1$, then $A(\boldsymbol\beta)$ is irreducible: every vertex can reach every other along a directed path whose arrow weights are positive. It has exactly $n_0+m$ positive entries: $m$ rows have two each, and the remaining $n_0-m$ rows have one each.
If also $\gcd(q,s)=1$, the matrix is primitive: some positive integer $k$ makes every entry of $A(\boldsymbol\beta)^k$ strictly positive.

For any $\boldsymbol\beta\in[0,1]^m$ and integer $n\ge n_0$, the block diagonal matrix
\[
 A(\boldsymbol\beta)\oplus I_{n-n_0}
 =\begin{pmatrix}
 A(\boldsymbol\beta)&0\\
 0&I_{n-n_0}
 \end{pmatrix}
\]
is row-stochastic of order $n$ and retains every eigenvalue of $A(\boldsymbol\beta)$. Here $I_{n-n_0}$ is the identity matrix of order $n-n_0$; when $n=n_0$, no block is added.
\end{proposition}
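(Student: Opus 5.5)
The plan is to read every claim directly off the directed graph built in the proof of Theorem~\ref{thm:sparse-realization}. That graph has $m$ blocks of $q$ vertices, joined by connecting arrows into one cycle of length $s$. When $s>mq$ the connection also passes through $s-mq$ added path vertices; when $s\le mq$ the last connecting arrow instead enters $v_{0,d}$ with $d=mq-s$. Since $q<s$, the degenerate case $m=1$, $s=q$ (where two arrows merge into one entry) is excluded. So each last vertex $v_{j,q-1}$ has two arrows with distinct destinations, and every other vertex has a single arrow of weight one.

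\emph{Counting entries.} For $0<\beta_j<1$, both arrows leaving $v_{j,q-1}$ have positive weight and land in different columns. These $m$ rows therefore each have exactly two positive entries, and each of the remaining $n_0-m$ rows has one. The total is $2m+(n_0-m)=n_0+m$.

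\emph{Irreducibility.} I would show that every vertex lies on the $s$-cycle or can reach it, and can be reached from it. The $s$-cycle contains each last vertex $v_{j,q-1}$, all added path vertices, and every block vertex except $v_{0,0},\dots,v_{0,d-1}$ when $d>0$. Each omitted vertex $v_{0,k}$ lies on block zero's $q$-cycle. That cycle is entered from $v_{0,q-1}$ through the $\beta_0$ arrow and exits through $v_{0,q-1}$ again. So the omitted vertex and the $s$-cycle reach one another, and strong connectivity follows.

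\emph{Primitivity.} The graph contains cycles of lengths $q$ and $s$ that share a vertex, for example $v_{0,q-1}$. The period of an irreducible matrix divides the length of every cycle, so it divides $\gcd(q,s)=1$. An irreducible aperiodic nonnegative matrix is primitive by the standard Perron--Frobenius fact. Alternatively, this can be argued directly: because $\gcd(q,s)=1$, every sufficiently large length $aq+bs$ with $a,b\ge0$ is realized by a closed walk at $v_{0,q-1}$. Combining such walks with paths to and from that vertex gives walks of every large length between any two vertices.

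\emph{Padding.} For the block-diagonal matrix $A(\boldsymbol\beta)\oplus I_{n-n_0}$, row sums are one and entries are nonnegative. Its characteristic polynomial is $\det(tI-A(\boldsymbol\beta))(t-1)^{n-n_0}$, so every eigenvalue is retained.

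The only delicate step is the connectivity bookkeeping in the case $d>0$, where part of block zero is omitted from the $s$-cycle; I expect it to go through as sketched above.
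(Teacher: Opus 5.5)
Your proposal is correct and follows essentially the same route as the paper: the entry count from the two distinct destinations at each last block vertex, strong connectivity via the $s$-cycle together with block zero's $q$-cycle, and the factorization $(t-1)^{n-n_0}\det(tI-A(\boldsymbol\beta))$ for the padding. The only variation is in primitivity, where you lead with the standard ``irreducible and aperiodic implies primitive'' fact; the paper instead builds walks directly through $v_{0,q-1}$, using the representation $L=aq+bs$ for every $L\ge(q-1)s$, which is the alternative you already sketch.
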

\begin{proof}
Suppose $0<\beta_j<1$ for every $j$, so both $\beta_j$ and $1-\beta_j$ are positive. From any vertex in a block, the arrows of weight one lead to its last vertex. The arrow of weight $\beta_j$ leads back to the first vertex, allowing every vertex of that block to be reached. The arrows of weight $1-\beta_j$ lead through all blocks in cyclic order, including any vertices inserted between the last block and block zero. Thus every vertex can reach every other by arrows of positive weight, proving irreducibility.

At the last vertex of each block, the two outgoing arrows have distinct destinations. For $m\ge2$, the arrow of weight $\beta_j$ stays in its block, while the other leads to a different block or an inserted vertex. For $m=1$, the inequality $q<s$ requires inserted vertices, so the destinations are again distinct. These $m$ rows therefore have two positive entries each. All other $n_0-m$ rows have one, giving $2m+(n_0-m)=n_0+m$ positive entries.

Now assume $\gcd(q,s)=1$. The vertex $v_{0,q-1}$ lies on both the cycle of length $q$ within block zero and the cycle of length $s$ through all blocks. Every integer $L\ge(q-1)s$ can be written as $L=aq+bs$ with nonnegative integers $a,b$. Indeed, coprimality allows a choice $b\in\{0,\ldots,q-1\}$ with $bs\equiv L\pmod q$; then $a=(L-bs)/q$ is an integer and is nonnegative because $bs\le(q-1)s\le L$.

For any vertices $u,v$, choose paths from $u$ to $v_{0,q-1}$ and from there to $v$, and let their total length be $\ell_{uv}$. Repeating the two cycles $a$ and $b$ times between these paths gives a walk from $u$ to $v$ of length $\ell_{uv}+aq+bs$; a walk may revisit vertices. Thus there is such a walk of every length $k\ge\ell_{uv}+(q-1)s$. Its arrow weights are positive, so their product contributes a positive term to $(A^k)_{uv}$. There are finitely many vertex pairs, so one common threshold works for all of them. Every sufficiently high power of $A$ therefore has all entries positive, proving primitivity.

For every parameter tuple in $[0,1]^m$, the construction gives a row-stochastic matrix $A$. Adding an identity block preserves nonnegative entries and row sums of one. Moreover,
\[
 \det\bigl(tI_n-(A\oplus I_{n-n_0})\bigr)
 =(t-1)^{n-n_0}\det(tI_{n_0}-A),
\]
so every eigenvalue of $A$ is retained, including when $n=n_0$ and no block is added.
\end{proof}

Let $p/q<r/s$ be consecutive fractions in $F_n$, with $q<s$, and put $m=\lfloor n/q\rfloor$. Farey adjacency gives $q+s>n$ and $rq-ps=1$. Since $mq\le n$ and $q,s\le n$,
\[
 s>n-q\ge(m-1)q,\qquad n_0=\max\{mq,s\}\le n.
\]
Thus the realization theorem applies and its matrices can be extended to order $n$. The identity $rq-ps=1$ implies $\gcd(q,s)=1$, so the preceding proposition also gives primitivity whenever every $\beta_j$ lies in $(0,1)$.

If all $\beta_j=\beta$ and $\alpha=1-\beta$, substituting into \eqref{eq:independent-characteristic} gives
\[
 \det(tI-A)=
 \begin{cases}
 (t^q-\beta)^m-\alpha^m t^{mq-s},&s\le mq,\\
 t^{s-mq}(t^q-\beta)^m-\alpha^m,&s>mq.
 \end{cases}
\]
For $\boldsymbol\beta=\mathbf1=(1,\ldots,1)$ and $\boldsymbol\beta=\mathbf0=(0,\ldots,0)$, respectively, this becomes
\[
 \begin{aligned}
 \det(tI-A(\mathbf1))&=t^{n_0-mq}(t^q-1)^m,\\
 \det(tI-A(\mathbf0))&=t^{n_0-s}(t^s-1).
 \end{aligned}
\]
The nonzero eigenvalues are therefore precisely the $q$th roots of unity for $A(\mathbf1)$ and the $s$th roots of unity for $A(\mathbf0)$. The powers of $t$ give the multiplicities of zero. Since the entries vary continuously with $\boldsymbol\beta$, these endpoint matrices belong to the same continuous family. The characteristic formula remains valid when some parameters are zero or one and others lie in $(0,1)$. The irreducibility assertion above requires every parameter to lie in $(0,1)$.

The following corollary identifies the parameter choices whose eigenvalue modulus reaches $K_n(\theta)$. For $0\le\beta_j<1$, the characteristic equation \eqref{eq:independent-characteristic} determines the sum of arguments only modulo $2\pi$. We therefore also require the real argument identity \eqref{eq:analytic-product-phase}, with the specific value $2\pi(r-mp)$ on its right-hand side.

\begin{corollary}\label{cor:family-equality}
Let $n\ge4$, and let $f<g$ be consecutive fractions in $F_n^+$. If the smaller denominator belongs to $f$, put $p/q=f$ and $r/s=g$. Otherwise put $p/q=1-g$ and $r/s=1-f$, corresponding to reflection of the arc in the real axis. In either case, $p/q<r/s$ and $q<s$.
Put $m=\lfloor n/q\rfloor$ and let
\[
 \boldsymbol\beta=(\beta_0,\ldots,\beta_{m-1})\in[0,1)^m.
\]
Let $\omega=\rho e^{i\vartheta}$, with $0<\rho<1$, be a root of the characteristic polynomial \eqref{eq:independent-characteristic}, with
\[
 \frac pq<\frac{\vartheta}{2\pi}<\frac rs.
\]
Define
\[
 A=q\vartheta-2\pi p,\qquad
 B=\frac{2\pi r-s\vartheta}{m},\qquad
 \theta=\min\{\vartheta,2\pi-\vartheta\},
\]
and use arguments in $(0,\pi)$ to set
\[
 M=\operatorname{Arg}(\omega^q-1),\qquad
 u_j=\operatorname{Arg}(\omega^q-\beta_j)\quad(0\le j<m).
\]
These satisfy $A\le u_j<M$. Assume the following identity holds as an equality of real numbers:
\[
 (s-mq)\vartheta+\sum_{j=0}^{m-1}u_j=2\pi(r-mp).
\]
Then
\[
 \rho\le K_n(\theta),
\]
and $\rho=K_n(\theta)$ if and only if $\beta_0=\cdots=\beta_{m-1}$. In that case, their common value is uniquely determined by
\[
 \beta_* =\frac{K_n(\theta)^q\sin B}{\sin(A+B)}.
\]
\end{corollary}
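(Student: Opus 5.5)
The plan is to reduce directly to Theorem~\ref{thm:convex-product}. First I will check that the characteristic polynomial \eqref{eq:independent-characteristic} vanishing at $\omega\ne0$ gives the product relation in \eqref{eq:analytic-product-phase}. Dividing $\det(\omega I-A(\boldsymbol\beta))=0$ by $\omega^{n_0-s}$ gives $\omega^{s-mq}\prod_j(\omega^q-\beta_j)=\prod_j(1-\beta_j)$, with $e=s-mq$ possibly negative. Next I will verify the remaining hypotheses of that theorem. Farey adjacency in $F_n$ gives $rq-ps=1$, and $q<s$ holds by construction. The bound $A+B<\pi$ follows from \eqref{eq:angle-budget}, since $n\ge4$ forces $s\ge3$ and $mq\ge2$. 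Since $0<A<2\pi/s<\pi$, the point $\omega^q$ lies in the upper half-plane. Hence the principal arguments $u_j$ lie in $[A,M)$ with $M<\pi$, exactly as in the last paragraph of the proof of Theorem~\ref{thm:product}. The real argument identity is assumed.

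Theorem~\ref{thm:convex-product} then gives inequality \eqref{eq:sharp-scalar-inequality}, with equality exactly when all $\beta_j$ agree. Monotonicity of its left side in $\rho$ gives $\rho\le\rho_*$, where $\rho_*$ is the root of \eqref{eq:radial} for these data. It remains to identify $\rho_*$ with $K_n(\theta)$. In the unreflected case, $\vartheta=2\pi x$ with $x\in(f,g)\subset[0,1/2]$. Then $\theta=\vartheta$, and Definition~\ref{def:candidate} uses exactly these $q,s,m,A,B$. In the reflected case, $\vartheta/(2\pi)\in(1-g,1-f)\subset[1/2,1]$. Then $\theta=2\pi-\vartheta$ corresponds to $x=1-y$ in $(f,g)$, and Definition~\ref{def:candidate} again selects the reflected data $(p/q,r/s,y)$ with $y=\vartheta/(2\pi)$. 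In both cases $\rho_*=K_n(\theta)$. The only care needed here is the bookkeeping of which endpoint carries the smaller denominator; I expect this to be the main, though mild, obstacle. The case $\vartheta/(2\pi)$ exactly $1/2$ cannot occur strictly inside an interval of $F_n$, since $1/2\in F_n$.

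For the equality statement, suppose first that all $\beta_j=\beta$. Then equality holds in \eqref{eq:sharp-scalar-inequality}, so $\rho=\rho_*$. Conversely, $\rho=K_n(\theta)$ forces equality in the inequality and hence equal coefficients. To determine the common value, I will use that equal coefficients give $u_j=\bar u=A+B$ for every $j$, by the identity computed in the proof of Theorem~\ref{thm:convex-product}. So $\omega^q-\beta$ is a positive multiple of $e^{i(A+B)}$. Taking the imaginary part of $(\omega^q-\beta)e^{-i(A+B)}=0$ gives $\rho^q\sin(A-(A+B))+\beta\sin(A+B)=0$. Hence $\beta=\rho^q\sin B/\sin(A+B)$, which agrees with \eqref{eq:equal-parameters} and \eqref{eq:factor-resolution}. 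With $\rho=K_n(\theta)$ this is the stated $\beta_*$, and it is unique because $u(\beta)$ is strictly increasing by \eqref{eq:one-factor-convexity}.
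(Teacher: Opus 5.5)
Your proposal is correct and follows the paper's proof essentially step for step. You divide the characteristic equation by $\omega^{n_0-s}$ to get the product relation, verify the hypotheses of Theorem~\ref{thm:convex-product} via \eqref{eq:angle-budget}, and read off the common $\beta$ from the factor argument $A+B$. Your explicit check that Definition~\ref{def:candidate} selects the same data in the reflected case, so that $\rho_*=K_n(\theta)$, spells out a step the paper only cites through \eqref{eq:radial}.
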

\begin{proof}
Since $\omega\ne0$, dividing the characteristic equation \eqref{eq:independent-characteristic} at $t=\omega$ by $\omega^{n_0-s}$ gives
\[
 \omega^{s-mq}\prod_{j=0}^{m-1}(\omega^q-\beta_j)
 =\prod_{j=0}^{m-1}(1-\beta_j).
\]
The angle identities \eqref{eq:angle-budget} give $A,B>0$ and $A+B<\pi$. Together with the assumed real argument identity, these verify the hypotheses of Theorem~\ref{thm:convex-product}. It yields
\[
 \rho^{s/m}\sin A+\rho^q\sin B\le\sin(A+B),
\]
with equality exactly when all $\beta_j$ agree. The left-hand side is strictly increasing in $\rho$ and equals the right-hand side at $\rho=K_n(\theta)$ by \eqref{eq:radial}. Hence $\rho\le K_n(\theta)$, with $\rho=K_n(\theta)$ exactly when all parameters agree.

In that case, write $\beta$ for their common value and $u$ for their common factor argument. The assumed real identity gives
\[
 u=\frac{2\pi(r-mp)-(s-mq)\vartheta}{m}=A+B.
\]
Put $R=|\omega^q-\beta|>0$, so $\omega^q-\beta=Re^{i(A+B)}$. Taking real and imaginary parts, as in \eqref{eq:factor-resolution}, gives
\[
 \begin{aligned}
 R&=\frac{\rho^q\sin A}{\sin(A+B)},\\
 \beta&=\rho^q\cos A-R\cos(A+B)
 =\frac{\rho^q\sin B}{\sin(A+B)}.
 \end{aligned}
\]
The last equality uses $\cos A\sin(A+B)-\sin A\cos(A+B)=\sin B$. Substituting $\rho=K_n(\theta)$ gives the unique value $\beta_*$. Conversely, \eqref{eq:equal-parameters}, \eqref{eq:factor-resolution} and \eqref{eq:ito-attaining}, evaluated at $\rho=K_n(\theta)$, give this tuple with coefficients in $(0,1)$ satisfying both the product relation and the real argument identity. Thus it attains the stated modulus.
\end{proof}

Let $\vartheta$ vary inside $(2\pi p/q,2\pi r/s)$, taking every parameter equal to $\beta_*(\vartheta)$. At the left endpoint, $K_n(\theta)\to1$, $A\to0$ and $B\to2\pi/(mq)$. At the right endpoint, $K_n(\theta)\to1$, $B\to0$ and $A\to2\pi/s$, as given by \eqref{eq:angle-budget}. Since $mq,s\ge3$, these nonzero limiting angles lie in $(0,\pi)$. Substitution into the formula for $\beta_*$ therefore gives
\[
 \beta_*(\vartheta)\longrightarrow
 \begin{cases}
 1,&\vartheta\downarrow2\pi p/q,\\
 0,&\vartheta\uparrow2\pi r/s.
 \end{cases}
\]
Every matrix entry depends continuously on the parameters, so the realizing matrices satisfy
\[
 A\bigl(\beta_*(\vartheta)\mathbf1\bigr)\longrightarrow
 \begin{cases}
 A(\mathbf1),&\vartheta\downarrow2\pi p/q,\\
 A(\mathbf0),&\vartheta\uparrow2\pi r/s.
 \end{cases}
\]


\section*{Funding}
Vincent Ginis acknowledges support from the Research Foundation--Flanders
(FWO) under grants G032822N and G0K9322N.

\section*{Use of generative artificial intelligence}
The manuscript was developed with OpenAI's ChatGPT 5.5 and Anthropic's Claude Fable 5 as research companions. These systems were used to explore proof strategies, alternative approaches, and connections between ideas. OpenAI's Codex 6.1 was used to rewrite, clarify, reorganize, and refine the exposition. Codex also assisted with diagrams and computational checks.



\end{document}